\documentclass[11pt,reqno]{amsart}

\usepackage[T1]{fontenc}
\usepackage{lmodern}
\usepackage{microtype}
\usepackage[a4paper,margin=30mm]{geometry}
\usepackage{amsmath,amssymb,mathtools}
\usepackage{tikz-cd}
\usetikzlibrary{arrows.meta,positioning}
\usepackage{enumitem}
\usepackage{float}
\usepackage[colorlinks=true,linkcolor=blue,citecolor=blue,urlcolor=blue]{hyperref}
\hypersetup{
  pdftitle={Counterexamples and gap phenomena for derived delooping levels},
  pdfauthor={Liang Chen},
  pdfsubject={Derived delooping levels and finitistic dimensions},
  pdfkeywords={derived delooping level, finitistic dimension, monomial algebra, one-point extension, syzygy, stable module category}
}
\usepackage{aliascnt}
\usepackage[nameinlink,noabbrev]{cleveref}

\numberwithin{equation}{section}

\newtheorem{theorem}{Theorem}[section]

\newaliascnt{proposition}{theorem}
\newtheorem{proposition}[proposition]{Proposition}
\aliascntresetthe{proposition}

\newaliascnt{lemma}{theorem}
\newtheorem{lemma}[lemma]{Lemma}
\aliascntresetthe{lemma}

\newaliascnt{corollary}{theorem}
\newtheorem{corollary}[corollary]{Corollary}
\aliascntresetthe{corollary}

\newaliascnt{question}{theorem}

\aliascntresetthe{question}

\theoremstyle{definition}
\newaliascnt{definition}{theorem}
\newtheorem{definition}[definition]{Definition}
\aliascntresetthe{definition}

\theoremstyle{remark}
\newaliascnt{remark}{theorem}
\newtheorem{remark}[remark]{Remark}
\aliascntresetthe{remark}

\crefname{theorem}{Theorem}{Theorems}
\Crefname{theorem}{Theorem}{Theorems}
\crefname{proposition}{Proposition}{Propositions}
\Crefname{proposition}{Proposition}{Propositions}
\crefname{lemma}{Lemma}{Lemmas}
\Crefname{lemma}{Lemma}{Lemmas}
\crefname{corollary}{Corollary}{Corollaries}
\Crefname{corollary}{Corollary}{Corollaries}
\crefname{question}{Question}{Questions}
\Crefname{question}{Question}{Questions}
\crefname{definition}{Definition}{Definitions}
\Crefname{definition}{Definition}{Definitions}
\crefname{remark}{Remark}{Remarks}
\Crefname{remark}{Remark}{Remarks}

\newcommand{\dell}{\operatorname{dell}}
\newcommand{\ddell}{\operatorname{ddell}}
\newcommand{\edell}{\operatorname{edell}}
\newcommand{\sddell}{\operatorname{sub-ddell}}
\newcommand{\Findim}{\operatorname{Findim}}
\newcommand{\findim}{\operatorname{findim}}
\newcommand{\pd}{\operatorname{pd}}
\newcommand{\rad}{\operatorname{rad}}
\newcommand{\soc}{\operatorname{soc}}
\newcommand{\add}{\operatorname{add}}
\newcommand{\Add}{\operatorname{Add}}
\newcommand{\op}{\mathrm{op}}
\newcommand{\st}{\mathrm{st}}
\newcommand{\Mod}{\operatorname{Mod}}
\newcommand{\modu}{\operatorname{mod}}

\newcommand{\Hom}{\operatorname{Hom}}
\newcommand{\End}{\operatorname{End}}
\newcommand{\midst}{\mathrel{\mid_{\st}}}

\title[Counterexamples for derived delooping levels]
{Counterexamples and gap phenomena for derived delooping levels}

\author{Liang Chen}
\address{School of Mathematical Sciences, Capital Normal University, Beijing 100048, China}
\email{2210501003@cnu.edu.cn}

\subjclass[2020]{16E05, 16E10, 16E30, 16G10}
\keywords{derived delooping level, finitistic dimension, monomial algebra, one-point extension, syzygy, stable module category}

\begin{document}

\begin{abstract}
Guo and Igusa introduced the derived and sub-derived delooping levels as refinements of the delooping level and asked how these two invariants compare, whether the derived delooping level always coincides with the opposite big finitistic dimension, and, if not, how large their difference can be.  We answer these questions.  First, we prove that every Artin algebra \(A\) satisfies
\[
   \ddell A\leq \sddell A.
\]
This gives a positive answer to the ordinary-degree comparison problem, whereas a single explicit eight-dimensional algebra \(C\) satisfies
\[
   k\text{-}\sddell C=0<1=k\text{-}\ddell C
   \qquad(k\geq2).
\]
We then compute, for every \(k\geq1\), all \(k\)-derived and \(k\)-sub-derived delooping levels of the monomial family of Barrios--Lanzilotta--Mata.  In particular, that family satisfies
\[
   k\text{-}\ddell A_{n,s}=\Findim(A_{n,s}^{\op})
   \qquad(k\geq1),
\]
so it does not provide the counterexample suggested in the later literature.  By contrast, we construct an eleven-dimensional monomial algebra \(\Lambda\), over an arbitrary field, such that
\[
   \Findim(\Lambda^{\op})=1<2=\ddell\Lambda=\dell\Lambda.
\]
A ten-dimensional non-monomial variant is also given.  Finally, we construct a family of finite-dimensional monomial algebras \(B_r\) satisfying
\[
   \Findim(B_r^{\op})=1,
   \qquad
   \ddell B_r=2r+1.
\]
Thus the difference \(\ddell A-\Findim(A^{\op})\) is unbounded even among monomial algebras with fixed opposite big finitistic dimension.
\end{abstract}

\maketitle

\section{Introduction}

Let \(A\) be a finite-dimensional algebra.  The delooping level \(\dell A\), introduced by G\'elinas \cite{Gelinas}, satisfies
\[
   \Findim(A^{\op})\leq \dell A.
\]
Guo and Igusa subsequently introduced the effective, sub-derived, and derived delooping levels and proved
\[
\begin{aligned}
   \Findim(A^{\op})&=\edell A
   \leq \ddell A\leq \dell A,\\
   \Findim(A^{\op})&\leq \sddell A\leq \dell A.
\end{aligned}
\]
In \cite[Question~5.1]{GuoIgusa}, they asked how \(\ddell\) and \(\sddell\) compare, whether \(\ddell A=\Findim(A^{\op})\) always holds, and, if not, whether the difference can be quantified.  In fact, immediately after Example~2.26 they explicitly conjectured that
\[
   \ddell\Lambda=\Findim(\Lambda^{\op})
   \qquad\text{for every }\Lambda;
\]
see \cite[p.~17]{GuoIgusa}.  The absence of a strict example was emphasized again in \cite[p.~1128]{GuoSymmetry}, where it was noted that no conclusive example violating this equality was then known.

The refinement is substantial.  Kershaw and Rickard constructed a finite-dimensional algebra with infinite ordinary delooping level \cite{KershawRickard}, whereas Guo and Igusa showed that the same algebra has derived delooping level $1$ \cite[Example~3.8]{GuoIgusa}.

Guo suggested that the Barrios--Lanzilotta--Mata example might indicate that the equality
\(\Findim\Lambda=\ddell(\Lambda^{\op})\) fails in general; see
\cite[Introduction]{GuoSerial}.  The broad suspicion is correct, but the indicated family is not a counterexample.  We compute the relevant invariants of the explicitly presented Barrios--Lanzilotta--Mata family and obtain equality throughout.  We then give genuinely strict examples.

The principal results of the paper are as follows.

\begin{enumerate}[label=\textup{(\Roman*)},leftmargin=2.7em]
\item If \(M\hookrightarrow N\) is an embedding of finitely generated modules over an Artin algebra, then
\[
   \ddell M\leq \dell N.
\]
Consequently, \(\ddell M\leq\sddell M\) for every finitely generated module \(M\), and hence
\[
   \ddell A\leq\sddell A.
\]
For each \(k\geq2\), however, a single eight-dimensional algebra \(C\) satisfies
\[
   k\text{-}\sddell C=0<1=k\text{-}\ddell C.
\]
Thus the proposed universal inequality \(k\text{-}\ddell A\leq k\text{-}\sddell A\) holds for \(k=1\) and fails for every \(k\geq2\), answering the universal-inequality part of \cite[Question~5.1(1)]{GuoIgusa}.  The quantitative comparison in \cref{prop:all-degree-comparison} gives a further relation in every degree.

\item For the monomial family \(A_{n,s}\) occurring in \cite[Example~4.22]{BLM}, with \(s\geq1\) and \(n\geq s+4\), we prove, for every \(k\geq1\),
\[
\begin{split}
   k\text{-}\ddell A_{n,s}
   &=\findim A_{n,s}^{\op}
     =\Findim A_{n,s}^{\op}
     =\Bigl\lfloor\frac{s}{2}\Bigr\rfloor+1,\\
   k\text{-}\sddell A_{n,s}
   &=k\text{-}\dell A_{n,s}
     =\Bigl\lceil\frac{n}{2}\Bigr\rceil.
\end{split}
\]
Consequently, for every \(k\geq1\),
\[
   k\text{-}\ddell A_{n,s}
   -\Findim(A_{n,s}^{\op})=0,
\]
whereas
\[
   k\text{-}\sddell A_{n,s}
   -k\text{-}\ddell A_{n,s}
   =\Bigl\lceil\frac{n}{2}\Bigr\rceil
    -\Bigl\lfloor\frac{s}{2}\Bigr\rfloor-1,
\]
which is unbounded as \(n\to\infty\) with \(s\) fixed.  Thus the family suggested in \cite{GuoSerial} does not furnish a counterexample to \(\ddell A=\Findim(A^{\op})\), although it exhibits an arbitrarily large separation between the sub-derived and derived delooping levels.

\item There exists an eleven-dimensional monomial algebra \(\Lambda\) over any field such that
\[
   \Findim(\Lambda^{\op})=1<2=\ddell\Lambda=\dell\Lambda.
\]
Equivalently, \(\Gamma=\Lambda^{\op}\) satisfies
\[
   \Findim\Gamma=1<2=\ddell(\Gamma^{\op}).
\]
This gives a negative answer to Guo--Igusa's equality question.  A ten-dimensional variant with a commutative corner algebra yields the same strict inequality but is no longer monomial.

\item For every \(r\geq1\), there is a finite-dimensional monomial algebra \(B_r\) with
\[
   \Findim(B_r^{\op})=1,
   \qquad
   \ddell B_r=2r+1.
\]
Hence
\[
   \sup_A\bigl(\ddell A-\Findim(A^{\op})\bigr)=\infty
\]
already in the class of finite-dimensional monomial algebras.
\end{enumerate}

The proof of (III) is elementary.  The new vertex of a one-point extension produces a simple module whose first syzygy is a carefully chosen three-dimensional module.  A two-step annihilator obstruction rules out every possible degree-one derived-delooping resolution.  The proof of (IV) is more involved: a functor to graded modules converts any hypothetical short derived-delooping resolution into a bounded filtration by short exact extensions.  An explicit composite of stable maps, invisible to every filtration factor, then gives a contradiction.

The paper is organized as follows.  In \cref{sec:prelim} we recall the definitions and several elementary stable-category facts.  In \cref{sec:comparison} we compare derived and sub-derived delooping levels.  The Barrios--Lanzilotta--Mata family is treated in \cref{sec:BLM}.  The small strict counterexample is proved in \cref{sec:small}.  The unbounded family is constructed in \cref{sec:unbounded}.

\section{Preliminaries}\label{sec:prelim}

Throughout, \(K\) denotes an arbitrary field.  Unqualified modules are finitely generated right modules unless explicitly stated otherwise.  For an algebra \(A\), write \(\Mod A\) for the category of all right \(A\)-modules, \(\modu A\) for its full subcategory of finitely generated modules, and \(\underline{\modu}\,A\) for the stable category modulo projectives.  For a module \(X\), let \(\add(X)\) and \(\Add(X)\) denote the classes of direct summands of finite and arbitrary direct sums of copies of \(X\), respectively.  For an integer \(q\geq0\), write \(X^{(q)}\) for the direct sum of \(q\) copies of \(X\).  We use
\[
\begin{aligned}
   \findim A
   &=\sup\{\pd_A X\mid X\in\modu A,\ \pd_A X<\infty\},\\
   \Findim A
   &=\sup\{\pd_A X\mid X\in\Mod A,\ \pd_A X<\infty\}
\end{aligned}
\]
for the little and big finitistic dimensions.  The syzygy functor is denoted by \(\Omega\).  If \(X\) is a retract of \(Y\) in the stable category, we write \(X\midst Y\).  We recall the following definitions from \cite[Definitions~2.5, 2.14 and 2.22]{GuoIgusa}.

\begin{definition}
Let \(k\geq0\) and \(M\in\modu A\).  The \emph{\(k\)-delooping level} of \(M\) is
\[
 k\text{-}\dell M
 =\inf\{d\geq0\mid
     \Omega^dM\midst\Omega^{d+k}N
     \text{ for some }N\in\modu A\}.
\]
For \(k=1\), we write \(\dell M\).
\end{definition}

Following \cite[p.~8]{GuoIgusa} and \cite[p.~43]{GuoSerial}, we call a module \(M\)
\emph{infinitely deloopable} if
\[
   k\text{-}\dell M=0
   \qquad\text{for every }k\geq1.
\]
Equivalently, for every \(k\geq1\), the module \(M\) is a retract in the
stable category of some \(k\)-th syzygy.

\begin{definition}
Let \(k\geq1\).  The \emph{\(k\)-derived delooping level} of \(M\) is the least integer \(m\geq0\) for which there is an exact sequence
\[
 0\longrightarrow C_n\longrightarrow\cdots\longrightarrow C_0
 \longrightarrow M\longrightarrow0,
 \qquad n\leq m,
\]
such that
\[
   (i+k)\text{-}\dell C_i\leq m-i
   \qquad(0\leq i\leq n).
\]
It is denoted by \(k\text{-}\ddell M\).  We write \(\ddell M=1\text{-}\ddell M\).
\end{definition}

\begin{definition}
Let \(k\geq1\).  The \emph{\(k\)-sub-derived delooping level} of \(M\) is the least integer \(m\geq0\) for which there is an exact sequence
\[
 0\longrightarrow M\longrightarrow D_0\longrightarrow\cdots
 \longrightarrow D_n\longrightarrow0,
 \qquad n\leq k,
\]
such that
\[
   (k-i)\text{-}\dell D_i\leq m+i
   \qquad(0\leq i\leq n).
\]
It is denoted by \(k\text{-}\sddell M\).  When \(k=1\),
\[
   \sddell M=\inf\{\dell N\mid M\hookrightarrow N\}.
\]
\end{definition}

For an Artin algebra, each of the delooping-type algebra invariants defined above is the maximum of the corresponding module invariant over the simple right modules.  In the finite-dimensional setting, we shall repeatedly use the inequalities
\begin{equation}\label{eq:basic-bounds}
   \Findim(A^{\op})\leq \ddell A\leq \dell A,
   \qquad
   \Findim(A^{\op})\leq \sddell A\leq \dell A,
\end{equation}
proved in \cite{GuoIgusa}.  For every \(M\in\modu A\) and every \(k\geq1\), we shall also use
\begin{equation}\label{eq:degree-monotonicity}
   \ddell M\leq k\text{-}\ddell M\leq k\text{-}\dell M,
   \qquad
   \sddell M\leq k\text{-}\sddell M\leq k\text{-}\dell M.
\end{equation}
The first inequalities express monotonicity in the degree, whereas the latter inequalities follow by taking witnesses of length zero; see \cite[Remarks~2.15, 2.17, 2.23 and~2.24]{GuoIgusa}.  Taking maxima over the simple modules gives the corresponding inequalities for algebra invariants.

We shall use the following standard fact about stable retracts.

\begin{lemma}[{\cite[Lemma~0.1(1)]{Gelinas}}]\label{lem:stable-actual}
If \(X\midst Y\), then \(X\) is an actual direct summand of \(Y\oplus P\) for some finitely generated projective module \(P\).
\end{lemma}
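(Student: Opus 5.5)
The plan is to prove this directly from the definition of \(\midst\) via a splitting argument. By definition, \(X\midst Y\) means there are morphisms \(f\colon X\to Y\) and \(g\colon Y\to X\) in \(\modu A\) with \(gf-1_X\) factoring through a projective module. Since \(X\) is finitely generated, that factorization can be taken through a finitely generated projective \(P\). First I would reduce to the case where the factorization passes through the projective cover \(\pi\colon P\to X\), or more simply through any epimorphism from a projective onto \(X\).

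\textbf{Step 1.} Fix a finitely generated projective \(P\) together with an epimorphism \(\pi\colon P\to X\). A map \(h\colon X\to X\) factors through some projective \(Q\), say \(h=ba\) with \(a\colon X\to Q\) and \(b\colon Q\to X\). Since \(Q\) is projective and \(\pi\) is onto, \(b\) lifts to \(b'\colon Q\to P\) with \(\pi b'=b\). Hence \(h=\pi(b'a)\), so \(h\) factors through \(\pi\).

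\textbf{Step 2.} Apply Step 1 to \(1_X-gf\), writing \(1_X-gf=\pi c\) with \(c\colon X\to P\). Define
\[
 \iota=\begin{pmatrix} f\\ c\end{pmatrix}\colon X\to Y\oplus P,
 \qquad
 \rho=(g\ \ \pi)\colon Y\oplus P\to X .
\]
Then \(\rho\iota=gf+\pi c=1_X\). So \(\iota\) is a split monomorphism, and \(X\) is a direct summand of \(Y\oplus P\) with \(P\) finitely generated projective.

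\textbf{Main obstacle.} The argument is routine, and the only point needing care is Step 1: the factorization through an arbitrary projective must be replaced by one through a fixed finitely generated projective. The lifting property handles this cleanly. Finite generation of \(X\) guarantees that \(P\) can be chosen finitely generated, since \(A\) is an Artin algebra and \(X\in\modu A\).
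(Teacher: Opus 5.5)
Your proof is correct and is essentially the paper's argument. The paper likewise writes \(1_X-vu=ba\) through a finitely generated projective \(P\) and observes that \((v,b)\binom{u}{a}=1_X\), so \(\binom{u}{a}\colon X\to Y\oplus P\) is a split monomorphism. Your Step~1, which refactors through a fixed epimorphism \(\pi\colon P\to X\), is a valid but unnecessary normalization that the paper omits, since the stable category of \(\modu A\) is already formed modulo maps factoring through finitely generated projectives.
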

\begin{proof}
For completeness, we recall the argument.  Represent the stable retraction by maps
\(u:X\to Y\) and \(v:Y\to X\).  Then \(1_X-vu\) factors through a finitely generated projective module, say
\[
 X\xrightarrow{a}P\xrightarrow{b}X.
\]
Thus
\[
 (v,b)\binom{u}{a}=vu+ba=1_X,
\]
so \(\binom{u}{a}:X\to Y\oplus P\) is a split monomorphism.
\end{proof}

The following characterization is immediate from G\'elinas's definition of the delooping level and
\cite[Lemma~0.1(1)]{Gelinas}; it is also recorded explicitly in
\cite[p.~207, immediately after Definition~3.1]{BLM}.

\begin{lemma}\label{lem:dell-zero}
For a finitely generated module \(X\), the following are equivalent:
\begin{enumerate}[label=\textup{(\roman*)}]
\item \(\dell X=0\);
\item \(X\) embeds into a finitely generated projective module;
\item \(X\) is torsionless.
\end{enumerate}
\end{lemma}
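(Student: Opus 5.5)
We prove the cycle of implications (ii)\(\Rightarrow\)(iii)\(\Rightarrow\)(ii) and (i)\(\Leftrightarrow\)(ii), working directly from the definition with \(k=1\):
\[
   \dell X=0\iff X\midst\Omega N \text{ for some } N\in\modu A .
\]

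For (ii)\(\Rightarrow\)(i), we suppose \(X\hookrightarrow P\) with \(P\) finitely generated projective, and set \(N=P/X\). The short exact sequence \(0\to X\to P\to N\to0\) shows that \(X\) is a first syzygy of \(N\), up to a projective summand. Indeed, comparing with a minimal projective cover of \(N\) via Schanuel's lemma gives \(X\oplus Q\cong\Omega N\oplus Q'\) for finitely generated projectives \(Q,Q'\). Hence \(X\cong\Omega N\) in \(\underline{\modu}\,A\), and in particular \(X\midst\Omega N\), so \(\dell X=0\).

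For (i)\(\Rightarrow\)(ii), we suppose \(X\midst\Omega N\). By \cref{lem:stable-actual}, \(X\) is an actual direct summand of \(\Omega N\oplus P\) for some finitely generated projective \(P\). Now \(\Omega N\) is a submodule of a finitely generated projective \(P_0\), namely the projective cover of \(N\). Therefore
\[
   X\hookrightarrow\Omega N\oplus P\hookrightarrow P_0\oplus P,
\]
and this composite embeds \(X\) into a finitely generated projective module.

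For (ii)\(\Leftrightarrow\)(iii), we recall that \(X\) is torsionless when the canonical map \(X\to X^{**}\) is injective, equivalently when \(X\) embeds in a product of copies of \(A\). Each of these conditions amounts to the statement that the maps \(X\to A\) separate the points of \(X\). Given (ii), we compose the embedding \(X\hookrightarrow P\) with the coordinate maps of an embedding \(P\hookrightarrow A^{(q)}\), and these compositions separate points. Conversely, suppose \(X\) is torsionless. Then \(\bigcap_{f\in X^*}\ker f=0\). Since \(X\) is finitely generated over an Artin algebra, it has finite length, so finitely many \(f_1,\dots,f_q\in X^*\) already satisfy \(\bigcap_i\ker f_i=0\). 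The map \((f_1,\dots,f_q)\colon X\to A^{(q)}\) is then injective, which gives (ii).

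The only non-formal point in the whole argument is this descending chain step: passing from the infinite intersection of kernels to a finite one. It is where finite generation over an Artin algebra is used, since the chain of finite intersections of kernels must stabilize in a module of finite length. The rest of the argument is bookkeeping with Schanuel's lemma and \cref{lem:stable-actual}.
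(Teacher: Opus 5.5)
Your proof is correct and follows the paper's argument: Schanuel's lemma against a projective cover for (ii)$\Rightarrow$(i), and \cref{lem:stable-actual} together with $\Omega N\subseteq P_0$ for (i)$\Rightarrow$(ii). The only difference is that you prove the (ii)$\Leftrightarrow$(iii) equivalence yourself, using finite length to reduce to finitely many functionals with zero common kernel, whereas the paper simply cites it as the standard characterization of finitely generated torsionless modules over an Artin algebra.
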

\begin{proof}
The equivalence of \textup{(ii)} and \textup{(iii)} is the standard characterization of finitely generated torsionless modules over an Artin algebra.  If \(\dell X=0\), then \(X\midst\Omega N\) for some finitely generated module \(N\).  By \cref{lem:stable-actual}, the module \(X\) is an actual direct summand of \(\Omega N\oplus P\) for some finitely generated projective \(P\), and hence embeds into a finitely generated projective module.

Conversely, suppose that
\[
 0\longrightarrow X\longrightarrow P\longrightarrow C\longrightarrow0
\]
is exact with \(P\) finitely generated projective.  Comparing this sequence with a projective-cover sequence
\[
 0\longrightarrow\Omega C\longrightarrow P(C)\longrightarrow C\longrightarrow0
\]
and applying Schanuel's lemma gives
\[
 X\oplus P(C)\cong\Omega C\oplus P.
\]
Thus \(X\cong\Omega C\) in the stable category, so \(X\midst\Omega C\) and \(\dell X=0\).
\end{proof}

\begin{lemma}\label{lem:syzygy-shift}
If \(\ell\text{-}\dell X\leq a\) and \(s\geq0\), then
\[
   (\ell+s)\text{-}\dell(\Omega^sX)\leq\max\{a-s,0\}.
\]
\end{lemma}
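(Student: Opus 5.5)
We prove the lemma by unwinding the definition of \(\ell\text{-}\dell X\) and applying the syzygy functor. Suppose \(\ell\text{-}\dell X\leq a\). Then there is some \(d\leq a\) and some \(N\in\modu A\) with
\[
   \Omega^dX\midst\Omega^{d+\ell}N .
\]
The proof splits into two cases, according to whether \(d\leq s\) or \(d>s\). The only tool needed is that \(\Omega\) is an additive endofunctor of \(\underline{\modu}\,A\), so it preserves stable retracts: if \(X'\midst Y'\) via \(u,v\) with \(vu=1\) stably, then \(\Omega(v)\Omega(u)=1\) stably. Iterating, \(\Omega^t\) preserves \(\midst\) for every \(t\geq0\).

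\emph{Case \(d\leq s\).} Apply \(\Omega^{s-d}\) to the retraction. This gives
\[
   \Omega^{s}X\midst\Omega^{s+\ell}N=\Omega^{0+(\ell+s)}N .
\]
This says exactly that \((\ell+s)\text{-}\dell(\Omega^sX)=0\), which is at most \(\max\{a-s,0\}\).

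\emph{Case \(d>s\).} Put \(e=d-s\), so that \(0<e\leq a-s\). Then \(\Omega^e(\Omega^sX)=\Omega^dX\), and
\[
   \Omega^{d+\ell}N=\Omega^{e+(\ell+s)}N .
\]
Hence \(\Omega^e(\Omega^sX)\midst\Omega^{e+(\ell+s)}N\). This gives \((\ell+s)\text{-}\dell(\Omega^sX)\leq e\leq a-s\).

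There is no real obstacle here. The one point to check is that syzygies are well defined only up to projective summands, but they are well defined as objects of \(\underline{\modu}\,A\), and the relation \(\midst\) lives in that category. So the identifications \(\Omega^e\Omega^s=\Omega^{e+s}\) used above are harmless.
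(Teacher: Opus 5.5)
Your proposal is correct and follows essentially the same route as the paper: choose \(d\leq a\) and \(N\) with \(\Omega^dX\midst\Omega^{d+\ell}N\). When \(d\geq s\) you re-index with \(e=d-s\), and when \(d<s\) you apply \(\Omega^{s-d}\) to get value \(0\). Putting the boundary case \(d=s\) into the second case, as you do, makes no difference.
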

\begin{proof}
Choose \(d\leq a\) and \(N\in\modu A\) such that
\[
   \Omega^dX\midst\Omega^{d+\ell}N.
\]
If \(d\geq s\), then
\[
   \Omega^{d-s}(\Omega^sX)
   =\Omega^dX
   \midst
   \Omega^{(d-s)+(\ell+s)}N,
\]
and hence
\[
   (\ell+s)\text{-}\dell(\Omega^sX)
   \leq d-s\leq a-s.
\]
If \(d<s\), applying \(\Omega^{s-d}\) to the original stable retraction gives
\[
   \Omega^sX\midst\Omega^{s+\ell}N,
\]
so
\[
   (\ell+s)\text{-}\dell(\Omega^sX)=0.
\]
\end{proof}

\section{Derived versus sub-derived delooping levels}\label{sec:comparison}

We begin by answering the comparison problem in \cite[Question~5.1(1)]{GuoIgusa}.  In fact, the following injection estimate is stronger than the requested algebra-level inequality.

\begin{theorem}[Ordinary comparison]\label{thm:ordinary-comparison}
Let \(A\) be an Artin algebra and let \(M,N\in\modu A\).  If \(M\hookrightarrow N\), then
\[
   \ddell M\leq \dell N.
\]
Consequently, for every \(M\in\modu A\),
\[
   \ddell M\leq \sddell M,
\]
and hence
\[
   \ddell A\leq \sddell A.
\]
Consequently, every Artin algebra \(A\) satisfies
\[
   \Findim(A^{\op})\leq \ddell A\leq \sddell A\leq \dell A.
\]
\end{theorem}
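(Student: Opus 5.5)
The plan is to turn an embedding \(M\hookrightarrow N\) into a two-term derived-delooping resolution of \(M\), built from syzygies of \(N\) and of the cokernel. Let \(d=\dell N\) and \(Q=N/M\), so that
\[
 0\longrightarrow M\longrightarrow N\longrightarrow Q\longrightarrow0
\]
is exact.

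First I dispose of the case \(d=0\). By \cref{lem:dell-zero}, \(N\) then embeds into a finitely generated projective module, hence so does \(M\), and the same lemma gives \(\dell M=0\). Taking a witness of length zero yields \(\ddell M\leq\dell M=0\), as in \eqref{eq:degree-monotonicity}.

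Now assume \(d\geq1\). I will produce an exact sequence
\[
 0\longrightarrow \Omega N\longrightarrow \Omega Q\oplus P'\longrightarrow M\longrightarrow0
\]
with \(P'\) finitely generated projective. To get it, let \(p:P(N)\to N\) be the projective cover and consider the composite \(P(N)\to N\to Q\). This composite is surjective; let \(K\) be its kernel. Since \(p\) is surjective, \(p\) restricts to a surjection \(K\to M\), and the kernel of this restriction is \(\ker p=\Omega N\). This gives \(0\to\Omega N\to K\to M\to0\). On the other hand, \(K\) is the kernel of an epimorphism from a projective onto \(Q\). Schanuel's lemma, applied as in the proof of \cref{lem:dell-zero}, gives \(K\oplus P(Q)\cong\Omega Q\oplus P(N)\). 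To absorb the extra projective, I add the projective \(P(Q)\) to both of the first two terms (directly summing with the identity on \(P(Q)\)). This replaces \(K\) by \(\Omega Q\oplus P'\) at the cost of replacing \(\Omega N\) by \(\Omega N\oplus P(Q)\). That costs nothing, since \(\Omega N\oplus P(Q)\cong\Omega N\) in the stable category, and delooping levels depend only on stable isomorphism classes. I set \(C_0=\Omega Q\oplus P'\) and \(C_1=\Omega N\) (up to that projective summand), take \(n=1\) and \(m=d\), and verify the two conditions in the definition.

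The condition \(1\text{-}\dell C_0\leq d\) holds trivially: \(C_0\) is stably isomorphic to \(\Omega Q\), so \(\dell C_0=0\). The condition \(2\text{-}\dell C_1\leq d-1\) follows from \cref{lem:syzygy-shift} with \(\ell=s=1\) and \(a=d\). That lemma gives \(2\text{-}\dell(\Omega N)\leq\max\{d-1,0\}=d-1\), because \(d\geq1\). Hence \(\ddell M\leq d=\dell N\).

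The consequences are then formal. Since \(\sddell M=\inf\{\dell N\mid M\hookrightarrow N\}\), applying the first inequality to each embedding gives \(\ddell M\leq\sddell M\). Taking maxima over the simple right modules gives \(\ddell A\leq\sddell A\). Combining this with \eqref{eq:basic-bounds} gives the displayed chain \(\Findim(A^{\op})\leq\ddell A\leq\sddell A\leq\dell A\).

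The only step needing care is building the sequence \(0\to\Omega N\to\Omega Q\oplus P'\to M\to0\) with the right terms. In particular, one must track the projective summands coming from Schanuel's lemma so that they only change modules up to stable isomorphism.
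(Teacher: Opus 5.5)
Your argument is correct and is essentially the paper's proof: your $K=p^{-1}(M)$ is exactly the paper's $E$, and your Schanuel detour is harmless but unnecessary, since $K\subseteq P(N)$ already gives $\dell K=0$ by \cref{lem:dell-zero}. One small point: \eqref{eq:basic-bounds} is stated only for finite-dimensional algebras, so for an Artin algebra you should take $\Findim(A^{\op})\leq\ddell A$ from Guo's Artin-algebra version, as the paper does; $\sddell A\leq\dell A$ is immediate from the definition.
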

\begin{proof}
The assertion is immediate if \(\dell N=\infty\).  Put \(d=\dell N<\infty\).

If \(d=0\), then \cref{lem:dell-zero} shows that \(N\), and hence \(M\), embeds into a finitely generated projective module.  Thus \(\dell M=0\), and the length-zero sequence witnesses \(\ddell M=0\).

Assume \(d\geq1\).  Let \(P\twoheadrightarrow N\) be a projective cover and let \(E\) be the inverse image of \(M\) in \(P\).  Then there is an exact sequence
\[
   0\longrightarrow\Omega N\longrightarrow E\longrightarrow M\longrightarrow0.
\]
Since \(E\subseteq P\), \cref{lem:dell-zero} gives \(\dell E=0\).  By the definition of \(d=\dell N\), there is a finitely generated module \(L\) such that
\[
   \Omega^dN\midst\Omega^{d+1}L.
\]
Equivalently,
\[
   \Omega^{d-1}(\Omega N)\midst\Omega^{(d-1)+2}L,
\]
and hence
\[
   2\text{-}\dell(\Omega N)\leq d-1.
\]
In the definition of \(\ddell M\), take
\[
   C_0=E,\qquad C_1=\Omega N,\qquad n=1,\qquad m=d.
\]
Since \(1=n\leq d=m\) and
\[
   \dell C_0=0\leq d,
   \qquad
   2\text{-}\dell C_1\leq d-1,
\]
the displayed short exact sequence is a derived-delooping witness of value \(d\) for \(M\).  Therefore
\[
   \ddell M\leq d=\dell N.
\]
Taking the infimum over all embeddings \(M\hookrightarrow N\) gives
\[
   \ddell M\leq
   \inf_{M\hookrightarrow N}\dell N
   =\sddell M.
\]
Taking maxima over the simple right \(A\)-modules yields \(\ddell A\leq\sddell A\).  Finally, the inequality
\[
   \Findim(A^{\op})\leq\ddell A
\]
holds for Artin algebras by \cite[Theorem~2.2]{GuoSymmetry}, while \(\sddell A\leq\dell A\) follows directly from the definition.  Together with the inequality proved above, this gives
\[
   \Findim(A^{\op})\leq\ddell A\leq\sddell A\leq\dell A.
\]
\end{proof}

The ordinary comparison does not extend degree by degree.  We next give a small counterexample.

\begin{theorem}\label{thm:eight-dimensional}
	Let \(Q\) be the quiver
\[
\begin{tikzcd}[
	ampersand replacement=\&,
	row sep=large,
	column sep=large
	]
	1 \arrow[r, bend left=18, "a"]
	\& 2
	\arrow[l, bend left=18, "b"]
	\arrow[loop right, min distance=12mm, "g"]
\end{tikzcd}
\]
	and let paths be multiplied from left to right. Set
	\[
	C=KQ/(aba,ag,gb,g^2).
	\]
	Then \(C\) is a monomial algebra of dimension \(8\), and, for every \(k\geq2\),
	\[
	k\text{-}\sddell C=0<1=k\text{-}\ddell C.
	\]
	In ordinary degree,
	\[
	\ddell C=\sddell C=\dell C=\Findim(C^{\op})=0.
	\]
\end{theorem}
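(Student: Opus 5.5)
The plan is to compute everything explicitly from the structure of the indecomposable projective right \(C\)-modules, and then read off each invariant from the definitions. Since paths are multiplied left to right, \(e_iC\) is spanned by the nonzero paths starting at \(i\). This gives
\[
 e_1C=\langle e_1,a,ab\rangle,
 \qquad
 e_2C=\langle e_2,b,g,ba,bab\rangle .
\]
So \(\dim C=8\), and \(C\) is monomial by construction. The module \(e_1C\) is uniserial with composition factors \(1,2,1\). Left multiplication by \(b\) gives \(bC\cong e_1C\), and \(gC\cong S_2\), so \(\rad e_2C=bC\oplus gC\cong e_1C\oplus S_2\). Let \(M=aC\), the uniserial module with top \(S_2\) and socle \(S_1\). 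Then I get the syzygies
\[
 \Omega S_1=M,\qquad \Omega S_2\cong e_1C\oplus S_2,\qquad \Omega M\cong S_2\oplus M .
\]
The last one comes from computing the kernel \(\langle g,ba,bab\rangle\) of \(e_2C\to M\), \(e_2\mapsto a\).

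For the ordinary degree, both simples are torsionless: \(S_1\cong abC\subseteq e_1C\) and \(S_2\cong gC\subseteq e_2C\). \Cref{lem:dell-zero} then gives \(\dell S_i=0\), so \(\dell C=0\). The bounds \eqref{eq:basic-bounds}, together with \cref{thm:ordinary-comparison}, force \(\Findim(C^{\op})=\ddell C=\sddell C=0\).

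Next I would treat \(S_2\) and \(M\) in every degree. The displayed syzygies show \(S_2\midst\Omega S_2\) and \(M\midst\Omega M\). Iterating, \(S_2\midst\Omega^kS_2\) and \(M\midst\Omega^kM\) for all \(k\), so both are infinitely deloopable. Hence every \(k\)-invariant of \(S_2\) vanishes, and \(k\text{-}\dell S_1\le 1\) because \(\Omega S_1=M\midst\Omega^{1+k}M\). Combined with \eqref{eq:degree-monotonicity}, this gives \(k\text{-}\ddell S_1\le 1\).

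For \(k\text{-}\sddell C=0\), I would use the length-zero witness \(0\to S_1\to D_0=S_1\)? No: that requires \(k\text{-}\dell S_1=0\), which will fail. Instead I take the length-one witness
\[
 0\to S_1\to e_1C\to X\to0,\qquad X=e_1C/abC .
\]
Here \(e_1C\) is projective, so \(k\text{-}\dell e_1C=0\). I then need \((k-1)\text{-}\dell X\le 1\). This holds because \(\Omega X\cong S_1\) and \(S_1=\Omega X\midst\Omega^{1+(k-1)}X\) trivially. In other words, the defining condition \(\Omega^1X\midst\Omega^{1+(k-1)}N\) is satisfied with \(N=X\) exactly when \(k-1\ge0\). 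I must double-check the indexing in the definition (\(d=1\), \(\Omega^1X\midst\Omega^{k}N\)); if it does not close up as written, I would choose a longer witness using \(n\le k\).

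The lower bound \(k\text{-}\ddell S_1\ge1\) for \(k\ge2\) is where I expect the real work. With \(m=0\), the definition forces \(n=0\) and \(C_0=S_1\), so I must show that \(S_1\) is not a stable retract of any \(\Omega^kN\) with \(k\ge2\). By \cref{lem:stable-actual} and Krull--Schmidt, it suffices to show that \(S_1\) is never a direct summand of \(\Omega Y\) with \(Y\) torsionless. I would prove this by listing the indecomposable torsionless modules, that is, the indecomposable submodules of \(e_1C\) and \(e_2C\), and computing the syzygy of each. The list should be \(S_1,S_2,M,e_1C,e_2C\) and possibly a few more such as \(bC+gC\)-type modules. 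The check is that \(S_1\) never occurs as a summand of these syzygies. The key point is that \(S_1\) only arises as \(\Omega X\) with \(X\) having top \(S_1\) and socle \(S_2\), and such an \(X\) is not torsionless: any element at vertex \(1\) inside a projective generates a copy of \(e_1C\). Establishing this exhaustively is the main obstacle. Once it is done, \(k\text{-}\ddell S_1=1\), hence \(k\text{-}\ddell C=1\), completing the proof.
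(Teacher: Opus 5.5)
\paragraph{The sub-derived witness fails for \(k\geq2\).} Your computations of the projectives and syzygies are correct, and so are the ordinary-degree part and the bound \(k\text{-}\ddell S_1\leq1\). The proof of \(k\text{-}\sddell C=0\) for \(k\geq2\), however, is wrong. In your witness \(0\to S_1\to e_1C\to X\to0\), the condition \((k-1)\text{-}\dell X\leq1\) needs one of two things for some \(N\): either \(X\midst\Omega^{k-1}N\), or \(\Omega X\midst\Omega^{1+(k-1)}N=\Omega^kN\).

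Taking \(N=X\) gives \(\Omega^kX\), not \(\Omega X\). For \(k\geq2\), \(\Omega^kX\) is, up to projectives, a sum of copies of \(M\) and \(S_2\). Since \(\Omega X=S_1\), the second option says literally that \(k\text{-}\dell S_1=0\), which is exactly what your lower bound is meant to refute. The first option fails because \(X\) is not torsionless: its top generator \(z\) has \(za\neq0=zab\), whereas in a projective \(za\neq0\) forces \(zab\neq0\). So this witness only gives \(k\text{-}\sddell S_1\leq1\), and the proposal contradicts itself.

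The missing idea is to embed \(S_1\) not into \(e_1C\) but into the infinitely deloopable module \(M=aC\), as its socle. The sequence \(0\to S_1\to M\to S_2\to0\) has \(k\text{-}\dell M=0\) and \((k-1)\text{-}\dell S_2=0\). It therefore witnesses \(k\text{-}\sddell S_1=0\) for every \(k\geq1\). This is the paper's witness, and you had already shown both terms are infinitely deloopable.

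\paragraph{The lower bound is still only a plan, and the plan is unreliable.} Three of its steps do not hold as stated:
\begin{itemize}
\item An indecomposable torsionless module only embeds in a direct sum of projectives, not necessarily in \(e_1C\) or \(e_2C\). Your proposed list therefore needs a proof.
\item The claim that \(S_1\) arises only as \(\Omega X\) with \(X\) of top \(S_1\) and socle \(S_2\) is false: for example, \(\Omega(e_2C/babC)=babC\cong S_1\).
\item The statement ``any element at vertex \(1\) of a projective generates a copy of \(e_1C\)'' is false, as \(ab\) and \(bab\) show.
\end{itemize}
The correct fact is this: if \(P\) is projective and \(z\in Pe_1\) satisfies \(za\neq0\), then \(zab\neq0\). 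With it, no classification is needed.

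Suppose \(Kx\cong S_1\) is a summand of \(L=\ker(P\to X)\subseteq\rad P\). Then \(xa=0\) forces \(x=zab\) for some \(z\in Pe_1\). If \(za\in L\), then \(x\in L\rad C\), which is impossible for a generator of a simple summand. Hence the image \(w\) of \(z\) in \(X\) satisfies \(wa\neq0=wab\), so \(X\) is not torsionless. Apply this to \(X=\Omega^{k-1}N\), then use \cref{lem:stable-actual} and Krull--Schmidt; this gives \(k\text{-}\dell S_1>0\) for \(k\geq2\). This is essentially the paper's annihilator argument.
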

\begin{proof}
The surviving paths are
\[
   e_1,e_2,a,b,g,ab,ba,bab.
\]
Write \(P_i=e_iC\), let \(S_i\) be the corresponding simple, and put \(T=aC\).  Then
\[
\begin{split}
   P_1&=\langle e_1,a,ab\rangle_K,\\
   P_2&=\langle e_2,b,ba,bab,g\rangle_K,\\
   T&=\langle a,ab\rangle_K.
\end{split}
\]
Direct calculation gives
\begin{equation}\label{eq:eight-syzygies}
   \Omega S_1=T,
   \qquad
   \Omega S_2\simeq P_1\oplus S_2,
   \qquad
   \Omega T\simeq T\oplus S_2.
\end{equation}
Hence \(T\) and \(S_2\) occur as direct summands of syzygies of every positive order.

We claim that \(S_1\) is not a stable summand of a second syzygy.  Let \(U\subseteq\rad P\) be a minimal first syzygy, let \(f:P'\twoheadrightarrow U\) be its projective cover, and put \(L=\ker f\subseteq\rad P'\).  Coordinatewise, the kernel of right multiplication by \(a\) on \((\rad P')e_1\) is spanned by paths of types \(ab\) and \(bab\).  Hence, if \(x\in Le_1\) satisfies \(xa=0\), then \(x=yb\) for some linear combination \(y\) of coordinate paths of types \(a\) and \(ba\).  Since \(f(P')\subseteq U\subseteq\rad P\), the images of the projective generators lie in \(\rad P\).  On \((\rad P)e_1\), right multiplication by \(a\) kills every radical basis path except those of type \(b\), which it sends to paths of type \(ba\); on \((\rad P)e_2\), right multiplication by \(ba\) is zero.  It follows that \(f(y)\) lies in the span of paths of type \(ba\), on which right multiplication by \(b\) is injective.  Since \(f(y)b=f(x)=0\), one has \(f(y)=0\), whence \(y\in L\) and \(x\in L\rad C\).  Thus the \(S_1\)-part of \(\soc L\) lies in \(L\rad C\), which excludes an actual \(S_1\)-summand.  Indeed, if \(L\simeq S_1\oplus L'\), then
\[
   L\rad C=0\oplus L'\rad C,
\]
so a nonzero generator of the \(S_1\)-summand would lie outside \(L\rad C\).  By \cref{lem:stable-actual} and Krull--Schmidt, it also excludes a stable one.

The exact sequence
\[
   0\longrightarrow S_1\longrightarrow T\longrightarrow S_2\longrightarrow0
\]
witnesses \(k\text{-}\sddell S_1=0\) for every \(k\geq1\), and \eqref{eq:eight-syzygies} gives the same value for \(S_2\).  Hence \(k\text{-}\sddell C=0\).

For \(k\geq2\), the preceding claim excludes \(k\text{-}\dell S_1=0\), because every \(k\)-th syzygy is a second syzygy: for every module \(N\),
\[
   \Omega^kN=\Omega^2(\Omega^{k-2}N).
\]
A value-zero derived witness has length zero, so \(k\text{-}\ddell S_1=0\) would be equivalent to \(k\text{-}\dell S_1=0\).  On the other hand, \(\Omega S_1=T\) is infinitely deloopable.  More explicitly, the exact sequence
\[
   0\longrightarrow T\longrightarrow P_1\longrightarrow S_1\longrightarrow0
\]
is a \(k\)-derived-delooping witness of value \(1\), and hence \(k\text{-}\ddell S_1\leq1\).  Thus \(k\text{-}\ddell S_1=1\), while \(k\text{-}\ddell S_2=0\).

For \(k=1\), the embeddings \(S_1=(ab)C\subseteq P_1\) and \(S_2=gC\subseteq P_2\) show that both simples are torsionless.  Thus \(\dell C=0\), and the remaining equalities follow from \eqref{eq:basic-bounds}.
\end{proof}

\begin{remark}
The higher-degree counterexample in \cref{thm:eight-dimensional} does not concern the equality \(\ddell A=\Findim(A^{\op})\): in ordinary degree both sides are zero.  In particular, one must not confuse \(2\text{-}\ddell C=1\) with \(\ddell C=1\).
\end{remark}

\begin{proposition}\label{prop:all-degree-comparison}
Let \(A\) be an Artin algebra.  For every \(k\geq1\) and every \(M\in\modu A\),
\[
   k\text{-}\ddell M
   \leq k\bigl(k\text{-}\sddell M+1\bigr).
\]
Thus finite \(k\)-sub-derived delooping level implies finite \(k\)-derived delooping level.
\end{proposition}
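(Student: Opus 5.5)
The plan is to convert a sub-derived witness into a derived witness by resolving the cosyzygies with projectives, in the spirit of the proof of \cref{thm:ordinary-comparison}. There, the single cosyzygy \(N/M\) was replaced by the syzygy \(\Omega N\) sitting over the torsionless module \(E\). Concretely, put \(m=k\text{-}\sddell M<\infty\) and fix a witness
\[
 0\to M\to D_0\to\cdots\to D_n\to 0,\qquad n\leq k,
\]
with \((k-i)\text{-}\dell D_i\leq m+i\). Let \(Z_i\) be the image of \(D_{i-1}\to D_i\), so \(Z_0=M\) and \(Z_n=D_n\).

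\emph{Step 1 (pullback trick).} For each short exact sequence \(0\to Z_i\to D_i\to Z_{i+1}\to0\), I would pull back a projective cover \(P\twoheadrightarrow Z_{i+1}\) along \(D_i\to Z_{i+1}\). The pullback \(E\) sits in two sequences:
\[
 0\to\Omega Z_{i+1}\to E\to D_i\to 0
 \qquad\text{and}\qquad
 0\to Z_i\to E\to P\to 0 .
\]
The second one splits, so \(E\cong Z_i\oplus P\). Thus, up to projective summands, \(Z_i\) is an extension of \(D_i\) by \(\Omega Z_{i+1}\). Iterating, with horseshoe-type constructions to carry the projective summands along, I expect an exact sequence ending in \(M\) whose terms are \(D_0\), syzygies \(\Omega^{j}D_i\), and projectives. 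Projectives are harmless, since they have \(\ell\text{-}\dell=0\) in every degree.

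\emph{Step 2 (spacing).} I would not place \(\Omega^iD_i\) at homological position \(i\). Instead I would splice in the full projective resolutions, so that the block coming from \(D_i\) appears at a position \(p_i\) spaced roughly \(k\) apart, or \(m+1\) apart. The total length is then at most about \(k(m+1)\), matching the claimed bound.

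\emph{Step 3 (degree conditions).} For a term \(\Omega^{p}D_i\) at position \(p\), the derived condition requires \((p+k)\text{-}\dell\leq m'-p\) with \(m'=k(m+1)\). \Cref{lem:syzygy-shift} with \(\ell=k-i\) and \(s=p\) gives
\[
 (k-i+p)\text{-}\dell(\Omega^pD_i)\leq\max\{m+i-p,0\}.
\]

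The main obstacle is the degree mismatch of \(i\) between the available degree \(k-i+p\) and the required degree \(k+p\). Delooping levels are not monotone upward in the degree, so this cannot be fixed by a trivial comparison. My first attempt would exploit the slack created by the spacing. Once \(p\) exceeds \(m+i\), the term becomes a genuine \((k-i+p)\)-fold syzygy retract. I would then try to absorb the missing \(i\) degrees by regrouping the spliced sequence, moving \(i\) extra projective steps into the resolution, so that the witness module \(N\) in \(\Omega^d(\cdot)\midst\Omega^{d+\ell}N\) is itself replaced by a syzygy. Each of the at most \(k\) blocks can consume up to \(m+1\) positions, which is exactly the budget \(k(m+1)\). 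If this bookkeeping fails, I would fall back on proving only finiteness, the second assertion of the proposition, by taking the spacing large enough.
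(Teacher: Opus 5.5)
Your proposal has a genuine gap. It never produces a $k$-derived-delooping witness, or any substitute for one. The obstacle you isolate in Step~3 is left unresolved, and your fallback inherits the same problem.

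\emph{The ``degree mismatch'' comes from misplacing the terms.} Putting $\Omega^pD_i$ at position $p$ puts $i$ syzygies too few on $D_i$. If you carry out your own Step~1 iteratively (a horseshoe argument along $0\to K_i\to D_i\to K_{i+1}\to0$), you get actual exact sequences
\[
0\longrightarrow\Omega^{i+1}K_{i+1}\oplus P_i'\longrightarrow\Omega^iK_i\oplus Q_i\longrightarrow\Omega^iD_i\longrightarrow0 .
\]
So the building block contributed by $D_i$ is $\Omega^iD_i$. Then \cref{lem:syzygy-shift} with $\ell=k-i$, $s=i$, $a=m+i$ gives $k\text{-}\dell(\Omega^iD_i)\le m$ for $i<k$: the $i$ extra syzygies exactly supply the $i$ missing degrees. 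For $i=n=k$ the sub-derived condition on $D_k$ is vacuous, so you need a separate observation: $\Omega^kD_k$ is itself a $k$-th syzygy, hence $k\text{-}\ddell(\Omega^kD_k)=0$. Your proposal does not address this term. Spacing cannot repair a misplaced term, because a stable retract of a $(k-i+p)$-th syzygy need not be a retract of a $(k+p)$-th syzygy, however large $p$ is. So the finiteness fallback fails for the same reason.

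\emph{You also lack a way to pass from extensions to a witness.} The sequences above have $\Omega^iK_i\oplus Q_i$ as their \emph{middle} term; for $i=0$ this is $M\oplus Q_0$. Since $D_0$ is a quotient of $M\oplus Q_0$, iterating them does not give ``an exact sequence ending in $M$ whose terms are $D_0$, syzygies, and projectives''. The paper closes this step with the extension estimate \cite[Lemma~3.1]{GuoIgusa}, iterated along these sequences. Using the $n=k$ observation for the last summand, this gives
\[
k\text{-}\ddell M\le\sum_{i=0}^{n}k\text{-}\ddell(\Omega^iD_i)+n\le k(m+1).
\]
If you prefer a single explicit witness, the splicing must use the correct shifts. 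Replace the complex $D_0\to\cdots\to D_n$, $n$ times, by the kernel of a degreewise surjection onto it from a contractible complex of projectives. This produces a resolution $0\to C_n\to\cdots\to C_0\to M\to0$ with $C_j\simeq\Omega^nD_{n-j}$ up to projective summands. Then \cref{lem:syzygy-shift} with $s=n$ lands exactly in degree $k+j$, and this route even gives the sharper bound $\max\{m,n\}$.
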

\begin{proof}
If \(k\text{-}\sddell M=\infty\), there is nothing to prove.  Put
\[
   m=k\text{-}\sddell M<\infty
\]
and choose a witnessing sequence
\[
 0\longrightarrow M\longrightarrow D_0\longrightarrow\cdots
 \longrightarrow D_n\longrightarrow0,
 \qquad n\leq k.
\]
If \(n=0\), then \(M\simeq D_0\), and hence
\[
   k\text{-}\ddell M\leq k\text{-}\dell M\leq m\leq k(m+1).
\]
Assume henceforth that \(n\geq1\).  Put \(K_0=M\), and for \(1\leq i\leq n\) let
\[
   K_i=\operatorname{Im}(D_{i-1}\longrightarrow D_i).
\]
Thus \(K_n=D_n\), and the witnessing sequence splits into short exact sequences
\[
   0\longrightarrow K_i\longrightarrow D_i\longrightarrow K_{i+1}\longrightarrow0
   \qquad(0\leq i<n).
\]
For each \(0\leq i<n\), an iterated horseshoe argument gives projective modules \(P_i'\) and \(Q_i\) and an actual exact sequence
\[
 0\longrightarrow\Omega^{i+1}K_{i+1}\oplus P_i'
 \longrightarrow\Omega^iK_i\oplus Q_i
 \longrightarrow\Omega^iD_i\longrightarrow0.
\]
For every \(0\leq i\leq n\) with \(i<k\), the witnessing condition gives
\[
   (k-i)\text{-}\dell D_i\leq m+i.
\]
Applying \cref{lem:syzygy-shift} with \(\ell=k-i\), \(s=i\), and \(a=m+i\), we obtain
\[
   k\text{-}\dell(\Omega^iD_i)\leq m.
\]
Consequently, by \eqref{eq:degree-monotonicity},
\[
   k\text{-}\ddell(\Omega^iD_i)
   \leq k\text{-}\dell(\Omega^iD_i)
   \leq m
   \qquad(i<k).
\]
Adjoining projective direct summands does not change the \(k\)-derived delooping level.  Therefore, iterating the extension estimate in \cite[Lemma~3.1]{GuoIgusa} along these actual short exact sequences gives
\[
   k\text{-}\ddell M
   \leq \sum_{i=0}^{n}k\text{-}\ddell(\Omega^iD_i)+n.
\]
If \(n<k\), the right-hand side is at most \((n+1)m+n\leq k(m+1)\).  If \(n=k\), the last term \(\Omega^kD_k\) is itself a \(k\)-th syzygy, so its \(k\)-derived delooping level is zero; hence the right-hand side is at most \(km+k=k(m+1)\).
\end{proof}

\section{The Barrios--Lanzilotta--Mata family}\label{sec:BLM}

We revisit the monomial family introduced in \cite[Example~4.22]{BLM}.  The purpose of that example was to produce an arbitrarily large gap between the ordinary delooping level and the opposite finitistic dimension, and its basic syzygy pattern was recorded there.  Our exact calculation below confirms that qualitative gap phenomenon, refines the numerical values, and determines all \(k\)-derived and \(k\)-sub-derived delooping levels.  For general background on the path-generated structure of syzygies over monomial relation algebras, see \cite[Theorem~I]{HZ}, applied to the opposite algebra; the sharper calculations of the relevant uniserial modules needed here are given explicitly below.  Besides showing that \(k\text{-}\sddell A_{n,s}-k\text{-}\ddell A_{n,s}\) can be arbitrarily large while both terms remain finite, the calculation shows that this family does not produce the strict inequality suggested in \cite[Introduction]{GuoSerial}.

Let \(s\geq1\) and \(n\geq s+4\).  Let \(Q_{n,s}\) be the quiver
\begin{center}
\begin{tikzpicture}[
   >=stealth,
   x=1cm,
   y=1cm,
   every node/.style={font=\small},
   every edge/.style={draw,->}
]
  \node (v0)  at (0,0)    {$0$};
  \node (v1)  at (1.40,0) {$1$};
  \node (d1)  at (2.80,0) {$\cdots$};
  \node (vsm) at (4.20,0) {$s-1$};
  \node (vs)  at (5.60,0) {$s$};
  \node (vsp) at (7.00,0) {$s+1$};
  \node (vsp2) at (8.40,0) {$s+2$};
  \node (vsp3) at (9.80,0) {$s+3$};
  \node (d2)  at (11.20,0) {$\cdots$};
  \node (vn)  at (12.60,0) {$n$};

  \draw[->] (v0)  -- node[above,font=\scriptsize] {$\alpha_0$} (v1);
  \draw[->] (v1)  -- node[above,font=\scriptsize] {$\alpha_1$} (d1);
  \draw[->] (d1)  -- node[above,font=\scriptsize] {$\alpha_{s-2}$} (vsm);
  \draw[->] (vsm) -- node[above,font=\scriptsize] {$\alpha_{s-1}$} (vs);
  \draw[->] (vs)  -- node[above,font=\scriptsize] {$\alpha_s$} (vsp);
  \draw[->] (vsp) -- node[above,font=\scriptsize] {$\alpha_{s+1}$} (vsp2);
  \draw[->] (vsp2) -- node[above,font=\scriptsize] {$\alpha_{s+2}$} (vsp3);
  \draw[->] (vsp3) -- node[above,font=\scriptsize] {$\alpha_{s+3}$} (d2);
  \draw[->] (d2) -- node[above,font=\scriptsize] {$\alpha_{n-1}$} (vn);

  \draw[->] (v0) to[loop left, min distance=10mm, looseness=8]
       node[left,font=\scriptsize] {$\gamma$} (v0);
  \draw[->] (v0) to[bend left=27]
       node[pos=.48,above,font=\scriptsize] {$\beta_1$} (vs);
  \draw[->] (v0) to[bend right=24]
       node[pos=.48,below,font=\scriptsize] {$\beta_2$} (vsp);
  \draw[->] (v0) to[bend left=19]
       node[pos=.58,above,font=\scriptsize] {$\beta_3$} (vsp2);
  \draw[->] (v0) to[bend right=16]
       node[pos=.58,below,font=\scriptsize] {$\beta_4$} (vsp3);
\end{tikzpicture}
\end{center}
Formally, the vertex set is \(\{0,1,\ldots,n\}\), and the arrows are
\[
   \alpha_i:i\longrightarrow i+1\quad(0\leq i<n),
   \qquad
   \gamma:0\longrightarrow0,
   \qquad
   \beta_j:0\longrightarrow s+j-1\quad(1\leq j\leq4).
\]
Thus, when \(s=1\), the arrows \(\alpha_0\) and \(\beta_1\) are distinct parallel arrows from \(0\) to \(1\).  The displayed diagram is interpreted schematically at the boundary values of the parameters: coincident displayed vertices are identified and empty dotted portions are omitted.  Let paths be multiplied from left to right.  Define
\[
   A_{n,s}=KQ_{n,s}/I_{n,s},
\]
where \(I_{n,s}\) is generated by
\begin{equation}\label{eq:BLM-relations}
\begin{gathered}
   \alpha_i\alpha_{i+1}\alpha_{i+2}\alpha_{i+3}
   \quad(1\leq i\leq n-4),
   \qquad
   \alpha_0\alpha_1\alpha_2,
   \qquad
   \gamma^2,\\
   \gamma\beta_j,
   \qquad
   \beta_j\alpha_{s+j-1}
   \quad(1\leq j\leq4).
\end{gathered}
\end{equation}
This is the presentation of \cite[Example~4.22]{BLM}, with the parameter denoted by \(k\) there renamed \(s\) here, since \(k\) is reserved for the delooping degree.  Put \(A=A_{n,s}\), \(P_i=e_iA\), and denote the corresponding simple right modules by \(S_i\).  In every vertical display below, composition factors are listed from top to socle.  The indecomposable projective right \(A\)-modules are the following:
\[
P_0=
\begin{matrix}
	& & \multicolumn{2}{c}{\strut 0} & & \\
	s+2 & s & 0 & 1 & s+1 & s+3 \\
	& & 1 & 2 & & \\
	& & 2 & & &
\end{matrix},
\qquad
P_i=
\begin{matrix}
 i\\[-0.2ex]
 i+1\\[-0.2ex]
 i+2\\[-0.2ex]
 i+3
\end{matrix}
\quad(1\leq i\leq n-3),
\]
\[
P_{n-2}=
\begin{matrix}
 n-2\\[-0.2ex]
 n-1\\[-0.2ex]
 n
\end{matrix},
\qquad
P_{n-1}=
\begin{matrix}
 n-1\\[-0.2ex]
 n
\end{matrix},
\qquad
P_n=n.
\]
If some labels in the diagram of \(P_0\) coincide (as can happen for \(s=1,2\)), the corresponding positions represent distinct composition factors.  In particular,
\[
   \dim_K P_0=10,
   \qquad
   \dim_K A=10+4(n-3)+3+2+1=4n+4.
\]
The uniserial modules used repeatedly below are
\[
   T=\gamma A=
   \begin{matrix}
      0\\[-0.2ex]
      1\\[-0.2ex]
      2
   \end{matrix},
   \qquad
   U_j=
   \begin{matrix}
      j\\[-0.2ex]
      j+1
   \end{matrix}
   \quad(1\leq j<n).
\]
Put
\[
   r=\Bigl\lfloor\frac{s}{2}\Bigr\rfloor,
   \qquad m=r+1,
   \qquad d=\Bigl\lceil\frac{n}{2}\Bigr\rceil.
\]

\begin{theorem}\label{thm:BLM-exact}
For every \(k\geq1\),
\[
\begin{aligned}
   k\text{-}\ddell A
   &=\findim A^{\op}
   =\Findim A^{\op}
   =\Bigl\lfloor\frac{s}{2}\Bigr\rfloor+1,\\
   k\text{-}\sddell A
   &=k\text{-}\dell A
   =\Bigl\lceil\frac{n}{2}\Bigr\rceil.
\end{aligned}
\]
\end{theorem}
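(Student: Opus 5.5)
The plan is to sandwich all the derived quantities between a lower bound coming from \(\findim A^{\op}\) and upper bounds obtained from explicit witnesses. Concretely, I will prove four things, writing \(m=r+1\) and \(d=\lceil n/2\rceil\) as above:
\begin{enumerate}[label=\textup{(\alph*)}]
\item \(\findim A^{\op}\geq m\);
\item \(k\text{-}\ddell S_i\leq m\) for every simple \(S_i\) and every \(k\geq1\);
\item \(k\text{-}\dell S_i\leq d\) for every \(i\) and \(k\);
\item \(\sddell S_1\geq d\) (or an analogous simple near the start of the long chain).
\end{enumerate}
Given these, the chain
\[
m\leq\findim A^{\op}\leq\Findim A^{\op}\leq\ddell A\leq k\text{-}\ddell A\leq m
\]
follows from \eqref{eq:basic-bounds} and \eqref{eq:degree-monotonicity}. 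Similarly,
\[
d\leq\sddell A\leq k\text{-}\sddell A\leq k\text{-}\dell A\leq d
\]
follows from \eqref{eq:degree-monotonicity}.

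\textbf{Syzygy bookkeeping.} I first record syzygies along the chain. We have \(\Omega S_i=U_{i+1}\)-type uniserials of length \(3\), \(\Omega(\text{length }3\text{ at }i)=S_{i+3}\), and \(\Omega(U_j)\) is uniserial of length \(2\) starting at \(j+2\). Alternating lengths \(1,3\) and \(2,2\) (the truncation \(4\)) give periodicity of step \(4\) in indices per two syzygies. Near \(n\) everything becomes projective.

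At vertex \(0\), we have \(\rad P_0=S_{s+2}\oplus S_s\oplus T\oplus U_1\oplus S_{s+1}\oplus S_{s+3}\). Moreover \(\Omega T=\gamma\)-kernel contributions, and \(T\cong\gamma A\) with \(\Omega T\) computed from \(P_0\to T\), \(e_0\mapsto\gamma\).

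\textbf{Step (c).} For the upper bound on \(\dell\), I will use that simples \(S_{s},\dots,S_{s+3}\) and \(T,U_1\) are torsionless. A simple \(S_i\) whose index parity lets its syzygy chain hit one of these, or run off to projectives, within \(\lceil n/2\rceil\) steps gives \(\Omega^{d}S_i\) zero or a syzygy summand. This uses \cref{lem:syzygy-shift} to pass to all \(k\), since torsionless modules in syzygies of \(P_0\) reappear as arbitrary-order syzygies (they are summands of \(\Omega^j\) of suitable simples).

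\textbf{Step (b).} This is the key construction: for \(S_i\) with \(i\) far from \(s\), I will resolve \(S_i\) by a short sequence \(0\to C_1\to C_0\to S_i\to0\) with \(C_0\) torsionless, for instance a uniserial quotient embedded in \(P_0\) via a \(\beta_j\) path. I will iterate such sequences so that the remaining terms become infinitely deloopable after at most \(m\) steps, mimicking the proof of \cref{thm:ordinary-comparison}. The count \(r=\lfloor s/2\rfloor\) arises because the chain from \(1\) to \(s\) reaches the \(\beta\)-targets after \(\lfloor s/2\rfloor\) double steps.

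\textbf{Step (a).} Here I will exhibit a left module (a right \(A^{\op}\)-module) of projective dimension \(m\), built from the left simple at \(0\) or at \(s\), via the left projectives \(Ae_i\).

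\textbf{Step (d).} This is the main obstacle. I expect to need an argument that any embedding \(S_i\hookrightarrow N\) forces a syzygy obstruction. My first attempt will be to show that the relevant syzygies \(\Omega^jN\) restricted to the chain part contain \(\Omega^j S_i\)-type uniserials that are not summands of higher syzygies for \(j<d\). I will use \cref{lem:stable-actual} and Krull--Schmidt, as in \cref{thm:eight-dimensional}, to rule out actual summands.
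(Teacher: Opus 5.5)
\textbf{Gap in step (d).} Your overall sandwich is the paper's. However, step (d), which carries the entire sub-derived lower bound, targets the wrong simple and lacks its key idea. The claim \(\sddell S_1\geq d\) is false. Since \(\Omega T\cong T\oplus S_s\oplus\cdots\oplus S_{s+3}\) and \(\Omega^2S_i\cong S_{i+4}\) for \(i\geq1\), every \(S_i\) with \(i\geq s\) is infinitely deloopable, and \(\dell S_i\leq 2\lceil (s-i)/4\rceil\leq m\) for \(1\leq i<s\). Hence \(\sddell S_i\leq\dell S_i\leq m\) for all \(i\geq1\); for \(s=1\), \(S_1=\beta_1A\) is even torsionless. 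On the other hand, \(n\geq s+4\) forces \(d\geq m+1\). So only \(S_0\) can carry the value \(d\).

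For \(S_0\), your proposed mechanism, that \(\Omega^jN\) contains ``\(\Omega^jS_i\)-type'' uniserials for every overmodule \(N\), is not an argument. \(N\) is arbitrary, so a specific obstruction has to be shown to be forced. The paper proves that every embedding \(S_0\hookrightarrow M\) makes \(U_1\) a direct summand of \(L=\Omega M\):
\begin{enumerate}[label=\textup{(\roman*)}]
\item lift a socle element spanning \(S_0\) to \(y\in Pe_0\), so that \(y\alpha_0\in L\);
\item right multiplication by \(\alpha_0\alpha_1\) is injective on \(Pe_0\), and \((\rad P)e_1\alpha_1\alpha_2=0\);
\item if \(U_1\) were absent, one would get \(Le_1\alpha_1=Le_0\alpha_0\alpha_1\), and injectivity then forces \(y\in L\), a contradiction.
\end{enumerate}
This is combined with a bookkeeping fact: a nonprojective summand \(U_j\) of a \(t\)-th syzygy has \(j\geq 2t-1\), because at later stages such a summand can only arise as \(\Omega U_{j-2}\cong U_j\). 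Consequently the summand \(U_{2t-1}\) of \(\Omega^tM\) cannot be a stable retract of a \((t+1)\)-st syzygy for \(1\leq t<d\).

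\textbf{Effects on (a), (b), (c).} The same misidentification distorts the other steps.

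In (c) you assert that \(U_1\), like \(T\) and \(S_s,\dots,S_{s+3}\), reappears in syzygies of every order. It does not. \(U_1\) is exactly the summand of \(\Omega S_0\) that is not infinitely deloopable; if it were, you would get \(\dell S_0\leq1<d\). The bound \(k\text{-}\dell S_0\leq d\) comes instead from \(\Omega^{d-1}U_1\) being projective (\(P_{n-1}\) or \(P_n\)).

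In (b), \(S_0\) is the only simple that needs a genuine derived witness. Imitating the proof of the ordinary comparison theorem bounds \(\ddell S_0\) only by \(\dell N\) for an overmodule \(N\supseteq S_0\), and that is at least \(d>m\) by (d) itself. Iterating is the right instinct, but the witness must be the specific long sequence \(0\to S_{2r+1}\to W\to P_{2r-3}\to\cdots\to P_1\to T\to S_0\to0\) of length \(r+1\). It follows the kernels \(U_1,U_3,\dots,U_{2r-1}\) and cuts the chain with the length-three uniserial \(W\) with top \(2r-1\) once \(2r+1\geq s\). When \(s=1\) a separate witness through \(V_0\) is needed.

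For (a), neither left simple you mention has finite projective dimension. \(\rad(Ae_0)=K\gamma\) is again the simple at \(0\). The simple at \(s\) has \(K\beta_1\), which is again the simple at \(0\), as a syzygy summand. The paper uses \(Ae_{2r+2}/A\alpha_{2r-1}\alpha_{2r}\alpha_{2r+1}\), of projective dimension \(r+1\), and \(Ae_1/A\alpha_0\) when \(s=1\).
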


\subsection{Infinitely deloopable terms and the derived upper bound}

Translating the basic syzygy calculations in \cite[Example~4.22]{BLM} into the present notation gives
\begin{equation}\label{eq:BLM-basic-syzygies}
 \Omega S_0=T\oplus U_1\oplus\bigoplus_{j=s}^{s+3}S_j,
 \qquad
 \Omega T=T\oplus\bigoplus_{j=s}^{s+3}S_j.
\end{equation}
Thus \(T\) and \(S_s,S_{s+1},S_{s+2},S_{s+3}\) are direct summands of syzygies of every positive order.  For \(i+4\leq n\),
\begin{equation}\label{eq:BLM-shift}
   \Omega^2S_i=S_{i+4}\qquad(i\geq1).
\end{equation}
It follows that every \(S_i\) with \(i\geq s\) is infinitely deloopable.  If \(1\leq i<s\), choose \(a=\lceil(s-i)/4\rceil\).  Then \(i+4a\in\{s,s+1,s+2,s+3\}\), and hence
\begin{equation}\label{eq:BLM-positive-bound}
   k\text{-}\dell S_i
   \leq2\Bigl\lceil\frac{s-i}{4}\Bigr\rceil
   \leq\Bigl\lfloor\frac{s}{2}\Bigr\rfloor+1=m.
\end{equation}

Assume first that \(s\geq2\), so \(r\geq1\).  Put
\[
   W=
   \begin{matrix}
      2r-1\\[-0.2ex]
      2r\\[-0.2ex]
      2r+1
   \end{matrix}.
\]
If \(r\geq2\), there is an exact sequence
\begin{equation}\label{eq:BLM-derived-resolution}
0\longrightarrow S_{2r+1}\longrightarrow W
\longrightarrow P_{2r-3}\longrightarrow\cdots\longrightarrow P_1
\longrightarrow T\longrightarrow S_0\longrightarrow0,
\end{equation}
where the projective indices decrease by two.  For \(r=1\), the corresponding sequence is
\[
0\longrightarrow S_3\longrightarrow W
\longrightarrow T\longrightarrow S_0\longrightarrow0,
\qquad
W=
\begin{matrix}
   1\\[-0.2ex]
   2\\[-0.2ex]
   3
\end{matrix}.
\]
More explicitly, the terms in positions \(0,\ldots,r+1\) are
\[
 C_0=T,
 \qquad
 C_j=P_{2j-1}\quad(1\leq j<r),
 \qquad
 C_r=W,
 \qquad
 C_{r+1}=S_{2r+1}.
\]
The map \(T\to S_0\) is the top quotient.  Each subsequent map sends a top generator onto a generator of the preceding length-two kernel; the kernel of \(W\to U_{2r-1}\) is \(S_{2r+1}\).  Hence the displayed sequences are exact.

Now \(\Omega W=S_{2r+2}\), while \(2r+1\geq s\).  Therefore both the leftmost simple and \(\Omega W\) are infinitely deloopable.  The displayed sequence gives
\[
   k\text{-}\ddell S_0\leq m.
\]
When \(s=1\), let \(V_0\) be the length-two module along the arrow \(\alpha_0\):
\[
   V_0=
   \begin{matrix}
      0\\[-0.2ex]
      1
   \end{matrix}.
\]
Use the exact sequence
\[
   0\longrightarrow S_1\longrightarrow V_0
   \longrightarrow S_0\longrightarrow0.
\]
A direct calculation gives
\[
   \Omega V_0\simeq
   T\oplus S_1\oplus S_2^{(2)}\oplus S_3\oplus S_4.
\]
Hence \(k\text{-}\dell V_0\leq1\), while \((k+1)\text{-}\dell S_1=0\).  Consequently
\begin{equation}\label{eq:BLM-ddell-upper}
   k\text{-}\ddell A\leq m.
\end{equation}

\subsection{The matching opposite projective dimension}

We now construct a finite-dimensional left \(A\)-module of projective dimension \(m\).  All maps below are given by right multiplication by the indicated paths and are homomorphisms of left modules.

For \(s\geq2\), let
\[
   p_r=\alpha_{2r-1}\alpha_{2r}\alpha_{2r+1},
   \qquad
   L_r=Ap_r.
\]
Since \(2r\leq s\), every arrow \(\beta_j\) ends at a vertex at least \(2r\); in particular, no \(\beta_j\) ends at a vertex \(\leq2r-1\).  Thus the kernels in the recursive resolutions below come entirely from the linear \(\alpha\)-chain.
For \(r=1\), the projective cover \(Ae_1\to L_1\) has kernel \(A\alpha_0\simeq Ae_0\), so \(\pd L_1=1\).  For \(r=2\), the cover \(Ae_3\to L_2\) has kernel \(A\alpha_2\), whose cover \(Ae_2\to A\alpha_2\) has kernel \(A(\alpha_0\alpha_1)\simeq Ae_0\); hence \(\pd L_2=2\).  For \(r\geq3\), there are exact sequences
\[
 0\longrightarrow A\alpha_{2r-2}\longrightarrow Ae_{2r-1}
 \longrightarrow L_r\longrightarrow0,
\]
\[
 0\longrightarrow L_{r-2}\longrightarrow Ae_{2r-2}
 \longrightarrow A\alpha_{2r-2}\longrightarrow0.
\]
All kernels are nonzero radical submodules, so the steps are minimal.  Induction gives \(\pd L_r=r\).  The quotient
\[
   N_r=Ae_{2r+2}/L_r
\]
has projective dimension \(r+1=m\).  For \(s=1\), the quotient \(Ae_1/A\alpha_0\) has projective dimension one.  Combining these lower bounds with \eqref{eq:basic-bounds}, \eqref{eq:degree-monotonicity}, and \eqref{eq:BLM-ddell-upper} gives
\begin{equation}\label{eq:BLM-squeeze}
 m\leq\findim A^{\op}\leq\Findim A^{\op}
 \leq\ddell A\leq k\text{-}\ddell A\leq m.
\end{equation}
This proves the first assertion of \cref{thm:BLM-exact}.

\subsection{Every overmodule of \texorpdfstring{$S_0$}{S0}}

The sub-derived lower bound requires control of every embedding \(S_0\hookrightarrow M\).

\begin{lemma}\label{lem:BLM-syzygy-types}
Every first syzygy is a module over the linear Nakayama quotient
\[
   A/(\gamma,\beta_1,\ldots,\beta_4).
\]
If a nonprojective length-two uniserial module \(U_j\) occurs as a direct summand of a \(t\)-th syzygy, \(t\geq1\), then \(j\geq2t-1\).
\end{lemma}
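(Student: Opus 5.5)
Our plan is to first identify the radical of $A$ and see which pieces of it can sit inside a syzygy, then prove the bound on $j$ by induction on $t$.

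\textbf{First assertion.} A first syzygy is, up to projective summands, a submodule of $\rad P$ for a projective $P$. So it suffices to show that $\gamma$ and every $\beta_j$ annihilate $\rad A$, i.e.\ $(\rad A)\gamma=0=(\rad A)\beta_j$.

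The only arrows ending at vertex $0$ are $\gamma$ itself. So a path of positive length ending at $0$ is a power of $\gamma$, and $\gamma^2=0$ gives $(\rad A)\gamma=0$ and $(\rad A)\beta_j\subseteq\gamma\beta_jA=0$.

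A first syzygy $U\subseteq\rad P\oplus(\text{projective})$ is then killed by $\gamma,\beta_j$ up to projective summands. The exception is projective summands isomorphic to $P_0$, which we exclude by working with minimal syzygies: a minimal first syzygy lies in $\rad P$ and has no projective summands by Krull--Schmidt. Hence a minimal first syzygy is a module over the linear Nakayama quotient $\bar A=A/(\gamma,\beta_1,\ldots,\beta_4)$. This is the linear $A_{n+1}$ quiver with relations $\alpha_0\alpha_1\alpha_2$ and all $\alpha_i\alpha_{i+1}\alpha_{i+2}\alpha_{i+3}$.

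\textbf{Second assertion: base case and setup.} We induct on $t$. The case $t=1$ is vacuous, since $j\geq1$ always holds. For the inductive step, let $X$ be a minimal $t$-th syzygy with $t\geq2$, so $X=\Omega Y$ with $Y$ a minimal $(t-1)$-th syzygy. By the first assertion, $Y$ is a $\bar A$-module, hence a direct sum of uniserials over the Nakayama algebra. Since $\Omega$ is additive, each summand of $X$ comes from the syzygy of an indecomposable summand of $Y$. These summands are uniserials $V$ with top $i$ and length $\ell$, where $\ell\leq3$ if $i=0$ and $\ell\leq4$ otherwise. Their syzygies must be computed over $A$, not $\bar A$.

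\textbf{Computing the syzygies.}
\begin{itemize}
\item For $i\geq1$: $P_i$ is uniserial of length $\min(4,n-i+1)$. So $\Omega V$ is the uniserial submodule starting at $i+\ell$, of length $4-\ell$ (or truncated near $n$).
\item For $i=0$: $\Omega V$ contains $T$, $U_1$ and the $S_{s+j-1}$, as in \eqref{eq:BLM-basic-syzygies}, together with the tails along $\alpha_0$.
\end{itemize}
The key bookkeeping is this: a length-two summand $U_j$ of $\Omega V$ has $j=i+\ell$ with $\ell=2$ and $i\geq1$ (from $V=U_i$ itself), or arises from $V$ with top $0$ or from $\alpha_0$-tails, which gives only $U_1$.

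\textbf{Main obstacle: which modules can appear in $Y$.} We must show that $U_1$, and short uniserials at small vertices, occur in $(t-1)$-th syzygies only for small $t$. So the induction should be strengthened to an invariant on all uniserial summands of a $t$-th syzygy. Concretely, we aim for the following: every nonprojective summand of a minimal $t$-th syzygy is either one of $T,S_{s},\ldots,S_{s+3}$, or a uniserial with top $i$ satisfying
\[
i+\text{length}\geq 2t.
\]
This handles the fact that $\Omega S_i=U_{i+1}$-type pieces shift the top by $1$ while the length drops, and that $\Omega U_i$ has top $i+2$.

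Checking the propagation requires care:
\begin{itemize}
\item For $V=U_i$, the syzygy $\Omega U_i$ has top $i+2$ and length $2$, so $i+4\geq 2(t+1)$ follows from $i+2\geq2t$.
\item For $V$ with top $i$ and length $\ell$, the syzygy has top $i+\ell\geq 2t$ and length $4-\ell$, so its sum is $i+4\geq 2t+2$ whenever $i\geq\ell$. The cases with $i<\ell$ must be checked separately.
\item The summands $T$ and $S_{s+j-1}$ must be treated as exceptional, since they are infinitely deloopable. Their syzygies contain only $T$ and simples, not $U_j$, except through $\Omega S_0\ni U_1$. Since $S_0$ is not a summand of any syzygy, this exception never arises.
\end{itemize}

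Finally, the desired bound is the special case of the invariant for length-two modules: $j+2\geq 2t$, hence $j\geq 2t-1$ after using the parity and truncation near $n$. We expect the real difficulty to be verifying the invariant in these small boundary cases near vertices $0,1$ and near $n$.
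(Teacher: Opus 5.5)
Your first assertion is fine. So is your bookkeeping: a nonprojective $U_j$ can be a summand of $\Omega V$, for $V$ an indecomposable module over the Nakayama quotient, only when $V\cong U_{j-2}$ or $V\cong S_0$. The gap is the strengthened invariant you then build the induction on, because it is false.

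\textbf{Why the invariant fails.} The exceptional class $\{T,S_s,\ldots,S_{s+3}\}$ is not closed under $\Omega$. Your assertion that the syzygies of these modules contain only $T$ and simples is true for $T$ but false for the simples. For example, $\Omega S_s=\rad P_s$ is the length-three uniserial with composition factors $s+1,s+2,s+3$, and it is nonprojective since $n\geq s+4$. From $\Omega T=T\oplus\bigoplus_j S_{s+j-1}$, the module $\rad P_s$ is a summand of $\Omega^tT$ for every $t\geq2$. It is not in your exceptional list, and its top plus length is $s+4$, which is $<2t$ once $t$ is large.

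\textbf{Further problems.} Your propagation step also breaks for length-three modules: $i+3\geq 2t$ gives only $i+4\geq 2t+1$ for $\Omega V=S_{i+3}$, not $2t+2$. Even if the invariant held, the length-two case yields only $j+2\geq 2t$, i.e.\ $j\geq 2t-2$. Excluding $j=2t-2$ is not a parity matter: $U_{2t-1}$ is a summand of $\Omega^tS_0$, while $\Omega^tU_2\cong U_{2t+2}$ (as long as these are nonprojective), so both parities of $j$ occur.

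\textbf{The fix.} No strengthening is needed; your own bookkeeping already closes the induction. The module $S_0$ is not torsionless, since $\soc A$ has no $S_0$-summand. Equivalently, $(\rad P)e_0$ is spanned by the $\gamma$-coordinates, on which right multiplication by $\alpha_0$ is injective. Hence $S_0$ is never a summand of a syzygy. So the only possible predecessor of a nonprojective $U_j$ in a $t$-th syzygy, $t\geq2$, is a summand $U_{j-2}$ of the preceding $(t-1)$-th syzygy. Inducting on the length-two summands alone gives $j-2\geq 2t-3$, i.e.\ $j\geq 2t-1$.

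This is exactly the paper's argument. The descendants of $T$ (namely $T$, simples at vertices $\geq1$, and length-three uniserials) never need to be controlled, because none of them has a nonprojective length-two syzygy.
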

\begin{proof}
The radical of every indecomposable projective is killed by \(\gamma\) and the \(\beta_j\).  Hence first syzygies decompose into uniserial modules supported on consecutive vertices of the linear quotient.  To justify the required summand reduction, let \(X\) be uniserial and let
\[
   f=(f_1,\ldots,f_q):X\longrightarrow\bigoplus_{j=1}^qY_j
\]
be an embedding into a finite direct sum of uniserial modules.  The submodules \(\ker f_j\) of \(X\) are linearly ordered.  Since their intersection is \(\ker f=0\), the smallest one is zero; hence one coordinate map \(f_j\) is injective.  Thus each indecomposable first-syzygy summand embeds in a uniserial direct summand of the radical of a projective and is therefore a suffix of that uniserial summand.

The nonprojective suffixes that occur are \(T\), simple modules, and uniserial modules of lengths two or three.  Among these types, only a length-two module can have a nonprojective length-two syzygy, and
\[
   \Omega U_j\cong U_{j+2}
   \qquad(1\leq j\leq n-3).
\]
Length-three uniserial modules whose tops are supported at vertices at least \(1\) have simple syzygies; simple modules supported at vertices at least \(1\) have length-three or projective syzygies; and \(\Omega T\) has only \(T\) and simple summands.  The remaining boundary terms are projective or have simple syzygies.  The assertion now follows by induction: at the first-syzygy stage every nonprojective length-two summand is some \(U_j\) with \(j\geq1\); and if \(U_j\) occurs one stage later, then its only possible nonprojective length-two predecessor is \(U_{j-2}\), so the lower bound on the index increases by two.
\end{proof}

\begin{lemma}\label{lem:BLM-forced-U1}
If \(S_0\hookrightarrow M\), then \(U_1=
\begin{matrix}
	1\\[-0.2ex]
	2
\end{matrix}\) is a direct summand of \(\Omega M\).
\end{lemma}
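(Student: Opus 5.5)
The plan is to work inside a projective cover \(p:P\twoheadrightarrow M\), with \(\Omega M=\ker p\subseteq\rad P\). The aim is to find a copy of \(U_1\) inside \(\Omega M\) together with an explicit retraction \(\Omega M\to U_1\). The retraction will be a coordinate projection of \(\rad P\), using the decomposition \(\rad P_0=T\oplus U_1\oplus\bigoplus_{j=s}^{s+3}S_j\). Here \(U_1=\alpha_0A=\langle\alpha_0,\alpha_0\alpha_1\rangle\), since \(\alpha_0\alpha_1\alpha_2=0\). This decomposition is the same computation that gives \eqref{eq:BLM-basic-syzygies}.

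Let \(x\in M\) span the image of \(S_0\). Then \(x=xe_0\) and \(x\,\rad A=0\). Choose a lift \(y\in Pe_0\) with \(p(y)=x\). The only indecomposable projective containing \(S_0\) as a composition factor is \(P_0\), and \((\rad P_0)e_0=K\gamma\). Hence \(y=w+w'\gamma\), where \(w\) and \(w'\) lie in the span \(E\) of the top generators of the \(P_0\)-summands of \(P\). There are two cases.

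\emph{Case 1: \(w\neq0\).} Then \(y\notin\rad P\) and \(y=ye_0\), so \(yA\cong P_0\) is a direct summand of \(P\). After an automorphism of \(P\) we may assume that \(y=e\) is a coordinate generator, with \(P=eA\oplus P'\). Since \(x\alpha_0=0\), we get \(e\alpha_0\in\Omega M\). Thus \(e\alpha_0A\cong U_1\) lies in \(\Omega M\). Define \(\rho\) to be the composite of the projection \(\rad P\to\rad(eA)\) with the projection \(\rad(eA)\to e\alpha_0A\) coming from the decomposition of \(\rad P_0\) above. Then \(\rho\) restricts to the identity on \(e\alpha_0A\), so \(U_1\) is a direct summand of \(\Omega M\).

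\emph{Case 2: \(w=0\).} Then \(x=p(w'\gamma)\) with \(w'\in E\setminus\{0\}\). Consider the subspace
\[
V=\{v\in E\mid p(v\gamma)=0\}.
\]
It does not contain \(w'\). Choose a basis of \(E\) consisting of \(e:=w'\) followed by a basis of \(V\), extended arbitrarily. Realize this basis as coordinate generators by an automorphism of \(P\). Because \(x\alpha_0=0\), we have \(e\gamma\alpha_0\in\Omega M\), so \(e\gamma\alpha_0A=\rad(e\gamma A)\cong U_1\) lies in \(\Omega M\). Now let \(\rho\) be the projection \(\rad P\to\rad(eA)\to e\gamma A\cong T\). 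The key claim is that \(\rho(\Omega M)\subseteq\rad(e\gamma A)\).

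The claim needs care only at vertex \(0\), since the vertex-\(1\) and vertex-\(2\) parts of \(T\) are already in the radical. Suppose \(u\in(\Omega M)e_0\). Since \(u\in\rad P\), it has the form \(u=v\gamma\) with \(v\in E\) and \(p(v\gamma)=0\), so \(v\in V\). By the choice of basis, the \(e\)-coordinate of \(v\) is zero, and hence \(\rho(u)=0\). Therefore \(\rho\) maps \(\Omega M\) into \(\rad(e\gamma A)\cong U_1\) and restricts to the identity on \(e\gamma\alpha_0A\subseteq\Omega M\). This gives a retraction, and \(U_1\) is a direct summand of \(\Omega M\).

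I expect Case~2 to be the main obstacle, where \(S_0\) sits in the radical of \(M\). There the obvious copy \(e\alpha_0A\) need not lie in \(\Omega M\). The right substitute is \(\gamma\alpha_0A\subseteq T\), and the adapted basis of \(E\) is exactly what makes the retraction exist. The remaining verifications are routine in this explicit monomial algebra: that \((\rad P_0)e_0=K\gamma\), that no \(P_i\) with \(i\geq1\) involves vertex \(0\), and the splitting \(\rad P_0=T\oplus U_1\oplus\bigoplus_j S_j\).
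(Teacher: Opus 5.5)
Your proof is correct, and it takes a genuinely different route from the paper's. The paper argues by contradiction. It invokes \cref{lem:BLM-syzygy-types} to decompose \(L=\Omega M\) into uniserial modules over the linear Nakayama quotient. It then observes that, without a \(U_1\)-summand, one would have \(Le_1\alpha_1=Le_0\alpha_0\alpha_1\). Finally, injectivity of right multiplication by \(\alpha_0\alpha_1\) on \(Pe_0\) forces the lift \(y\) into \(L\), contradicting \(x\neq0\). You instead argue directly, splitting according to whether the lift has a nonzero top component, i.e.\ whether the copy of \(S_0\) lies outside \(\rad M\). In each case you exhibit the copy of \(U_1\) explicitly: \(y\alpha_0A\) in Case 1 and \(\rad(e\gamma A)\) in Case 2, both generated by \(y\alpha_0\) after normalizing coordinates. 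You pair it with an explicit retraction obtained by projecting \(\rad P\) onto a summand of \(\rad P_0=\gamma A\oplus\alpha_0A\oplus\bigoplus_j\beta_jA\). In Case 2 the adapted basis of the top space \(E\) is exactly what makes this projection kill \((\Omega M)e_0\), and the check is sound because \((\Omega M)e_0\subseteq E\gamma\). Your route is self-contained: it needs neither the uniserial decomposition of first syzygies nor the summand-reduction step of \cref{lem:BLM-syzygy-types}. It uses only the monomial splitting of \(\rad P_0\) and the fact that vertex \(0\) occurs in \(P\) only through \(e_0\) and \(\gamma\) on the \(P_0\)-summands. The paper's route avoids the case split and is shorter once \cref{lem:BLM-syzygy-types} is available, and that lemma is needed anyway for \cref{lem:BLM-overmodule-lower}.
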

\begin{proof}
Let \(f:P\twoheadrightarrow M\) be a projective cover and put \(L=\ker f\subseteq\rad P\).  Choose a nonzero socle element \(x\in Me_0\) spanning the given copy of \(S_0\), and lift it to \(y\in Pe_0\).  Then
\[
   y\alpha_0\in Le_1,
   \qquad
   y\alpha_0\alpha_1\in Le_2.
\]
Multiplication by \(\alpha_0\alpha_1\) is injective on \(Pe_0\): in each \(P_0\)-coordinate it sends the independent vectors \(e_0,\gamma\) to the independent paths \(\alpha_0\alpha_1,\gamma\alpha_0\alpha_1\).  Moreover,
\[
   (\rad P)e_1\alpha_1\alpha_2=0.
\]
Decompose \(L\) into indecomposable uniserial modules supported on consecutive vertices, using \cref{lem:BLM-syzygy-types}.  If \(U_1\) were absent, every contribution to \(Le_1\alpha_1\) would come from a summand beginning at vertex \(0\); hence
\[
   Le_1\alpha_1=Le_0\alpha_0\alpha_1.
\]
Thus \(y\alpha_0\alpha_1=z\alpha_0\alpha_1\) for some \(z\in Le_0\).  Injectivity gives \(y=z\), contradicting \(f(y)=x\neq0\).
\end{proof}

\begin{lemma}\label{lem:BLM-overmodule-lower}
If \(S_0\hookrightarrow M\), then
\[
   \dell M\geq d=\Bigl\lceil\frac{n}{2}\Bigr\rceil.
\]
\end{lemma}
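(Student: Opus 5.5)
Suppose \(S_0\hookrightarrow M\) and \(e=\dell M<d\); I will derive a contradiction. The idea is to propagate the forced summand \(U_1\) of \cref{lem:BLM-forced-U1} through syzygies using \(\Omega U_j\cong U_{j+2}\). This produces a length-two uniserial summand of \(\Omega^eM\) whose index is too small to appear in an \((e+1)\)-st syzygy, by \cref{lem:BLM-syzygy-types}.

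\emph{The case \(e=0\).} By \cref{lem:dell-zero}, \(M\) would embed in a finitely generated projective module. Then \(S_0\) would lie in the socle of some projective. From the displayed structure of the indecomposable projectives, no indecomposable projective has \(S_0\) in its socle. For \(P_0\) in particular, \(e_0\) and \(\gamma\) are not annihilated by the radical, because \(\gamma\alpha_0\neq0\). So no projective has \(S_0\) in its socle, and \(e\geq1\).

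\emph{The case \(1\leq e\leq d-1\).} By \cref{lem:BLM-forced-U1}, \(U_1\) is a direct summand of \(\Omega M\). Hence \(\Omega^{e-1}U_1\) is a direct summand of \(\Omega^eM=\Omega^{e-1}(\Omega M)\), up to projective summands. Iterating \(\Omega U_j\cong U_{j+2}\) for \(j\leq n-3\) gives \(\Omega^{e-1}U_1\cong U_{2e-1}\). The index check is
\[
2e-1\leq 2\lceil n/2\rceil-3\leq n-2,
\]
so each intermediate index stays at most \(n-3\). Moreover \(U_{2e-1}\) is not projective, since the only projective length-two uniserial is \(U_{n-1}=P_{n-1}\).

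Now \(\dell M=e\) gives \(\Omega^eM\midst\Omega^{e+1}N\) for some \(N\), so \(U_{2e-1}\midst\Omega^{e+1}N\). By \cref{lem:stable-actual}, \(U_{2e-1}\) is an actual direct summand of \(\Omega^{e+1}N\oplus P\). Since it is indecomposable and nonprojective, Krull--Schmidt makes it a direct summand of \(\Omega^{e+1}N\). Then \cref{lem:BLM-syzygy-types}, applied with \(t=e+1\), yields
\[
2e-1\geq 2(e+1)-1=2e+1,
\]
which is a contradiction. Hence \(\dell M\geq d\).

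\emph{Main obstacle.} The delicate points are the bookkeeping at the right boundary and the \(e=0\) socle check; \cref{lem:BLM-syzygy-types} does the real work. At the boundary, one must confirm that the chain \(U_1\mapsto U_3\mapsto\cdots\mapsto U_{2e-1}\) never reaches an index where \(\Omega U_j\cong U_{j+2}\) fails or where \(U_j\) becomes projective. The estimate \(2e-1\leq n-2\) above handles this, and it is exactly where the value \(d=\lceil n/2\rceil\) enters.
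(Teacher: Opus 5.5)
Your proof is correct and follows the same route as the paper. You rule out \(\dell M=0\) because \(S_0\) does not occur in the socle of any projective. You then push the forced summand \(U_1\) of \(\Omega M\) to the nonprojective summand \(U_{2e-1}\) of \(\Omega^eM\), and contradict \cref{lem:BLM-syzygy-types} at exponent \(e+1\) using \cref{lem:stable-actual} and Krull--Schmidt. Your explicit check that \(2e-1\leq n-2\) is a detail the paper leaves implicit: it ensures that every shift \(\Omega U_j\cong U_{j+2}\) you use has \(j\leq n-3\), and that \(U_{2e-1}\) is not projective.
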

\begin{proof}
The socle of \(A\) has no \(S_0\)-summand, so \(M\) is not torsionless.  For \(1\leq t<d\), \cref{lem:BLM-forced-U1} implies that \(\Omega^tM\) contains the nonprojective summand \(U_{2t-1}\).  By \cref{lem:BLM-syzygy-types}, this module cannot occur in any \((t+1)\)-st syzygy, whose nonprojective length-two summands begin at vertices at least \(2t+1\).  Krull--Schmidt and \cref{lem:stable-actual} therefore exclude the stable retraction required at exponent \(t\).
\end{proof}

Taking the infimum over all modules containing \(S_0\) gives \(\sddell S_0\geq d\).  Conversely, in \eqref{eq:BLM-basic-syzygies} all summands other than \(U_1\) are infinitely deloopable, and their syzygies remain infinitely deloopable.  Using \(\Omega U_j\cong U_{j+2}\) together with the right-boundary syzygies gives
\[
   \Omega^{d-1}U_1\cong
   \begin{cases}
      U_{n-1}=P_{n-1},& n\text{ is even},\\
      S_n=P_n,& n\text{ is odd}.
   \end{cases}
\]
Hence \(\Omega^dS_0\), up to projective summands, is a direct sum of infinitely deloopable modules.  Thus
\[
   k\text{-}\dell S_0\leq d
   \qquad(k\geq1).
\]
Since \(n\geq s+4\), we also have \(m\leq d\).  Together with \eqref{eq:BLM-positive-bound} and \eqref{eq:degree-monotonicity}, this gives
\[
 d\leq\sddell A\leq k\text{-}\sddell A
 \leq k\text{-}\dell A\leq d.
\]
This completes the proof of \cref{thm:BLM-exact}.

\begin{corollary}\label{cor:BLM-gap}
For every integer \(t\geq1\), take \(s=2\) and \(n=2t+4\).  Then \(\dim_KA_{n,s}=8t+20\) and, for every \(k\geq1\),
\[
 k\text{-}\ddell A_{n,s}=\Findim A_{n,s}^{\op}=2,
 \qquad
 k\text{-}\sddell A_{n,s}=k\text{-}\dell A_{n,s}=t+2.
\]
In particular, every positive finite value of
\[
 k\text{-}\sddell A-k\text{-}\ddell A
\]
occurs while both invariants remain finite.
\end{corollary}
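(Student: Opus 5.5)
This is a specialization of \cref{thm:BLM-exact}, so the plan is to substitute \(s=2\) and \(n=2t+4\) and read off every quantity. First I check that the parameters are admissible. Since \(t\geq1\) we have \(s=2\geq1\) and \(n=2t+4\geq6=s+4\), so the hypotheses of \cref{thm:BLM-exact} hold. For the dimension I use the formula \(\dim_KA_{n,s}=4n+4\) recorded with the projectives. That formula is independent of \(s\) once \(n\geq s+4\), and here it gives \(4(2t+4)+4=8t+20\).

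Next I evaluate the two values in \cref{thm:BLM-exact}. With \(s=2\) we get \(\lfloor s/2\rfloor+1=2\). Hence for every \(k\geq1\),
\[
k\text{-}\ddell A_{n,s}=\Findim A_{n,s}^{\op}=2.
\]
With \(n=2t+4\) even we get \(\lceil n/2\rceil=t+2\). Hence \(k\text{-}\sddell A_{n,s}=k\text{-}\dell A_{n,s}=t+2\), again for every \(k\geq1\).

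Subtracting gives
\[
k\text{-}\sddell A_{n,s}-k\text{-}\ddell A_{n,s}=t.
\]
As \(t\) ranges over the positive integers, this realizes every positive finite value, and both invariants stay finite. I treat \(k\) as fixed throughout, or note that the difference equals \(t\) uniformly in \(k\).

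There is no real obstacle here. The only point that needs care is the admissibility check \(n\geq s+4\). This matters because the projective list and the dimension count rely on the interval \(1\leq i\leq n-3\) being nonempty and on the boundary projectives \(P_{n-2},P_{n-1},P_n\) being distinct from the \(\beta\)-targets \(s,\dots,s+3\). Both hold because \(n\geq6\).
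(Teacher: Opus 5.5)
Your argument is correct and matches the paper. The paper gives no separate proof: the corollary is the specialization of \cref{thm:BLM-exact} at \(s=2\), \(n=2t+4\), which is admissible since \(n\geq s+4\), together with \(\dim_KA_{n,s}=4n+4\).

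One harmless slip in your closing remark: for \(t=1\) the boundary vertices \(n-2=4\) and \(n-1=5\) do coincide with the \(\beta\)-targets \(s+2,s+3\). This changes nothing, because every \(\beta_j\) starts at vertex \(0\), so only \(P_0\) involves them.
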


\begin{remark}\label{rem:Guo-suggestion}
The calculation in \cref{thm:BLM-exact} shows that the specific lead mentioned in \cite[Introduction]{GuoSerial} does not produce a counterexample to
\[
   \Findim\Lambda=\ddell(\Lambda^{\op}).
\]
Indeed, throughout the explicitly presented Barrios--Lanzilotta--Mata family one has \(\ddell A_{n,s}-\Findim(A_{n,s}^{\op})=0\).  The general suspicion expressed there is nevertheless correct, as shown in the next section.
\end{remark}

\begin{remark}\label{rem:BLM-published-bound}
Our formulas refer to the bound-quiver presentation in
\cite[Example~4.22, p.~217]{BLM}.  We write \(\kappa\) for the parameter denoted by \(k\) there.  The published version states
\[
   \findim(A_n^{\op})
   \leq
   \left\lfloor\frac{\kappa}{4}\right\rfloor+1
\]
and, in the first case considered in its proof, asserts that
\[
   \pd M
   \leq
   2\left\lceil\frac{\kappa}{4}\right\rceil
\]
whenever \(\pd M<\infty\).

However, when \(s=4\) (corresponding to \(\kappa=4\) in \cite{BLM}) and \(n=10\), the left \(A\)-module
\[
   N=N_2=Ae_6/L_2
\]
has the minimal projective resolution
\[
0\longrightarrow Ae_0
\xrightarrow{\cdot\alpha_0\alpha_1}Ae_2
\xrightarrow{\cdot\alpha_2}Ae_3
\xrightarrow{\cdot\alpha_3\alpha_4\alpha_5}Ae_6
\longrightarrow N\longrightarrow0.
\]
The successive kernels are \(A\alpha_2\subseteq Ae_3\) and
\(A(\alpha_0\alpha_1)\subseteq Ae_2\), in accordance with the
path-annihilator description in \cite[Theorem~I]{HZ}.
Thus \(\pd_A N=3\).  As a right \(A^{\op}\)-module, \(N\) has top supported at vertex \(6\), so it belongs to the first case considered in \cite[Example~4.22]{BLM}, whereas both displayed upper bounds are equal to \(2\).  Thus, for the bound-quiver presentation and path-composition convention printed there, these numerical upper estimates cannot hold as stated.  Nevertheless, the exact formulas in \cref{thm:BLM-exact} confirm the qualitative conclusion of \cite[Example~4.22]{BLM}: for fixed \(s\), the ordinary gap
\[
   \dell A_{n,s}-\Findim(A_{n,s}^{\op})
\]
is unbounded as \(n\to\infty\).
\end{remark}

\section{A small monomial counterexample to the equality}\label{sec:small}

We now give a strict example, disproving the conjecture stated explicitly in \cite[p.~17]{GuoIgusa}.  The monomial version is slightly larger than the corresponding variant with a commutative corner algebra, but its path structure is completely transparent.

Let
\[
   R=K\langle x,y\rangle/(x,y)^3,
   \qquad
   J=\rad R,
\]
and let
\[
   M=R/(yR+xyR)
\]
be a right \(R\)-module.  It has a basis \(m_0,m_1,m_2\) satisfying
\[
   m_0x=m_1,
   \qquad
   m_1x=m_2,
   \qquad
   m_2x=0,
   \qquad
   My=0.
\]
In particular,
\begin{equation}\label{eq:M-obstruction}
   My=0,
   \qquad
   Mx^2\neq0.
\end{equation}
Define the one-point extension
\begin{equation}\label{eq:small-algebra}
   \Lambda=
   \begin{pmatrix}
      K&M\\
      0&R
   \end{pmatrix}.
\end{equation}
Equivalently, \(\Lambda\) is the bound quiver algebra with vertices \(1,2\), an arrow \(a:2\to1\), loops \(x,y\) at vertex \(1\), and monomial relations
\[
   \text{all words of length three in }x,y,
   \qquad ay,
   \qquad axy.
\]
A path basis is
\[
 e_1,e_2,x,y,x^2,xy,yx,y^2,a,ax,ax^2,
\]
so \(\dim_K\Lambda=11\).

Write \(P_i=e_i\Lambda\), and let \(S_i\) be the corresponding simple tops.  We retain vertex \(1\) for the original algebra \(R\) and use vertex \(2\) for the adjoined point.  Modules supported at vertex \(1\) will be identified with right \(R\)-modules.  Then
\begin{equation}\label{eq:small-first-syzygy}
   P_1=R,
   \qquad
   \rad P_2=M,
   \qquad
   \Omega_\Lambda S_2=M.
\end{equation}

\begin{theorem}\label{thm:small-counterexample}
For the eleven-dimensional monomial algebra \(\Lambda\) in \eqref{eq:small-algebra},
\[
   \Findim(\Lambda^{\op})=1<2=\ddell\Lambda=\dell\Lambda.
\]
\end{theorem}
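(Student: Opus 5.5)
The plan is to prove four inequalities separately: \(\dell\Lambda\leq2\), \(\ddell\Lambda\geq2\), \(\Findim(\Lambda^{\op})\leq1\), and \(\Findim(\Lambda^{\op})\geq1\). With \eqref{eq:basic-bounds} these give the chain \(1=\Findim(\Lambda^{\op})<2\leq\ddell\Lambda\leq\dell\Lambda\leq2\).

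\emph{Upper bound \(\dell\Lambda\leq 2\).} By \cref{lem:dell-zero}, \(S_1\cong x^2R\subseteq P_1\) is torsionless, so \(\dell S_1=0\). For \(S_2\), I would compute syzygies directly from \eqref{eq:small-first-syzygy}. Since \(\Omega S_2=M=R/(yR+xyR)\) and \(yR\cap xyR=0\),
\[
\Omega^2S_2=yR+xyR\cong yR\oplus S_1 .
\]
Inside \(R\) one has \(\Omega S_1=J=xR\oplus yR\), and \(\Omega(yR)=J^2\cong S_1^{(4)}\), because the kernel of \(r\mapsto yr\) is \(J^2\). Hence \(yR\) and \(S_1\) each occur as direct summands of syzygies of every positive order. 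In particular \(\Omega^2S_2\) is a stable retract of some \(\Omega^3N\), so \(\dell S_2\leq 2\).

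\emph{Lower bound \(\ddell S_2\geq2\).} This is the main obstacle. A witness of value at most \(1\) is one of two kinds. Either it has length zero, with \(\dell S_2\leq1\), which requires \(M=\Omega S_2\) to be a stable retract of a second syzygy. Or it is a short exact sequence \(0\to C_1\to C_0\to S_2\to0\) with \(\dell C_0\leq1\) and \(2\text{-}\dell C_1=0\), so that \(C_1\) is a stable retract of a second syzygy.

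The key step is a two-step annihilator lemma, reflecting \eqref{eq:M-obstruction}. Let \(U\) be a second syzygy, embedded in \(\rad P'\) as the kernel of a projective cover of a first syzygy \(W\subseteq\rad P\). I would show that every \(u\in Ue_1\) with \(uy=0\) satisfies \(ux^2=0\). The argument traces coordinates through \(\rad P\). In \(P_1\)-coordinates, \(zy=0\) with \(z\in J\) forces \(z\in J^2\), because \(xy\) and \(y^2\) are nonzero. In \(P_2\)-coordinates one uses \(ay=axy=0\) together with the fact that \(f(u)=0\) in \(W\), arguing as in the proof of \cref{thm:eight-dimensional}. By \cref{lem:stable-actual} and Krull--Schmidt the property passes to stable retracts, so \(M\), in which \(m_0y=0\neq m_0x^2\), is excluded. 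This settles the length-zero case.

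For the length-one case, let \(c\in C_0e_2\) map to the top of \(S_2\). Then \(ca\) lies in \(C_1\), and \((ca)y=c(ay)=0\), so the lemma forces \(cax^2=0\). I would then use \(\dell C_0\leq1\), i.e.\ \(\Omega C_0\) is a retract of a second syzygy, to reach a contradiction. The plan is to pass to the projective cover of \(C_0\) and show that the \(P_2\)-component carrying \(c\) yields an element of \(\Omega C_0\) violating the same lemma, namely an element killed by \(y\) but not by \(x^2\). I expect this bookkeeping to be the hardest part of the proof.

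\emph{Bounds on \(\Findim(\Lambda^{\op})\).} For left modules I would use the triangular form: \(\Lambda e_2=K\) is simple projective, and \(\Lambda e_1\) is built from \(R\) and \(M\). First I would show that the big finitistic dimension of the local algebra \(R\) on the relevant side is zero. The argument is that a map between free modules with image in \(J\cdot(\text{free})\) kills the socle part \(J^2\) of its source, and \(J^2\neq 0\) is annihilated by \(J\); so such a map is never injective, even for infinitely generated free modules. Then a minimal-resolution argument for a left \(\Lambda\)-module \(X\) splits off the vertex-\(2\) part and reduces to \(R\), costing at most one step. This gives \(\Findim(\Lambda^{\op})\leq1\). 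For the lower bound I would exhibit a left module of projective dimension exactly \(1\), for instance the cokernel of the non-split monomorphism \(\Lambda e_2\xrightarrow{\cdot a}\Lambda e_1\).
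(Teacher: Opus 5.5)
Your upper bound \(\dell\Lambda\leq2\) is correct. Your two bounds on \(\Findim(\Lambda^{\op})\) are also fine: the direct minimal-resolution argument is a legitimate, more elementary substitute for the paper's appeal to the triangular-matrix estimate of FGR. The gap is in the lower bound \(\ddell S_2\geq2\), in the length-one case, and it has two parts.

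\textbf{The invariant is too weak.} The property \(uy=0\Rightarrow ux^2=0\) cannot produce a contradiction. Take \(C_0=P_2/\soc P_2\) and \(C_1=\rad C_0=\langle a,ax\rangle\).
\begin{itemize}
\item \(\Omega C_0=K\,ax^2\cong S_1\) is a summand of \(\Omega^2S_1\cong S_1^{(8)}\), so \(\dell C_0\leq1\) genuinely holds.
\item \(cax^2=0\), and every element of \(\Omega C_0\) is killed by \(x^2\), so the element of \(\Omega C_0\) you plan to produce does not exist.
\item \(C_1\) itself satisfies your property.
\end{itemize}
So nothing you intend to use is contradicted. What excludes this configuration is that \(C_1\) is not torsionless: \(ay=0\) but \(ax\neq0\).

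The lemma you need is the one-step implication \(zy=0\Rightarrow zx=0\) for torsionless right \(R\)-modules. It follows from \(\{u\in R\mid ux=0\}=J^2=\{u\in R\mid uy=0\}\); your own coordinate computation already gives \(z\in J^2\), hence \(zx=0\). It applies to every second syzygy, because \((\rad\Lambda)e_2=0\) forces first syzygies to be supported at vertex \(1\). Second syzygies therefore lie in \(J^{(b)}\), and no \(P_2\)-coordinates ever arise, so the appeal to the eight-dimensional example is unnecessary. The contradiction then uses two torsionless modules, \(Y\) (the non-\(P_2\) part of \(C_1\)) and \(X=\Omega C_0\):
\begin{itemize}
\item \(m_0y=0\) in \(Y\) forces \(m_0x\in X\);
\item then \((m_0x)y=0\) in \(X\) forces \(m_0x^2=0\), which is false.
\end{itemize}

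\textbf{The property does not pass to stable retracts.} \(P_2\) is zero in the stable category, yet \(a\in P_2\) satisfies \(ay=0\neq ax^2\). In the length-zero case Krull--Schmidt rescues you, since \(M\) is indecomposable and non-projective. In the length-one case, however, \(C_1\simeq Y\oplus P_2^{(t)}\) may have \(P_2\)-summands, and \(ca\) may have components in them. So ``\((ca)y=0\) forces \(cax^2=0\)'' is unjustified.

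The paper removes these summands explicitly:
\begin{itemize}
\item It pulls back along a projective cover \(P_2^{(a)}\oplus P_1^{(b)}\twoheadrightarrow C_0\), normalized so that the first copy of \(P_2\) carries the top of \(S_2\).
\item Comparing vertex-\(2\) parts gives \(t=a-1\).
\item A triangular change of basis, using that the \(P_2^{(t)}\)-block \(h\) is an automorphism, splits off \(P_2^{(t)}\).
\end{itemize}
This yields \(0\to X\to M\oplus R^{(b)}\to Y\to0\) with \(X,Y\) torsionless over \(R\), and the two-step chase above finishes. This is exactly the bookkeeping you deferred, and with the weaker invariant it cannot be completed.
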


\subsection{The opposite big finitistic dimension}

The unique simple right \(R\)-module embeds in \(R\), for example as \(Kx^2\).  Hence \(\dell R=0\), and therefore \eqref{eq:basic-bounds} gives
\[
   \Findim(R^{\op})=0.
\]
After interchanging the two diagonal blocks, \(\Lambda\) is the lower triangular matrix ring
\[
   \begin{pmatrix}R&0\\ M&K\end{pmatrix}.
\]
Hence the left-sided triangular-matrix estimate in \cite[Corollary~4.21]{FGR} yields
\[
   \Findim(\Lambda^{\op})
   \leq \Findim(R^{\op})+\Findim K+1=1.
\]
For the reverse inequality, use the standard description of left \(\Lambda\)-modules as triples \((X,Y,\phi)\), where \(X\) is a \(K\)-module, \(Y\) is a left \(R\)-module, and \(\phi:M\otimes_RY\to X\); see \cite[Section~1]{FGR}.  In this description there is an exact sequence
\begin{equation}\label{eq:small-pd-one}
0\longrightarrow(M,0)
\longrightarrow(M,R,\mathrm{id})
\longrightarrow(0,R,0)
\longrightarrow0.
\end{equation}
The first two terms are projective and the sequence is non-split, because its middle term is the indecomposable projective \(\Lambda e_1\) and both kernel and cokernel are nonzero.  Hence \((0,R,0)\) has projective dimension one, and
\begin{equation}\label{eq:small-Findim}
   \Findim(\Lambda^{\op})=1.
\end{equation}

\subsection{The upper bound}

Put
\[
   T=R/J,
   \qquad
   U=R/J^2.
\]
As right \(R\)-modules,
\begin{equation}\label{eq:small-R-syzygies}
   \Omega_RT=J=xR\oplus yR\simeq U^{2},
   \qquad
   \Omega_RU=J^2\simeq T^{4}.
\end{equation}
Moreover,
\[
   \Omega_RM=yR\oplus xyR\simeq U\oplus T.
\]
Together with \eqref{eq:small-first-syzygy}, this gives
\[
   \Omega_\Lambda^2S_2\simeq U\oplus T,
   \qquad
   \Omega_\Lambda^3S_2\simeq T^{4}\oplus U^{2}.
\]
Thus \(\Omega^2S_2\) is an actual direct summand of \(\Omega^3S_2\), and \(\dell S_2\leq2\).  The other simple embeds in \(P_1=R\), so \(\dell S_1=0\).  Therefore
\begin{equation}\label{eq:small-upper}
   \ddell\Lambda\leq\dell\Lambda\leq2.
\end{equation}

\subsection{The torsionless obstruction}

The kernels of right multiplication by \(x\) and by \(y\) on \(R\) coincide:
\begin{equation}\label{eq:annihilator-equality}
   \{u\in R\mid ux=0\}=J^2=\{u\in R\mid uy=0\}.
\end{equation}
Indeed, write \(u=c+\alpha x+\beta y+w\), with \(w\in J^2\).  Then
\[
 ux=cx+\alpha x^2+\beta yx,
 \qquad
 uy=cy+\alpha xy+\beta y^2,
\]
and the displayed terms are linearly independent in each expression.  Hence each product is zero precisely when \(c=\alpha=\beta=0\).

Consequently, every torsionless right \(R\)-module \(X\) satisfies
\begin{equation}\label{eq:torsionless-implication}
   zy=0\quad\Longrightarrow\quad zx=0
   \qquad(z\in X).
\end{equation}
This immediately gives the key obstruction.

\begin{lemma}\label{lem:no-torsionless-extension}
For every integer \(b\geq0\), there is no exact sequence
\[
   0\longrightarrow X\longrightarrow M\oplus R^{(b)}
   \longrightarrow Y\longrightarrow0
\]
in which both \(X\) and \(Y\) are torsionless right \(R\)-modules.
\end{lemma}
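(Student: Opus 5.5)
The plan is to use the implication \eqref{eq:torsionless-implication} twice, once in the cokernel \(Y\) and once in the kernel \(X\), following a single element of \(M\) through its \(x\)-chain. Suppose, for a contradiction, that such a sequence exists. Write \(\iota:X\to M\oplus R^{(b)}\) for the monomorphism and \(\pi:M\oplus R^{(b)}\to Y\) for the epimorphism. I identify \(X\) with \(\iota(X)=\ker\pi\).

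First I record why \eqref{eq:torsionless-implication} holds for every torsionless \(X\). By \cref{lem:dell-zero}, \(X\) embeds into a free module \(R^{(c)}\). In each coordinate, \eqref{eq:annihilator-equality} says that \(u y=0\) forces \(u x=0\). An embedding preserves the vanishing of these products, so the implication passes to \(X\).

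\emph{Step 1 (the cokernel).} Consider the element \((m_0,0)\in M\oplus R^{(b)}\) and put \(z=\pi(m_0,0)\in Y\). Since \(My=0\) by \eqref{eq:M-obstruction}, we get
\[
   zy=\pi(m_0y,0)=0.
\]
Because \(Y\) is torsionless, \eqref{eq:torsionless-implication} gives
\[
   0=zx=\pi(m_0x,0)=\pi(m_1,0).
\]
Hence \((m_1,0)\in\ker\pi=X\).

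\emph{Step 2 (the kernel).} Now apply the same implication inside \(X\) to the element \((m_1,0)\). Again \(My=0\) gives \((m_1,0)y=0\). Since \(X\) is torsionless, it follows that \((m_1,0)x=0\) in \(X\). Because \(X\) is a submodule of \(M\oplus R^{(b)}\), this product may be computed in the ambient module, giving
\[
   (m_1,0)x=(m_1x,0)=(m_2,0).
\]
But \(m_2\neq0\), equivalently \(Mx^2\neq0\) in \eqref{eq:M-obstruction}. This is a contradiction.

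The only point needing care is the transfer of \eqref{eq:torsionless-implication} from \(R\) to arbitrary torsionless modules, and the observation that products in \(X\) agree with products in \(M\oplus R^{(b)}\). Both are immediate from the embeddings, so I expect no real obstacle. The free summand \(R^{(b)}\) plays no role, because the witness element lives entirely in the \(M\)-component.
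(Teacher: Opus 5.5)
Your proposal is correct and is essentially the paper's argument: push \(m_0\) into \(Y\), use the implication \eqref{eq:torsionless-implication} there to get \(m_0x\in X\), then apply it again inside \(X\) to force \(m_0x^2=0\), contradicting \(Mx^2\neq0\). Your added justification of \eqref{eq:torsionless-implication} is also sound: embed the torsionless module in a free \(R\)-module via \cref{lem:dell-zero} and check coordinatewise.
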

\begin{proof}
Let \(m=m_0\) in the displayed copy of \(M\).  Since \(my=0\), the image \(\overline m\in Y\) satisfies \(\overline m y=0\).  By \eqref{eq:torsionless-implication}, \(\overline m x=0\), so \(mx\in X\).  But \((mx)y=0\); applying \eqref{eq:torsionless-implication} in \(X\) gives \(mx^2=0\), contradicting \eqref{eq:M-obstruction}.
\end{proof}

We now show that every degree-one derived-delooping witness would produce the forbidden sequence in \cref{lem:no-torsionless-extension}.

First note that
\[
   (\rad\Lambda)e_2=0.
\]
Hence every first syzygy is supported at vertex \(1\), and every second syzygy embeds in a direct sum of copies of \(P_1=R\).  Thus every second syzygy is torsionless as an \(R\)-module.

If \(Z\) is a stable summand of a second \(\Lambda\)-syzygy, then \cref{lem:stable-actual} and Krull--Schmidt give
\begin{equation}\label{eq:stable-second-form}
   Z\simeq Z_R\oplus P_2^{(t)},
\end{equation}
where \(Z_R\) is a torsionless right \(R\)-module.  Copies of \(P_1=R\) are absorbed into \(Z_R\).

Suppose, for a contradiction, that \(\ddell_\Lambda S_2\leq1\).  By increasing the witness value to \(1\), if necessary, and taking \(C_1=0\) when the original witness has length zero, we obtain an exact sequence
\begin{equation}\label{eq:degree-one-witness}
   0\longrightarrow C_1\longrightarrow C_0
   \longrightarrow S_2\longrightarrow0
\end{equation}
with
\[
   \dell C_0\leq1,
   \qquad
   2\text{-}\dell C_1=0.
\]
The first condition implies that \(X:=\Omega C_0\) is a stable summand of a second syzygy; since \(X\) is supported at vertex \(1\), it is a torsionless \(R\)-module.  The second condition and \eqref{eq:stable-second-form} give
\[
   C_1\simeq Y\oplus P_2^{(t)},
\]
where \(Y\) is torsionless over \(R\).

Take a projective cover
\[
   P_2^{(a)}\oplus P_1^{(b)}\twoheadrightarrow C_0
\]
for some integers \(a\geq1\) and \(b\geq0\).
Its composite with \(C_0\twoheadrightarrow S_2\) is surjective.  Since
\[
   \Hom_\Lambda(P_1,S_2)=0,
   \qquad
   \Hom_\Lambda(P_2,S_2)=K,
\]
a change of basis among the \(P_2\)-summands makes the composite the standard top projection from the first copy of \(P_2\).  Taking the inverse image of \(C_1\) yields
\begin{equation}\label{eq:pullback-sequence}
0\longrightarrow X\longrightarrow
M\oplus P_2^{(a-1)}\oplus P_1^{(b)}
\longrightarrow Y\oplus P_2^{(t)}
\longrightarrow0.
\end{equation}
At vertex \(2\), this sequence gives an isomorphism \(K^{(a-1)}\to K^{(t)}\), so \(t=a-1\).  Write \(D=M\oplus P_1^{(b)}\).  Since \(\Hom_\Lambda(P_2,Y)=0\), the surjection in \eqref{eq:pullback-sequence} has block form
\[
   \begin{pmatrix}f&0\\ g&h\end{pmatrix}:
   D\oplus P_2^{(t)}\longrightarrow Y\oplus P_2^{(t)}.
\]
If \(t=0\), there are no \(P_2\)-summands to remove.  If \(t>0\), then
\[
   \End_\Lambda(P_2^{(t)})\cong M_t(\End_\Lambda(P_2))\cong M_t(K),
\]
and the component of \(h\) at vertex \(2\) is precisely its defining matrix.  Since that component is invertible, \(h\) is an automorphism.  Precomposing with the automorphism
\[
   (d,p)\longmapsto(d,p-h^{-1}g(d))
\]
reduces the map to \(\mathrm{diag}(f,h)\).  The \(P_2^{(t)}\)-summands split off, leaving an exact sequence
\[
   0\longrightarrow X\longrightarrow M\oplus R^{(b)}
   \longrightarrow Y\longrightarrow0.
\]
This contradicts \cref{lem:no-torsionless-extension}.  Therefore \(\ddell_\Lambda S_2\geq2\).  Together with \eqref{eq:small-Findim} and \eqref{eq:small-upper}, this proves \cref{thm:small-counterexample}.

\begin{proposition}\label{prop:ten-dimensional}
Let
\[
   R_c=K[x,y]/(x,y)^3,
   \qquad
   M_c=R_c/(y),
   \qquad
   \Lambda_c=
   \begin{pmatrix}K&M_c\\0&R_c\end{pmatrix}.
\]
Then \(\dim_K\Lambda_c=10\) and
\[
   \Findim(\Lambda_c^{\op})=1<2=\ddell\Lambda_c=\dell\Lambda_c.
\]
\end{proposition}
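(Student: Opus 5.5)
The plan is to rerun the proof of \cref{thm:small-counterexample} with \(R\) replaced by \(R_c\) and \(M\) by \(M_c\). Most steps carry over verbatim. The only part that needs new input is the upper bound \(\dell\Lambda_c\leq2\), because \(R_c\) is commutative and the syzygy pattern \eqref{eq:small-R-syzygies} no longer holds.

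\emph{Dimension and structure.} The algebra \(R_c\) has basis \(1,x,y,x^2,xy,y^2\). The module \(M_c\) has basis \(m_0=\bar1\), \(m_1=\bar x\), \(m_2=\bar x^2\), with \(M_cy=0\) and \(M_cx^2\neq0\). Hence \(\dim_K\Lambda_c=1+3+6=10\). As before, \(P_1=R_c\), \(\rad P_2=M_c=\Omega S_2\), \((\rad\Lambda_c)e_2=0\), and \(S_1\cong x^2R_c\subseteq P_1\) is torsionless.

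\emph{Opposite big finitistic dimension.} Since the simple \(R_c\)-module embeds in \(R_c\), we have \(\dell R_c=0\), so \(\Findim(R_c^{\op})=0\) by \eqref{eq:basic-bounds}. The estimate \cite[Corollary~4.21]{FGR} then gives \(\Findim(\Lambda_c^{\op})\leq1\). The non-split sequence \eqref{eq:small-pd-one}, with \(M_c\) in place of \(M\), produces a left module \((0,R_c,0)\) of projective dimension one. Nothing here uses non-commutativity.

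\emph{Upper bound.} This is the step needing new computation. I expect \(\ddell\Lambda_c\leq\dell\Lambda_c\leq2\) to follow from the following syzygies over \(R_c\):
\begin{itemize}
\item \(\Omega M_c=yR_c\cong R_c/\operatorname{ann}(y)=R_c/J^2=:U\), so \(\Omega^2_{\Lambda_c}S_2\cong U\).
\item \(\Omega_{R_c}(R_c/J)=J\).
\item \(\Omega_{R_c}J\) is the kernel of \(R_c^2\to J\), \((a,b)\mapsto ax+by\), which has dimension \(7\). It contains \((y,-x)R_c\cong U\), spanned by \((y,-x),(xy,-x^2),(y^2,-xy)\). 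This copy meets \(J^2\oplus J^2\) in a \(2\)-dimensional space, and any semisimple complement \(V\) of that intersection inside \(J^2\oplus J^2\) gives \(\Omega J\cong U\oplus V\).
\end{itemize}
Next, \(S_1\cong x^2R_c\) gives \(S_1\cong\Omega(R_c/x^2R_c)\) up to projectives. So with \(N=R_c/x^2R_c\) we get \(\Omega^3N\simeq\Omega^2S_1\simeq\Omega J\), which has \(U\simeq\Omega^2S_2\) as a direct summand. Hence \(\dell S_2\leq2\), while \(\dell S_1=0\). If the summand computation fails, the fallback is to locate \(U\) as a summand of \(\Omega^3\) of some other module, e.g. of \(\Omega^3S_2=\Omega U=J^2\cong T^3\) after one more step.

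\emph{Lower bound.} The annihilator identity \eqref{eq:annihilator-equality} still holds in \(R_c\). Writing \(u=c+\alpha x+\beta y+w\) with \(w\in J^2\), we get
\[
ux=cx+\alpha x^2+\beta xy,\qquad uy=cy+\alpha xy+\beta y^2,
\]
and in each the three terms are independent. So \(\{u\mid ux=0\}=\{u\mid uy=0\}=J^2\), and the implication \eqref{eq:torsionless-implication} holds for torsionless \(R_c\)-modules. Then \cref{lem:no-torsionless-extension} holds for \(M_c\), using only \(M_cy=0\) and \(M_cx^2\neq0\). The reduction of a degree-one witness \eqref{eq:degree-one-witness} to the forbidden sequence is purely formal: it uses \((\rad\Lambda_c)e_2=0\), \cref{lem:stable-actual}, Krull--Schmidt and \(\End(P_2)=K\). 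So it goes through unchanged and gives \(\ddell S_2\geq2\). Combining this with the two previous bounds proves the proposition.
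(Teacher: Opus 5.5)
Your proposal is correct and follows the paper's proof: the finitistic-dimension bound and the annihilator/torsionless lower-bound argument carry over verbatim, and the upper bound rests on the same key computation $\Omega_{R_c}J_c\cong U_c\oplus(\text{4-dimensional semisimple})$, obtained from the cyclic submodule generated by $(y,-x)$. The only difference is cosmetic: you take the witness module $N=R_c/x^2R_c$, so that $\Omega^3N=\Omega^2S_1=\Omega J_c$, whereas the paper takes $N=U_c$ and uses $\Omega^3U_c\simeq U_c^{(3)}\oplus T_c^{(12)}$.
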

\begin{proof}
Put
\[
   J_c=\rad R_c=(x,y),
   \qquad
   T_c=R_c/J_c,
   \qquad
   U_c=R_c/J_c^2.
\]
The opposite finitistic-dimension computation and the lower-bound argument above remain valid: right multiplication by \(x\) and by \(y\) on \(R_c\) again have kernel \(J_c^2\), while \(M_cy=0\) and \(M_cx^2\neq0\).

For the upper bound,
\[
   \Omega_{\Lambda_c}^2S_2\simeq U_c,
   \qquad
   \Omega_{R_c}U_c\simeq T_c^{(3)},
\]
and
\begin{equation}\label{eq:commutative-kernel}
   \Omega_{R_c}^2T_c\simeq U_c\oplus T_c^{(4)}.
\end{equation}
To verify \eqref{eq:commutative-kernel}, consider the projective cover
\[
   R_c^2\longrightarrow J_c,
   \qquad
   (r,s)\longmapsto xr+ys.
\]
Its kernel is the direct sum of the cyclic submodule generated by \((-y,x)\), which is isomorphic to \(U_c\), and a four-dimensional semisimple complement.  Hence
\[
   \Omega_{R_c}^3U_c\simeq U_c^{(3)}\oplus T_c^{(12)},
\]
so \(U_c\) is a direct summand of a third syzygy and \(\dell S_2\leq2\).  The other simple \(S_1\) is torsionless, as before, and hence \(\dell\Lambda_c\leq2\).  The lower-bound argument gives \(\ddell\Lambda_c\geq2\), while \(\ddell\Lambda_c\leq\dell\Lambda_c\); therefore \(\ddell\Lambda_c=\dell\Lambda_c=2\).
\end{proof}

\section{Unboundedness of \texorpdfstring{\(\ddell A-\Findim(A^{\op})\)}{ddell A - Findim(Aop)} for monomial algebras}\label{sec:unbounded}

We now construct a monomial family for which the opposite big finitistic dimension is fixed while the derived delooping level tends to infinity.

Fix \(r\geq1\), and put
\begin{equation}\label{eq:Br-parameters}
   m=2r,
   \qquad
   h=2r+2,
   \qquad
   \ell=2h=4r+4,
   \qquad
   N=(r+1)\ell.
\end{equation}
Let \(Q_N\) be the linearly oriented quiver
\[
   1\longrightarrow2\longrightarrow\cdots\longrightarrow N,
\]
and let \(J_T\) be the arrow ideal of \(KQ_N\).  Put
\[
   T=KQ_N/J_T^\ell.
\]
Let \(S_i\) denote the simple right \(T\)-module at vertex \(i\).  For
\[
   1\leq s\leq N,
   \qquad
   1\leq b\leq\min\{\ell,N-s+1\},
\]
let \(I(s,b)\) denote the interval right \(T\)-module
\[
I(s,b)=
\begin{matrix}
   s\\
   s+1\\
   \vdots\\
   s+b-1
\end{matrix}.
\]
Put
\[
   U=I(1,h),
   \qquad
   V=\bigoplus_{i=1}^{N}S_i,
   \qquad
   D=K[\gamma]/(\gamma^2).
\]
Set
\[
   W=D\otimes_KU,
   \qquad
   E=W\oplus V,
\]
where \(D\) acts on \(V\) through \(D/(\gamma)\simeq K\), and define
\begin{equation}\label{eq:Br-matrix}
   B_r=
   \begin{pmatrix}
      D&E\\
      0&T
   \end{pmatrix}.
\end{equation}

Equivalently, \(B_r\) is the monomial algebra with vertices \(1,2,\ldots,N+1\).  We keep the vertices \(1,\ldots,N\) for \(T\) and use \(N+1\) for the new vertex.  The arrows are
\[
\begin{gathered}
   \gamma:N+1\to N+1,
   \qquad
   a:N+1\to1,
   \qquad
   \beta_i:N+1\to i\quad(1\leq i\leq N),\\
   \alpha_i:i\to i+1\quad(1\leq i<N).
\end{gathered}
\]
The quiver of \(B_r\) may be displayed schematically as in \cref{fig:Br-quiver}; in particular, there are two parallel arrows from \(N+1\) to \(1\), namely \(a\) and \(\beta_1\).

\begin{figure}[H]
\centering
\begin{tikzpicture}[>=Stealth, every node/.style={font=\small}, x=1cm, y=1cm]
   \node (1)   at (0,0) {$1$};
   \node (2)   at (1.8,0) {$2$};
   \node (dots) at (3.7,0) {$\cdots$};
   \node (Nm1) at (5.8,0) {$N-1$};
   \node (N)   at (7.6,0) {$N$};
   \node (Np1) at (3.05,-2.05) {$N+1$};

   \draw[->] (1) -- node[above] {$\alpha_1$} (2);
   \draw[->] (2) -- node[above] {$\alpha_2$} (dots);
   \draw[->] (dots) -- node[above] {$\alpha_{N-2}$} (Nm1);
   \draw[->] (Nm1) -- node[above] {$\alpha_{N-1}$} (N);

   \path[->] (Np1) edge[loop below, min distance=13mm, looseness=14] node[left] {$\gamma$} (Np1);
   \draw[->,bend left=25] (Np1) to node[left,pos=.50] {$a$} (1);
   \draw[->,bend right=20] (Np1) to node[below,pos=.56] {$\beta_1$} (1);
   \draw[->] (Np1) to node[above right,pos=.61, xshift=1pt, yshift=-1pt] {$\beta_2$} (2);
   \draw[->,bend left=3] (Np1) to node[below left,pos=.77, xshift=-4pt, yshift=4pt] {$\beta_{N-1}$} (Nm1);
   \draw[->] (Np1) to node[below right,pos=.77, xshift=0pt, yshift=-2pt] {$\beta_N$} (N);
\end{tikzpicture}
\caption{A schematic depiction of the bound quiver of \(B_r\). In addition to the displayed arrows, there are arrows \(\beta_i:N+1\to i\) for the intermediate vertices \(3\le i\le N-2\); the defining monomial relations are listed in \eqref{eq:Br-relations}.}
\label{fig:Br-quiver}
\end{figure}

The defining monomial relations are
\begin{equation}\label{eq:Br-relations}
\begin{gathered}
   \gamma^2,
   \qquad
   \gamma\beta_i\quad(1\leq i\leq N),
   \qquad
   \beta_i\alpha_i\quad(1\leq i<N),\\
   a\alpha_1\cdots\alpha_h,
   \qquad
   \alpha_i\alpha_{i+1}\cdots\alpha_{i+\ell-1}
   \quad(1\leq i\leq N-\ell).
\end{gathered}
\end{equation}
In particular, \(B_r\) is finite dimensional and monomial.

\begin{theorem}\label{thm:unbounded-gap}
For every \(r\geq1\),
\[
   \Findim(B_r^{\op})=1,
   \qquad
   \ddell B_r=2r+1.
\]
Consequently,
\[
   \ddell B_r-\Findim(B_r^{\op})=2r,
\]
which is unbounded as \(r\to\infty\) among finite-dimensional monomial algebras.
\end{theorem}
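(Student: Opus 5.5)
We prove the three parts separately: \(\Findim(B_r^{\op})=1\), the upper bound \(\ddell B_r\leq2r+1\), and the lower bound \(\ddell B_r\geq2r+1\).

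\textbf{The opposite finitistic dimension.} We copy \cref{sec:small}. First, \(T\) is a Nakayama algebra \(KQ_N/J_T^\ell\). For \(N\) large relative to \(\ell\), every simple right \(T\)-module embeds in some projective: \(S_i\) is the socle of \(e_{i-\ell+1}T\) when \(i\geq\ell\). The simples \(S_i\) with \(i<\ell\) need separate care, and we will instead compute \(\Findim(T^{\op})=0\) directly from the Nakayama structure (the opposite algebra is again a truncated linear Nakayama algebra). The diagonal corner \(D=K[\gamma]/(\gamma^2)\) is selfinjective, so \(\Findim D=0\). Writing \(B_r\) as a lower triangular matrix ring and applying \cite[Corollary~4.21]{FGR} gives
\[
\Findim(B_r^{\op})\leq\Findim(T^{\op})+\Findim(D)+1 .
\]
For the reverse inequality we use the triple description: the sequence analogous to \eqref{eq:small-pd-one}, built from the projective \(B_r e_1\), gives a left module of projective dimension exactly one.

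\textbf{Upper bound.} By \eqref{eq:basic-bounds} and \eqref{eq:degree-monotonicity} it suffices to find derived witnesses simple by simple. The simples \(S_i\) with \(i\leq N\) should have small delooping level, since they are \(T\)-modules and their syzygies cycle through intervals of length \(\ell\) and \(1\). For the new simple \(S_{N+1}\), the radical of \(P_{N+1}\) is \(\gamma P_{N+1}\oplus W\oplus V\). So
\[
\Omega S_{N+1}\simeq \bigl(\text{a copy of }\gamma\text{-part}\bigr)\oplus U\oplus V ,
\]
up to identification of the \(\gamma\)-summand, which again contains \(U\oplus V\). The summands \(V\) are infinitely deloopable, since each simple occurs as a summand of every syzygy of \(S_{N+1}\). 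The only problem is the interval \(U=I(1,h)\), whose syzygies over the Nakayama algebra \(T\) alternate:
\[
\Omega I(s,b)=I(s+b,\ell-b).
\]
The lengths \(h\) and \(\ell-h=h\) are equal, so the syzygies march to the right in steps of \(h\) and become projective or simple at the boundary after about \(N/h=2(r+1)\) steps. This gives only \(\dell S_{N+1}\approx 2r+2\). To bring the bound down to \(2r+1\), we build a derived witness of the form \eqref{eq:BLM-derived-resolution}: replace \(S_{N+1}\) by a short resolution whose terms, after one or two steps, are truncated intervals reaching the boundary, and check each condition \((i+1)\text{-}\dell C_i\leq 2r+1-i\) with \cref{lem:syzygy-shift}. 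Choosing the truncation correctly is a bookkeeping step.

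\textbf{Lower bound (the main obstacle).} Suppose a witness \(0\to C_n\to\cdots\to C_0\to S_{N+1}\to0\) exists with \(n\leq 2r\) and \((i+1)\text{-}\dell C_i\leq 2r-i\). We argue as in \cref{thm:small-counterexample}, stripping \(P_{N+1}\)-summands and pulling back along projective covers. This reduces the situation to an exact complex of \(T\)-modules beginning with \(W\oplus V\oplus(\text{projectives})\). In it, each term is a stable summand of a high syzygy, which forces it to be a sum of intervals with constrained start vertices, in the manner of \cref{lem:BLM-syzygy-types}.

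The intended tool is a grading functor: use the \(\gamma\)-grading (or the path-length grading on \(T\)) to turn the complex into a filtration by short exact sequences of graded interval modules. The obstruction should be a nonzero stable composite
\[
W\to\Omega^{2r}(\cdot)\ \text{(a composite of stable maps)}
\]
that vanishes on every allowed filtration factor, because those factors have tops at vertices incompatible with \(a\alpha_1\cdots\alpha_h=0\). This would generalize the two-step annihilator argument of \cref{lem:no-torsionless-extension} to \(2r\) steps, with \(\gamma\) and the doubling \(D\otimes U\) supplying the rigidity that \(x,y\) supplied there.

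I expect the key difficulty to be making this invariant precise and showing that it survives extensions. My first attempt would be an induction on \(n\) using the long exact sequence in stable \(\Hom\) from each short exact sequence.
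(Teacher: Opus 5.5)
Your proposal has a genuine gap in each of its three parts; the first is an outright error.

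\textbf{The finitistic dimension.} Your computation of \(\Findim(B_r^{\op})\) breaks at its first step, because \(\Findim(T^{\op})\) is not \(0\). The quiver \(Q_N\) has no oriented cycles, so \(T\) has finite global dimension and \(\Findim(T^{\op})=\operatorname{gldim}T\). Already \(\pd_TS_1=2r+1\): the syzygies of \(S_1\) run through \(I(2,\ell-1),S_{\ell+1},\ldots,S_{1+r\ell}\) and stop at the projective \(I(r\ell+2,\ell-1)=P_{r\ell+2}\). Hence the triangular estimate of \cite[Corollary~4.21]{FGR} gives nothing better than \(2r+2\).

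The drop to \(1\) comes from the arrows \(\beta_i\), which no estimate in terms of \(\Findim(T^{\op})\) can see. Since \(\beta_iJ(B)=0\), the simple \(K\beta_i\cong L_{N+1}\) is a direct summand of \(\rad Q_i\) for every \(i\leq N\). This prevents \(Q_i\) from occurring in either of the last two terms \(F_d,F_{d-1}\) of a minimal left resolution of length \(d\geq2\). In \(F_d\) the minimal map would kill \(\beta_i\); in \(F_{d-1}\) a copy of \(L_{N+1}\) would split off \(\ker d_{d-1}\cong F_d\). Then \(F_d,F_{d-1}\in\Add(Q_{N+1})\), and \(\gamma^2=0\) forces \(d_d(\gamma F_d)=0\), contradicting injectivity.

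Your lower-bound module also fails as stated. Here \(Ee_1\cong D\oplus K\) as a left \(D\)-module, so the kernel \((Ee_1,0)\cong Q_{N+1}\oplus L_{N+1}\) of \(Be_1\to(0,Te_1,0)\) is not projective. Since \(L_{N+1}\) has a periodic resolution, \((0,Te_1,0)\) has infinite projective dimension. Use the non-split monomorphism \(Q_{N+1}\xrightarrow{\cdot a}Q_1\) instead.

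\textbf{The derived delooping level.} Neither bound is actually established.

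The upper bound is not bookkeeping. Take \(C=P_{N+1}/(G\oplus V\oplus\soc U)\), so that \(\Omega C=G\oplus V\oplus S_h\) and \(\dell C\leq1\). The natural length-one witness \(0\to\rad C\to C\to S_{N+1}\to0\) only certifies \(2r+2\). Indeed, \(\Omega^{2r}(\rad C)=I(1+r\ell,2r+1)\) is not a stable summand of any \((2r+2)\)-nd syzygy. The nonprojective indecomposable summands of such syzygies lie among \(G\), intervals of length \(1\) or \(\ell-1\), and intervals starting in degree at least \(r\ell+2\). What works is a witness of length exactly \(2r+1\). Splice onto the sequence above \(r\) four-term sequences \(0\to I(s+\ell,b-2)\to I(t,\ell-1)\to P_s\oplus S_t\to I(s,b)\to0\), so that every term other than \(C\) is projective or infinitely deloopable.

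For the lower bound, the reduction you describe is not available. \(G=\gamma B\) is a summand of every syzygy of \(S_{N+1}\), because \(\Omega G=G\oplus V\), so stripping \(P_{N+1}\)-summands never brings you to \(T\)-modules. An induction over short exact sequences via stable \(\Hom\) needs projectives to be injective, which fails over \(B\) and over \(T\). The missing ingredients are the following:
\begin{enumerate}[label=(\arabic*)]
\item Rotate the witness by \(\Omega^{2r-i}\) so that every piece is torsionless.
\item Apply \(F(X)=\operatorname{coker}(Xe_{N+1}\otimes_DW\to Xe)\). It is exact on torsionless sequences, kills \(G\), and lands in graded modules over the graded Frobenius algebra \(H=K[t]/(t^\ell)\), where a filtration (ghost) lemma holds.
\item Show that the filtration factors are summands of sums of intervals of length \(1\), intervals of length \(\ell-1\), graded free modules, and intervals generated in degree at least \(r\ell+1\).
\item Exhibit the ghosts \(f_j:Z_j\to Z_{j+1}\), given by multiplication by \(t\). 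Here \(Z_0=\Omega_T^{2r-1}U=I(1+(r-1)\ell+h,h)\) ends in degree \(r\ell\), and \(Z_j\) is its shift down by \(j\) degrees.
\end{enumerate}
Their \((2r+1)\)-fold composite is multiplication by \(t^{h-1}\), which is stably nonzero precisely because \(\ell=2h\).
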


\subsection{Basic syzygies}

Write \(B=B_r\), \(P_i=e_iB\), and let \(S_{N+1}\) be the simple at the new vertex.  Put
\[
   G=\gamma B.
\]
The radical of \(P_{N+1}\) decomposes as
\[
   \rad P_{N+1}=G\oplus U\oplus V,
\]
and the projective cover \(P_{N+1}\twoheadrightarrow G\), \(e_{N+1}\mapsto\gamma\), has kernel \(G\oplus V\).  Hence
\begin{equation}\label{eq:Br-basic-syzygies}
   \Omega_BS_{N+1}=G\oplus U\oplus V,
   \qquad
   \Omega_BG=G\oplus V.
\end{equation}
Iterating the second equality gives
\begin{equation}\label{eq:Br-G-iterates}
   \Omega_B^jG
   \simeq G\oplus V\oplus\Omega_TV\oplus\cdots
   \oplus\Omega_T^{j-1}V
   \qquad(j\geq1).
\end{equation}
It follows that \(G\) and every \(S_i\), \(1\leq i\leq N\), are direct summands of syzygies of every positive order.  Their syzygies have the same property.

\subsection{The opposite big finitistic dimension}

Let \(Q_i=Be_i\) denote the indecomposable left projectives, and let \(L_{N+1}\) be the simple left module at vertex \(N+1\).  For every \(1\leq i\leq N\),
\[
   \beta_iJ(B)=0,
\]
and path decomposition by the last arrow gives a left-module direct sum
\begin{equation}\label{eq:Br-rad-left}
   \rad Q_i=K\beta_i\oplus R_i,
   \qquad
   K\beta_i\simeq L_{N+1}.
\end{equation}

\begin{proposition}\label{prop:Br-Findim}
One has
\[
   \findim(B^{\op})=\Findim(B^{\op})=1.
\]
\end{proposition}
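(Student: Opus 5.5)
The plan mirrors the treatment of \(\Findim(\Lambda^{\op})\) in \cref{sec:small}: an upper bound from the triangular-matrix estimate, and a lower bound from an explicit left module of projective dimension one.

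\emph{Upper bound.} After interchanging the diagonal blocks, \(B\) becomes the lower triangular ring \(\begin{pmatrix}T&0\\ E&D\end{pmatrix}\). The estimate \cite[Corollary~4.21]{FGR} then gives
\[
   \Findim(B^{\op})\leq\Findim(T^{\op})+\Findim(D^{\op})+1.
\]

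\begin{itemize}
\item Since \(D=K[\gamma]/(\gamma^2)\) is self-injective, \(\Findim(D^{\op})=0\).
\item For \(T\), I would show \(\Findim(T^{\op})=0\) by showing \(\dell T=0\) and then invoking \eqref{eq:basic-bounds}. This requires every simple right \(T\)-module \(S_i\) to embed in a projective.
\item For \(i\geq\ell\), the simple \(S_i\) is the socle of \(P^T_{i-\ell+1}=I(i-\ell+1,\ell)\).
\item For \(i<\ell\), no projective \(T\)-module has socle \(S_i\). Indeed, the socle of \(P^T_j\) is \(S_{\min(j+\ell-1,N)}\), which is always at least \(\ell\). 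So \(\dell T=0\) fails, and this step does not work as stated.
\end{itemize}

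I would therefore replace it by a direct bound on projective dimensions over the Nakayama algebra \(T^{\op}\), a truncated linear path algebra. For \(T^{\op}\) I expect \(\Findim\leq 1\) or \(2\), which would give \(\Findim(B^{\op})\leq 3\) at best. That is too weak, so the \cite{FGR} route is probably not sharp enough on its own.

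\emph{Alternative upper bound.} A more reliable route is to use \(\Findim(B^{\op})\leq\ddell B\) for a cleverly chosen module, which bounds only simples, or better, to argue directly with left modules.

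\begin{itemize}
\item By \eqref{eq:Br-rad-left}, \(L_{N+1}\) is a direct summand of \(\rad Q_i\) for every \(i\leq N\).
\item I would show that every left syzygy \(\Omega^2X\), for finitely generated or arbitrary \(X\), has no nonprojective summand on which the finiteness of \(\pd\) can be propagated. The plan is to prove that \(\pd L_{N+1}=\infty\) and that any left module of finite projective dimension has first syzygy projective.
\item Concretely, \(L_{N+1}\) has syzygy \(\rad Q_{N+1}=B\gamma\oplus Ba\oplus\cdots\). The left module \(B\gamma\cong L_{N+1}\) recurs, so \(\pd L_{N+1}=\infty\).
\item Then I would use the standard Bass-type criterion: if every simple left module of infinite projective dimension appears in \(\soc\) of syzygies in a way forced by \eqref{eq:Br-rad-left}, then a module of \(\pd\geq2\) would have a second syzygy whose top must avoid \(L_{N+1}\) and whose radical lies in the \(T\)-part. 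Here \(T^{\op}\)'s own finitistic dimension, which is zero, would force projectivity.
\end{itemize}

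\emph{Lower bound.} I would exhibit an exact sequence \(0\to Q_{N+1}\to Q_1\to X\to0\) sending \(e_{N+1}\mapsto a\). Its image \(Ba\) is isomorphic to \(Q_{N+1}\), since right multiplication by \(a\) is injective on \(Be_{N+1}=\langle e_{N+1},\gamma\rangle\): we have \(\gamma a\neq0\), and \(\gamma\) is annihilated only by \(\gamma\beta_i\). This gives \(\pd X=1\), hence \(\findim(B^{\op})\geq1\).

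\emph{Main obstacle.} The hard step is the upper bound \(\Findim(B^{\op})\leq1\) for infinitely generated modules. I would first try reducing to the known inequality \(\Findim(B^{\op})\leq\ddell B\) applied only to the simples \(S_i\) with \(i\leq N\), together with a separate argument for \(S_{N+1}\) using \eqref{eq:Br-basic-syzygies}. The obstruction is that \(\ddell S_{N+1}\) is large by design, so the bound must instead come from the one-point-extension structure via \cite{FGR}, with a sharper \(\Findim(T^{\op})=0\) argument.
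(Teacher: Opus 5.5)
\textbf{There is a genuine gap: the upper bound \(\Findim(B^{\op})\leq1\) is not proved.}

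Your lower bound is fine and is the paper's. Right multiplication by \(a\) embeds \(Q_{N+1}\) into \(Q_1\). You should add that the image is a proper nonzero submodule of the indecomposable \(Q_1\), so the cokernel is nonprojective and has projective dimension exactly one.

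The upper bound is the substance of the proposition, and both routes you sketch fail.

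\emph{The triangular-matrix estimate is hopeless, not merely weak.} \(T\) has no oriented cycles, so \(\Findim(T^{\op})\) equals the global dimension of \(T\). By \eqref{eq:T-interval-syzygy}, \(\Omega_T^{2j}S_1\cong S_{1+j\ell}\) for \(j\leq r\), and \(\Omega_T^{2r+1}S_1\cong I(2+r\ell,\ell-1)=P_{2+r\ell}\). Hence \(\pd_TS_1=2r+1\), and the FGR bound is at least \(2r+2\). In particular, the claim \(\Findim(T^{\op})=0\) that your last two paragraphs rely on is false, as is your guess of ``\(1\) or \(2\)''.

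\emph{Your Bass-type sketch mixes up sides.} \(Q_{N+1}=Be_{N+1}=\langle e_{N+1},\gamma\rangle_K\), as you yourself use in the lower bound. So \(\rad Q_{N+1}=K\gamma\), and \(Ba\) lies in \(Q_1\), not in \(Q_{N+1}\). Moreover, ``every left module of finite projective dimension has projective first syzygy'' merely restates the claim.

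\textbf{The missing idea} is a direct analysis of the tail \(0\to F_d\xrightarrow{d_d}F_{d-1}\xrightarrow{d_{d-1}}F_{d-2}\), with \(d\geq2\), of a minimal projective resolution of an arbitrary left module. Such resolutions exist because \(B\) is left perfect, so infinitely generated modules are handled at once. The conclusion is the opposite of your guess: the last two terms are forced into \(\Add(Q_{N+1})\).
\begin{itemize}
\item For \(i\leq N\), one has \(\beta_iJ(B)=0\). So any map out of \(Q_i\) with image in a radical kills \(\beta_i\).
\item Hence \(F_d\) has no summand \(Q_i\) with \(i\leq N\), since \(d_d\) is injective.
\item \(F_{d-1}\) has no such summand either. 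Otherwise \(K\beta_i\subseteq\ker d_{d-1}\subseteq\rad F_{d-1}\). Since \(K\beta_i\) splits off \(\rad F_{d-1}\) by \eqref{eq:Br-rad-left}, the nonprojective simple \(L_{N+1}\) would be a direct summand of \(\ker d_{d-1}\cong F_d\), which is impossible.
\item Thus \(F_d,F_{d-1}\in\Add(Q_{N+1})\). Because \(\rad Q_{N+1}=K\gamma\) with \(\gamma^2=0\), minimality gives \(d_d(\gamma F_d)\subseteq\gamma^2F_{d-1}=0\). But \(\gamma F_d\neq0\), contradicting injectivity.
\end{itemize}
The two-dimensionality of \(Q_{N+1}\), which your sketch miscomputed, is exactly what makes this final step work.
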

\begin{proof}
Since \(B\) is finite dimensional, it is left perfect.  Hence every left \(B\)-module has a projective cover, and every projective left \(B\)-module is a direct sum of the indecomposable projectives \(Q_1,\ldots,Q_{N+1}\).  Suppose that a left \(B\)-module has finite projective dimension \(d\geq2\), and take the end of a minimal projective resolution
\[
   0\longrightarrow F_d\xrightarrow{d_d}F_{d-1}
   \xrightarrow{d_{d-1}}F_{d-2}.
\]
If \(F_d\) had a summand \(Q_i\) with \(1\leq i\leq N\), then the restriction of the minimal map \(d_d\) to this summand would kill \(\beta_i\), because its image lies in the radical and \(\beta_iJ(B)=0\).  This contradicts injectivity.  Thus \(F_d\in\Add(Q_{N+1})\).

If \(F_{d-1}\) had a summand \(Q_i\) with \(1\leq i\leq N\), then \(K\beta_i\subseteq\ker d_{d-1}=\operatorname{im}d_d\).  By \eqref{eq:Br-rad-left}, after isolating this copy of \(Q_i\), we may write
\[
   \rad F_{d-1}=K\beta_i\oplus R'
\]
for a submodule \(R'\).  Since the resolution is minimal, \(\ker d_{d-1}\subseteq\rad F_{d-1}\), and the inclusion \(K\beta_i\subseteq\ker d_{d-1}\) therefore gives
\[
   \ker d_{d-1}=K\beta_i\oplus(\ker d_{d-1}\cap R').
\]
Thus this copy of \(L_{N+1}\) is a direct summand of \(\ker d_{d-1}\cong F_d\), which is impossible because \(F_d\) is projective whereas \(L_{N+1}\) is nonprojective.  Therefore \(F_{d-1}\in\Add(Q_{N+1})\).

Now \(e_{N+1}J(B)e_{N+1}=K\gamma\) and \(\gamma^2=0\).  Since \(d_d\) is minimal, its image is contained in \(\gamma F_{d-1}\), and consequently \(d_d(\gamma F_d)=0\).  But \(\gamma F_d\neq0\), contradicting injectivity.  Hence every left module of finite projective dimension has projective dimension at most one.

Finally, the map
\[
   Q_{N+1}\longrightarrow Q_1,
   \qquad z\longmapsto za,
\]
is injective because \(a\) and \(\gamma a\) are linearly independent.  Its image is a nonzero proper submodule of the indecomposable projective \(Q_1\), so the monomorphism is non-split.  Its cokernel is therefore nonprojective and, from the displayed projective presentation, has projective dimension exactly one.  This proves the assertion for both little and big finitistic dimensions.
\end{proof}

\subsection{A derived-delooping resolution of length \texorpdfstring{$2r+1$}{2r+1}}

Inside \(U=I(1,h)\), let \(\soc U=S_h\), and set
\begin{equation}\label{eq:Br-C}
   C=P_{N+1}/(G\oplus V\oplus\soc U).
\end{equation}
Then
\begin{equation}\label{eq:Br-C-syzygy}
   \rad C=I(1,h-1),
   \qquad
   \Omega_BC=G\oplus V\oplus S_h.
\end{equation}
Thus \(k\text{-}\dell C\leq1\) for every \(k\geq1\).

We shall use the following elementary exact sequence of interval modules.

\begin{lemma}\label{lem:interval-four-term}
Let \(3\leq b<\ell\), put \(t=s+b-1\), and assume that
\[
   s+\ell+b-3\leq N.
\]
Then all the interval modules displayed below exist, \(P_s=I(s,\ell)\) is projective, and there is an exact sequence
\begin{equation}\label{eq:interval-four-term}
0\longrightarrow I(s+\ell,b-2)
\longrightarrow I(t,\ell-1)
\longrightarrow P_s\oplus S_t
\longrightarrow I(s,b)
\longrightarrow0.
\end{equation}
\end{lemma}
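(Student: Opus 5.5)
The plan is to build the sequence \eqref{eq:interval-four-term} from right to left as a spliced pair of short exact sequences. Throughout, I use that over the linear Nakayama algebra \(T=KQ_N/J_T^\ell\) the submodules and quotients of an interval module are again interval modules. A quotient keeps the top and truncates from the bottom; a submodule keeps the socle and starts at a later vertex.

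\emph{Existence.} Since \(b\geq3\), the hypothesis \(s+\ell+b-3\leq N\) gives \(s+\ell-1\leq N\). Hence \(P_s=e_sT=I(s,\ell)\) is projective of full Loewy length \(\ell\). The module \(I(t,\ell-1)\) ends at vertex \(t+\ell-2=s+\ell+b-3\leq N\), and \(I(s+\ell,b-2)\) ends at the same vertex. Both modules therefore exist, and their lengths \(\ell-1\) and \(b-2\) are at most \(\ell\). This step is routine.

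\emph{The right-hand map.} I would define \(\pi+\iota:P_s\oplus S_t\to I(s,b)\). Here \(\pi\) is the top projection \(e_s\mapsto\) top generator, and \(\iota\) is the inclusion of the socle \(S_t=\soc I(s,b)\); note that \(t=s+b-1\) is the last vertex of \(I(s,b)\). The map is surjective because \(\pi\) already is. The kernel is the pullback \(\{(p,u)\mid\pi(p)=-\iota(u)\}\). Projecting to the first coordinate identifies it with \(\pi^{-1}(\soc I(s,b))\subseteq P_s\). Since \(\ker\pi=I(s+b,\ell-b)\), this preimage is the submodule of \(P_s\) starting at vertex \(t\), namely \(I(t,\ell-b+1)\).

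\emph{The left part.} Next I would map \(I(t,\ell-1)\) onto this kernel. Both modules are uniserial with top \(S_t\), and \(\ell-b+1\leq\ell-1\). So the top-preserving quotient map \(I(t,\ell-1)\twoheadrightarrow I(t,\ell-b+1)\), composed with the inclusion of the kernel, gives the middle map. Its kernel consists of the bottom \((\ell-1)-(\ell-b+1)=b-2\) composition factors, i.e.\ the vertices \(t+\ell-b+1,\ldots,t+\ell-2\), which equal \(s+\ell,\ldots,s+\ell+b-3\). This kernel is therefore \(I(s+\ell,b-2)\), and splicing the two short exact sequences gives \eqref{eq:interval-four-term}. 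As a sanity check, the alternating sum of dimensions is \((b-2)-(\ell-1)+(\ell+1)-b=0\).

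The only step needing real care is identifying the pullback kernel \(\pi^{-1}(\soc I(s,b))\) with the explicit interval \(I(t,\ell-b+1)\) inside \(P_s\). The point is that \(\ell>b\) ensures \(P_s\) is strictly longer than \(I(s,b)\), so this preimage is a proper interval starting at vertex \(t\). The rest is index bookkeeping, which I expect to be the main (though minor) obstacle.
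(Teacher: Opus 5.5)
Your proposal is correct and follows the paper's proof essentially step for step. It uses the same map $P_s\oplus S_t\to I(s,b)$ (top projection plus the socle inclusion, with the opposite sign on $S_t$), and identifies its kernel, as the paper does, with the interval $I(t,\ell-b+1)\subseteq P_s$ spanned by the paths of length at least $b-1$. The final step, the top-preserving quotient $I(t,\ell-1)\twoheadrightarrow I(t,\ell-b+1)$ with kernel $I(s+\ell,b-2)$, is also the same. Your explicit check that all the intervals exist, and the dimension count, are small additions that the paper leaves implicit.
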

\begin{proof}
Let \(p_0,\ldots,p_{\ell-1}\) be the path basis of \(P_s\), let \(u_0,\ldots,u_{b-1}\) be the standard basis of \(I(s,b)\), and let \(v\) generate \(S_t\).  Define
\[
   \pi:P_s\oplus S_t\longrightarrow I(s,b)
\]
by
\[
   \pi(p_j)=u_j\quad(j<b),
   \qquad
   \pi(p_j)=0\quad(j\geq b),
   \qquad
   \pi(v)=-u_{b-1}.
\]
Then
\[
   \ker\pi
   =\langle p_{b-1}+v,p_b,\ldots,p_{\ell-1}\rangle_K
   \simeq I(t,\ell-b+1).
\]
The natural quotient map
\[
   I(t,\ell-1)\twoheadrightarrow I(t,\ell-b+1)
\]
has kernel \(I(s+\ell,b-2)\), which proves \eqref{eq:interval-four-term}.
\end{proof}

For \(0\leq j\leq r\), put
\[
   s_j=1+j\ell,
   \qquad
   b_j=2r+1-2j,
   \qquad
   L_j=I(s_j,b_j).
\]
Thus \(L_0=I(1,h-1)\) and \(L_r=S_{1+r\ell}\).  For \(0\leq j\leq r-1\), set
\[
   t_j=s_j+b_j-1,
   \qquad
   A_j=P_{s_j}\oplus S_{t_j},
   \qquad
   R_j=I(t_j,\ell-1)=\Omega_BS_{t_j-1}.
\]
For these parameters,
\[
   N-(s_j+\ell+b_j-3)=(r-j)(\ell-2)+1>0,
\]
so \cref{lem:interval-four-term} applies and gives
\[
   0\longrightarrow L_{j+1}\longrightarrow R_j
   \longrightarrow A_j\longrightarrow L_j\longrightarrow0.
\]
Splicing these exact sequences with
\[
   0\longrightarrow L_0\longrightarrow C\longrightarrow S_{N+1}\longrightarrow0
\]
yields
\begin{equation}\label{eq:Br-long-witness}
\begin{split}
0\longrightarrow S_{1+r\ell}
&\longrightarrow R_{r-1}\longrightarrow A_{r-1}
\longrightarrow\cdots\\
&\longrightarrow R_0\longrightarrow A_0
\longrightarrow C\longrightarrow S_{N+1}\longrightarrow0.
\end{split}
\end{equation}
All terms other than \(C\) are projective or infinitely deloopable, while \(k\text{-}\dell C\leq1\).  If we number the terms preceding \(S_{N+1}\) by \(C_0=C\), \(C_{2j+1}=A_j\), \(C_{2j+2}=R_j\), and \(C_{2r+1}=S_{1+r\ell}\), then the sequence \eqref{eq:Br-long-witness} is a valid derived-delooping witness of value \(2r+1\).  Hence
\begin{equation}\label{eq:Br-upper}
   \ddell B\leq2r+1.
\end{equation}

\subsection{A graded obstruction to shorter derived resolutions}

We prove the reverse inequality in \eqref{eq:Br-upper}.  Put \(e=e_1+\cdots+e_N\).  For a right \(B\)-module \(X\), define
\begin{equation}\label{eq:F-functor}
   F(X)=\operatorname{coker}
   \bigl(Xe_{N+1}\otimes_DW\xrightarrow{\mu_X}Xe\bigr),
\end{equation}
where \(\mu_X\) is induced by multiplication.

\begin{lemma}\label{lem:F-exact}
If \(X\) is torsionless, then \(\mu_X\) is injective.  Consequently, \(F\) preserves every short exact sequence whose three terms are torsionless.  Moreover,
\[
   F(G)=0,
   \qquad
   F(P_{N+1})=V,
   \qquad
   F(P_i)=e_iT\quad(1\leq i\leq N),
\]
and \(F(X)=X\) whenever \(Xe_{N+1}=0\).
\end{lemma}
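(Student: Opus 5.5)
The argument reduces everything to submodules of free modules and then computes directly with the one-point-extension structure of \(B\). By \cref{lem:dell-zero}, a finitely generated torsionless \(X\) embeds in a free module \(B^{(q)}\), and \(B\) as a right module is \(P_{N+1}\oplus\bigoplus_{i\le N}P_i\). The map \(\mu_X\) is natural in \(X\) and compatible with the inclusion, and \(-\otimes_DW\) is exact on the relevant modules because \(W=D\otimes_KU\) is free over \(D\). So it suffices to prove injectivity of \(\mu\) for \(X=B^{(q)}\) and then restrict. Concretely, if \(\mu_{B^{(q)}}\) is injective and \(Xe_{N+1}\otimes_DW\to B^{(q)}e_{N+1}\otimes_DW\) is injective, then \(\mu_X\) is injective. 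The second injectivity holds since \(W\) is \(D\)-flat, being \(D\)-free.

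For the free module we compute summand by summand. For \(1\le i\le N\) we have \(P_ie_{N+1}=0\), so \(\mu\) is the zero map from the zero module. For \(P_{N+1}\), \(P_{N+1}e_{N+1}=D\) with basis \(e_{N+1},\gamma\), and \(D\otimes_DW=W\). The multiplication map sends \(W=D\otimes_KU\) to the span of the paths \(a\alpha_1\cdots\alpha_j\) and \(\gamma a\alpha_1\cdots\alpha_j\) for \(0\le j<h\). These paths are nonzero and linearly independent in the path basis, since \(\gamma a\) is not a relation and the only relation involving \(a\) is \(a\alpha_1\cdots\alpha_h\). So \(\mu_{P_{N+1}}\) is injective. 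Its image is \(\gamma aB+aB\), and \(P_{N+1}e\) also contains the paths \(\beta_i\) and \(\gamma\beta_i\). But \(\gamma\beta_i=0\), so the \(\beta_i\) span a copy of \(V\) (recall \(\beta_i\alpha_i=0\)). Hence \(F(P_{N+1})=V\). The same computation, with the correct identification of \(Ge_{N+1}=K\gamma\) as a \(D\)-module, gives \(F(G)=0\): the cokernel of \(K\gamma\otimes_DW\cong U\to Ge=\gamma aB\cong U\) is zero. When \(Xe_{N+1}=0\), the source vanishes and \(F(X)=Xe=X\); this gives \(F(P_i)=e_iT\).

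Exactness is the main point. Take \(0\to X'\to X\to X''\to0\) with all three terms torsionless. The maps \(\mu\) form a morphism of short exact sequences: the top row \(0\to X'e_{N+1}\otimes W\to Xe_{N+1}\otimes W\to X''e_{N+1}\otimes W\to0\) is exact because multiplication by the idempotent is exact and \(W\) is \(D\)-flat, and the bottom row is obtained from the same exactness for \(e\). The vertical maps are injective by the first part. The snake lemma gives an exact sequence \(0=\ker\mu_{X''}\to F(X')\to F(X)\to F(X'')\to0\), so \(F\) preserves the sequence.

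The main obstacle is verifying linear independence at the free stage, and in particular keeping track of the right \(D\)-module structure on \(Xe_{N+1}\) (acting by \(\gamma\)) against the left \(D\)-structure on \(W\). The cleanest route is to check this directly on the path basis of \(P_{N+1}\), as above, rather than via some general flatness argument.
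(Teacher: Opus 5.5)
Your proposal is correct and follows the paper's proof. You embed \(X\) in a projective module and use that \(W\) is free over \(D\) to reduce injectivity of \(\mu_X\) to the indecomposable projectives: \(\mu_{P_{N+1}}\) identifies \(W\) with the summand \(W\subseteq W\oplus V=P_{N+1}e\), and \(\mu_{P_i}\) has zero source. The snake lemma then gives exactness, and your path-basis check makes explicit what the paper asserts in one line.
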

\begin{proof}
Embed \(X\) in a projective module \(P\).  Since \(W=D\otimes_KU\) is free as a left \(D\)-module, the map
\[
   Xe_{N+1}\otimes_DW\longrightarrow Pe_{N+1}\otimes_DW
\]
is injective.  The multiplication map for a projective module is injective: on \(P_{N+1}\) its image is the direct summand \(W\subseteq W\oplus V=P_{N+1}e\), and on the other indecomposable projectives the source is zero.  Hence \(\mu_X\) is injective.  A diagram chase, or the snake lemma applied to the exact rows, proves exactness of \(F\) on torsionless short exact sequences.  The displayed values follow directly from the definitions.
\end{proof}

\begin{lemma}\label{lem:first-syzygy-decomposition}
For every finitely generated right \(B\)-module \(X\), there is a decomposition
\[
   \Omega_BX\simeq G^{(a)}\oplus X'
\]
for some \(a\geq0\) and some right \(T\)-module \(X'\).
\end{lemma}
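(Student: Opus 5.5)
Since \(\Omega_BX\) is a direct summand of \(\Omega_B(X)\) computed from a projective cover \(P\twoheadrightarrow X\), we have \(\Omega_BX\subseteq\rad P\), where \(P\) is a finite direct sum of indecomposable projectives \(P_i\). The plan is to decompose \(\rad P\) into pieces and show that any submodule of it splits as copies of \(G\) plus a module killed by \(e_{N+1}\). Note that \(\rad P_i\) for \(1\leq i\leq N\) is supported on \(T\)-vertices. Also, \(\rad P_{N+1}=G\oplus U\oplus V\) with \(G=\gamma B\) uniserial of the form \(\gamma\) followed by the \(\gamma\)-image of \(U\), i.e. \(G\simeq \gamma D\otimes U\). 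Hence
\[
   (\rad P)e_{N+1}=(G^{(q)})e_{N+1},
\]
spanned by the tops \(\gamma e_{N+1}\) of the \(G\)-summands, where \(q\) is the multiplicity of \(P_{N+1}\) in \(P\).

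Put \(L=\Omega_BX\subseteq\rad P\). The key steps, in order, are these.

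First, I show \(Le_{N+1}\) is a subspace of \(K^q\cong(\gamma B e_{N+1})^{q}\). Choose a basis \(z_1,\ldots,z_a\) of it, and change basis in \(G^{(q)}\) via \(\mathrm{GL}_q(K)\), which acts by automorphisms since \(\End(G)\) contains the scalars, so that \(z_1,\ldots,z_a\) are the generators of the first \(a\) copies of \(G\). Then \(z_iB\) is the full copy \(G_i\) of \(G\), so \(G^{(a)}\subseteq L\).

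Second, I split this off. The obvious attempt is \(L=G^{(a)}\oplus(L\cap Y)\), where
\[
   Y=G^{(q-a)}\oplus U^{(q)}\oplus V^{(q)}\oplus\bigoplus_{i\leq N}\rad P_i^{(\ast)}.
\]
This works by modularity, since \(G^{(a)}\subseteq L\) and \(\rad P=G^{(a)}\oplus Y\). Then \(X'=L\cap Y\) satisfies \(X'e_{N+1}\subseteq Le_{N+1}\cap Y=0\), because \(Le_{N+1}\) lies in \(G^{(a)}\). So \(X'\) is annihilated by \(e_{N+1}\). A \(B\)-module with \(Xe_{N+1}=0\) is a module over \(B/Be_{N+1}B\cong T\), since the arrows out of \(N+1\) act as zero on it. This gives \(\Omega_BX\simeq G^{(a)}\oplus X'\).

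The main obstacle is the subtlety in the first step that an element of \(Le_{N+1}\) need not lie purely in the \(G\)-summands. However, \((\rad P)e_{N+1}\) equals \(\gamma\cdot(\text{tops})\), because \(e_{N+1}(\rad P_{N+1})e_{N+1}=K\gamma\) and no other \(\rad P_i\) meets vertex \(N+1\). So this is fine. I must also check that \(z_iB\) equals all of \(G_i\) rather than a proper quotient, which holds since \(G\) is cyclic generated by \(\gamma\). Finally, I must make sure the change of basis is by genuine \(B\)-automorphisms of \(\rad P\). For this I would use automorphisms of \(P_{N+1}^{(q)}\) given by scalar matrices, which restrict appropriately to the radical and permute the decomposition \(G^{(q)}\oplus U^{(q)}\oplus V^{(q)}\) compatibly.
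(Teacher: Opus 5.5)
Your proposal is correct and is essentially the paper's proof. You decompose \(\rad P=G^{(q)}\oplus(\text{rest})\) with \((\rad P)e_{N+1}=(G^{(q)})e_{N+1}\cong K^{q}\), apply a \(\mathrm{GL}_q(K)\) change of basis so that \((\Omega_BX)e_{N+1}\) is spanned by the generators of the first \(a\) copies of \(G\), split those copies off, and observe that the complement has zero vertex-\((N+1)\) component and is therefore a \(T\)-module; your explicit modular-law step \(L=G^{(a)}\oplus(L\cap Y)\) just spells out what the paper compresses into ``split off from the ambient direct sum'' (the aside \(G\simeq\gamma D\otimes U\) is imprecise, since that is only the \(T\)-part of \(G\), but nothing depends on it).
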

\begin{proof}
A minimal first syzygy is a submodule of the radical of a projective module, and such a radical has the form \(G^{(b)}\oplus Y\), with \(Ye_{N+1}=0\).  The vertex-\(N+1\) part of \(G^{(b)}\) is \(K^{(b)}\).  After a change of basis by an element of \(\mathrm{GL}_b(K)\), the subspace \((\Omega X)e_{N+1}\) is spanned by the first \(a\) coordinate vectors.  The submodules generated by these vectors are copies of \(G\) and split off from the ambient direct sum.  The remaining summand has zero vertex-\(N+1\) component and is therefore a \(T\)-module.
\end{proof}

Let
\[
   H=K[t]/(t^\ell),
   \qquad \deg t=1.
\]
Let \(\operatorname{grmod}H\) denote the category of finitely generated graded right \(H\)-modules with degree-zero homomorphisms.  A right \(T\)-module is regarded as an object of \(\operatorname{grmod}H\) by placing its vertex-\(i\) component in degree \(i\).  Under this identification, we use the same notation \(I(s,b)\) for the graded interval with generator in degree \(s\) and length \(b\).  The coefficient pairing
\[
   \langle t^i,t^j\rangle=
   \begin{cases}
      1,&i+j=\ell-1,\\
      0,&i+j\neq\ell-1,
   \end{cases}
\]
is associative and nondegenerate.  Thus \(H\) is graded Frobenius; in particular, every finitely generated graded projective \(H\)-module is graded injective.  Since \(H\) is graded local, every finitely generated graded projective module is a finite direct sum of degree shifts of \(H\).  We write
\[
   \underline{\Hom}_H(X,Y)
\]
for degree-zero homomorphisms modulo those which factor through finitely generated graded projective modules.

Let \(\mathcal C\) be a full additive subcategory of \(\operatorname{grmod}H\), closed under direct summands.  For \(p\geq1\), let \(\operatorname{Filt}_p(\mathcal C)\) be the class of direct summands of graded modules \(E\) admitting a filtration
\[
   0=E_0\subseteq E_1\subseteq\cdots\subseteq E_q=E,
   \qquad q\leq p,
\]
with \(E_j/E_{j-1}\in\mathcal C\) for every \(j\).  Thus \(\operatorname{Filt}_1(\mathcal C)=\mathcal C\), and the classes \(\operatorname{Filt}_p(\mathcal C)\) are increasing with \(p\).

Fix \(m=2r\), and put
\begin{equation}\label{eq:Cm}
   \mathcal C_m=
   \add\{F(P),F(\Omega_B^{m+1}X)
   \mid P\text{ projective},\ X\in\modu B\}
   \subseteq\operatorname{grmod}H.
\end{equation}

\begin{lemma}\label{lem:ddell-to-extension}
If \(\ddell_BS_{N+1}\leq m\), then
\[
   F(\Omega_B^mS_{N+1})\in\operatorname{Filt}_{m+1}(\mathcal C_m).
\]
More precisely, it is a direct summand of a graded module admitting a filtration of length at most \(m+1\), all of whose factors lie in \(\mathcal C_m\).
\end{lemma}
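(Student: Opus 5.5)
The plan is to take a derived-delooping witness for \(S_{N+1}\) of value \(m\) and apply \(\Omega^{m-i}\) to its short exact pieces. Each piece becomes an extension of torsionless modules, which the exact functor \(F\) of \cref{lem:F-exact} carries into \(\operatorname{grmod}H\). There the projective error terms split off, because graded projective \(H\)-modules are graded injective.

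\emph{Setup.} Fix a witness
\[
0\to C_n\to\cdots\to C_0\to S_{N+1}\to0,\qquad n\leq m,
\]
with \((i+1)\text{-}\dell C_i\leq m-i\). Put \(K_0=S_{N+1}\) and \(K_{i+1}=\ker(C_i\to K_i)\), so \(K_n=C_n\) and \(K_{n+1}=0\). This gives short exact sequences
\[
0\to K_{i+1}\to C_i\to K_i\to 0\qquad(0\leq i\leq n).
\]
The witnessing condition says \(\Omega^{m-i}C_i\midst\Omega^{m-i+(i+1)}N_i=\Omega^{m+1}N_i\). By \cref{lem:stable-actual}, \(\Omega^{m-i}C_i\) is therefore an actual direct summand of \(\Omega^{m+1}N_i\oplus P\). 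In particular it is torsionless, and its image under \(F\) lies in \(\mathcal C_m\), since \(F\) commutes with finite direct sums. This holds also for \(i=m\), where \(C_m\) itself is a stable summand of an \((m+1)\)-st syzygy.

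\emph{Inductive step.} Put \(Z_i=\Omega^{m-i}K_i\) for \(0\leq i\leq n\) and \(Z_{n+1}=0\), so that \(Z_0=\Omega^mS_{N+1}\). I will prove by downward induction on \(i\) that
\[
F(Z_i)\in\operatorname{Filt}_{n+1-i}(\mathcal C_m).
\]
At \(i=n\) this is the observation above, since \(Z_n=\Omega^{m-n}C_n\). For \(i<n\), the horseshoe lemma yields an exact sequence
\[
0\to\Omega^{m-i}K_{i+1}\to\Omega^{m-i}C_i\oplus P\to Z_i\to0.
\]
Write \(\Omega^{m-i}K_{i+1}=\Omega Z_{i+1}\subseteq P''\), where \(P''\) is the projective cover of \(Z_{i+1}\), so that \(P''/\Omega Z_{i+1}\cong Z_{i+1}\). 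Push the sequence out along this inclusion. The pushout \(E\) sits in two short exact sequences:
\[
0\to P''\to E\to Z_i\to0,
\qquad
0\to\Omega^{m-i}C_i\oplus P\to E\to Z_{i+1}\to0.
\]

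\emph{Passing through \(F\).} I then check that all terms of both sequences are torsionless. For \(Z_i,Z_{i+1}\) this holds because they are syzygies of positive order, or, in the boundary case, \(K_m=C_m\) is a stable summand of a syzygy. The pushout \(E\) is torsionless because it is an extension of torsionless modules whose subterm is a projective, and hence a syzygy-type, module. This is the step I expect to need the most care: I would verify it by embedding \(E\) into \(P''\oplus P(Z_i)\) via the pushout description. Granting torsionlessness, \cref{lem:F-exact} makes both sequences exact after applying \(F\). In the first, \(F(P'')\) is a graded projective \(H\)-module by the computed values of \(F\) on projectives. Since \(H\) is graded Frobenius, \(F(P'')\) is graded injective, so the sequence splits:
\[
F(E)\cong F(P'')\oplus F(Z_i).
\]
Hence \(F(Z_i)\) is a direct summand of \(F(E)\). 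In the second sequence, \(F(E)\) has a submodule \(F(\Omega^{m-i}C_i\oplus P)\in\mathcal C_m\) with quotient \(F(Z_{i+1})\). By induction, \(F(Z_{i+1})\) is a summand of a module with a filtration of length at most \(n-i\) with factors in \(\mathcal C_m\).

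\emph{Conclusion.} To turn a quotient that is only a summand of a filtered module into a genuine filtration, I add a complementary summand to \(F(Z_{i+1})\) and take the corresponding direct sum extension, which is harmless. Concatenating the filtrations then gives \(F(Z_i)\in\operatorname{Filt}_{n+1-i}(\mathcal C_m)\). At \(i=0\) this yields \(F(\Omega^mS_{N+1})\in\operatorname{Filt}_{n+1}(\mathcal C_m)\subseteq\operatorname{Filt}_{m+1}(\mathcal C_m)\), which is the claim of the lemma.
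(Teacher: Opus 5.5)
\textbf{There is a genuine gap.} Your overall plan matches the paper's: rotate each piece $0\to K_{i+1}\to C_i\to K_i\to0$ into a sequence of torsionless modules whose subobject lands in $\mathcal C_m$ and whose quotient is $\Omega_B^{m-i-1}K_{i+1}$, apply $F$, and induct using a complementary summand. The failure is in the step that extracts $F(Z_i)$ as a direct summand of $F(E)$.

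\emph{Why the splitting argument fails.} You claim $F(P'')$ is graded projective ``by the computed values of $F$ on projectives.'' The values in \cref{lem:F-exact} do not support this.
\begin{itemize}
\item $F(P_{N+1})=V=\bigoplus_{i=1}^N S_i$, which is semisimple.
\item $F(P_i)=e_iT=I(i,N-i+1)$ for $i>N-\ell+1$, which is a shift of the truncation $H/(t^{N-i+1})$.
\end{itemize}
Since $\ell=4r+4\geq 8$, neither module is graded projective or graded injective over $H$. The projective cover $P''$ of an arbitrary $Z_{i+1}$ can contain such summands. For example, it contains $P_{N+1}$ whenever $Z_{i+1}$ has a summand $G$, and $P_{N-1}$ whenever $Z_{i+1}$ has a summand $S_{N-1}$. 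So graded self-injectivity of $H$ does not split $0\to F(P'')\to F(E)\to F(Z_i)\to0$. Without a splitting you only know that $F(Z_i)$ is a quotient of $F(E)$. But $\operatorname{Filt}_p(\mathcal C_m)$ is defined through direct summands, so the induction cannot proceed.

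\emph{A related problem.} Your general reason for torsionlessness of $E$ is also not valid. Over a non-self-injective algebra, an extension of a torsionless module by a projective module need not be torsionless.

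\emph{The repair.} Split the first sequence over $B$, before applying $F$. In the iterated horseshoe, $\Omega^{m-i}K_{i+1}$ sits inside $P''=P(Z_{i+1})$. The middle term $Y=\Omega^{m-i}C_i\oplus P$ sits inside $P''\oplus Q$, where $Q$ is the projective cover of $\Omega^{m-i-1}K_i$ and contains $Z_i$. These inclusions are compatible with the split sequence $0\to P''\to P''\oplus Q\to Q\to0$. The map $(y,p)\mapsto y+(p,0)$ then embeds the pushout $E$ into $P''\oplus Q$, with image exactly $P''\oplus Z_i$. Hence $E\cong P''\oplus Z_i$. This proves $E$ is torsionless, and additivity of $F$ alone makes $F(Z_i)$ a direct summand of $F(E)$.

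This is essentially what the paper does. It first rotates, using the projective cover of $C_i$, to get $0\to\Omega C_i\to\Omega K_i\oplus Q\to K_{i+1}\to0$. It then iterates the horseshoe, obtaining $0\to\Omega^{m-i}C_i\oplus P_i'\to\Omega^{m-i}K_i\oplus Q_i\to\Omega^{m-i-1}K_{i+1}\to0$. The middle term is visibly a direct sum, so no injectivity over $H$ is ever needed.

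With this correction, the rest of your argument goes through as written: the base case, the torsionlessness of the boundary term $C_m$, and the complementary-summand induction.
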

\begin{proof}
Choose a derived-delooping witness
\[
0\longrightarrow C_n\longrightarrow\cdots\longrightarrow C_0
\longrightarrow S_{N+1}\longrightarrow0,
\qquad n\leq m,
\]
with
\[
   (i+1)\text{-}\dell C_i\leq m-i.
\]
Put \(K_0=S_{N+1}\), write the witness as short exact sequences
\[
   0\longrightarrow K_{i+1}\longrightarrow C_i
   \longrightarrow K_i\longrightarrow0
   \qquad(0\leq i\leq n),
\]
where \(K_{n+1}=0\); in particular, \(K_n\simeq C_n\).  Set
\[
   A_i=F(\Omega_B^{m-i}C_i),
   \qquad
   W_i=F(\Omega_B^{m-i}K_i).
\]
The delooping condition implies that \(\Omega_B^{m-i}C_i\) is a stable summand of an \((m+1)\)-st syzygy: apply enough additional syzygies to a witnessing stable retraction.  By \cref{lem:stable-actual}, it is an actual summand after adding a projective module.  Hence
\[
   A_i\in\mathcal C_m
   \qquad(0\leq i\leq n).
\]
In particular, \(W_n=A_n\in\operatorname{Filt}_1(\mathcal C_m)\).

For \(0\leq i<n\), one has \(m-i\geq1\).  An iterated horseshoe argument, equivalently repeated syzygy rotation as in \cite[Lemma~2.13]{GuoIgusa}, gives projective modules \(P_i'\) and \(Q_i\) and an actual exact sequence
\[
0\longrightarrow\Omega_B^{m-i}C_i\oplus P_i'
\longrightarrow\Omega_B^{m-i}K_i\oplus Q_i
\longrightarrow\Omega_B^{m-i-1}K_{i+1}
\longrightarrow0.
\]
The projective correction on the left is harmless: set
\[
   A_i'=F(\Omega_B^{m-i}C_i\oplus P_i')
       =A_i\oplus F(P_i')\in\mathcal C_m.
\]
All three terms of the preceding exact sequence are torsionless.  The left and middle terms are positive syzygies up to projective summands.  The right term is also a positive syzygy unless \(i=m-1\); in that exceptional case \(i=n-1\), so it is \(K_n\simeq C_n\).  Since \(n=m\) and \((m+1)\text{-}\dell C_n=0\), the module \(C_n\) is a stable summand of a positive syzygy; hence \cref{lem:stable-actual,lem:dell-zero} show that it is torsionless.  Therefore \cref{lem:F-exact} yields
\[
0\longrightarrow A_i'
\longrightarrow W_i\oplus F(Q_i)
\longrightarrow W_{i+1}
\longrightarrow0.
\]
A descending induction on \(i\), starting with \(W_n\) and using these short exact sequences, shows that
\[
   W_i\in\operatorname{Filt}_{n-i+1}(\mathcal C_m).
\]
Indeed, suppose inductively that \(W_{i+1}\in\operatorname{Filt}_p(\mathcal C_m)\).  By definition, there is a graded module \(E'\) such that \(W_{i+1}\oplus E'\) admits a \(\mathcal C_m\)-filtration of length at most \(p\).  Taking the direct sum of the preceding short exact sequence with the identity sequence on \(E'\) gives
\[
0\longrightarrow A_i'
\longrightarrow W_i\oplus F(Q_i)\oplus E'
\longrightarrow W_{i+1}\oplus E'
\longrightarrow0.
\]
Pulling back a filtration of the quotient and adjoining the initial factor \(A_i'\in\mathcal C_m\) yields a \(\mathcal C_m\)-filtration of the middle term of length at most \(p+1\).  Since \(W_i\) is a direct summand of that middle term, \(W_i\in\operatorname{Filt}_{p+1}(\mathcal C_m)\).  Consequently,
\[
   F(\Omega_B^mS_{N+1})=W_0
   \in\operatorname{Filt}_{n+1}(\mathcal C_m)
   \subseteq\operatorname{Filt}_{m+1}(\mathcal C_m),
\]
as required.
\end{proof}

We now describe the objects in \(\mathcal C_m\).  For
\[
   c_s=\min\{\ell,N-s+1\},
\]
the syzygy of an interval is
\begin{equation}\label{eq:T-interval-syzygy}
   \Omega_TI(s,b)=
   \begin{cases}
      0,&b=c_s,\\
      I(s+b,c_s-b),&b<c_s.
   \end{cases}
\end{equation}
Since \(T\) is a truncated line Nakayama algebra, every indecomposable finitely generated right \(T\)-module is an interval \(I(s,b)\).  If the right boundary is not reached during the next two syzygies, \eqref{eq:T-interval-syzygy} gives
\[
   \Omega_T^2I(s,b)\cong I(s+\ell,b).
\]
Thus two syzygies preserve the length and shift the starting degree by \(\ell\).

By \cref{lem:first-syzygy-decomposition} and \eqref{eq:Br-G-iterates},
\begin{equation}\label{eq:F-high-syzygy}
 F(\Omega_B^{m+1}X)
 \simeq
 \left(\bigoplus_{j=0}^{m-1}\Omega_T^jV\right)^{\oplus a}
 \oplus\Omega_T^mX'.
\end{equation}
Decompose \(X'\) into intervals.  Since \(m=2r\), every surviving interval for which the right boundary is not reached starts, after \(m\) syzygies, in degree at least \(r\ell+1\).  If the boundary is reached, every nonzero nonprojective boundary interval starts in degree at least
\[
   N-\ell+2=r\ell+2.
\]
For a simple summand of \(V\), the successive nonprojective syzygies, before the boundary is reached, alternate between intervals of lengths \(\ell-1\) and \(1\).  Any boundary interval of an intermediate length again starts in degree at least \(r\ell+2\).  Together with the values of \(F\) on projectives, these observations prove the following.

\begin{lemma}\label{lem:Cm-types}
Every object of \(\mathcal C_m\) is a direct summand of a finite direct sum of graded intervals of the following four types:
\begin{enumerate}[label=\textup{(\roman*)}]
\item intervals of length \(1\);
\item intervals of length \(\ell-1\);
\item graded free \(H\)-modules;
\item intervals whose generating degree is at least \(r\ell+1\).
\end{enumerate}
\end{lemma}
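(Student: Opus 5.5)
Since \(\mathcal C_m\) is defined as \(\add\) of the generators \(F(P)\) and \(F(\Omega_B^{m+1}X)\), it is enough to show that each generator is a finite direct sum of graded intervals of types (i)--(iv). Every finitely generated graded \(H\)-module is a sum of graded intervals, because \(H\) is a graded truncated polynomial ring (and a \(T\)-module is already a sum of intervals \(I(s,b)\)). So the task reduces to identifying the intervals in each generator.

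\emph{Projectives.} By \cref{lem:F-exact}, \(F(P_{N+1})=V\), which is a sum of simples and hence of type (i), and \(F(P_i)=e_iT=I(i,c_i)\). If \(c_i=\ell\), then \(I(i,\ell)\) is a degree shift of \(H\), which is type (iii). Otherwise \(i\geq N-\ell+2=r\ell+2\), so the interval is of type (iv).

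\emph{High syzygies.} I would use \eqref{eq:F-high-syzygy}. I would first check that formula by iterating \cref{lem:first-syzygy-decomposition}: write \(\Omega X=G^{(a)}\oplus X'\), apply \(\Omega^m\), use \eqref{eq:Br-G-iterates} for the \(G\)-part, and use \(\Omega_B=\Omega_T\) on \(T\)-modules. Then I apply \(F\), using \(F(G)=0\) and \(F(Y)=Y\) when \(Ye_{N+1}=0\). This leaves two kinds of terms.

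First, the terms \(\Omega_T^jV\) with \(0\leq j\leq m-1\). Starting from \(S_i=I(i,1)\), \eqref{eq:T-interval-syzygy} gives \(I(i+1,c_{i+1}-1)\). This has length \(\ell-1\) (type (ii)) when \(c_{i+1}=\ell\). If instead \(c_{i+1}<\ell\), then \(i+1\geq r\ell+2\) and the interval is type (iv). Iterating, the lengths alternate \(1,\ell-1,1,\dots\) and the starting degrees only increase. So once the boundary is hit, every later nonzero interval starts at degree at least \(r\ell+2\), which is type (iv).

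Second, the term \(\Omega_T^mX'\) with \(m=2r\). For each interval summand \(I(s,b)\) of \(X'\) with \(b<\ell\), I iterate \(\Omega_T^2I(s,b)=I(s+\ell,b)\) while the boundary is not reached. After \(r\) double steps the start is at least \(1+r\ell\), so the interval is type (iv). If the boundary is reached earlier, the interval either becomes zero (projective) or starts at degree at least \(N-\ell+2\), again type (iv), because starting degrees are monotone under \(\Omega_T\). Any interval reaching length \(\ell\) along the way is projective and its syzygy vanishes.

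\emph{Main obstacle.} The delicate point is the boundary bookkeeping: showing that any nonzero interval produced after the right boundary has been touched starts in degree at least \(N-\ell+2=r\ell+2\). To prove it, I would use that a nonprojective \(I(s,b)\) with \(b<c_s\) has syzygy starting at \(s+b\). If \(c_s<\ell\), then already \(s>N-\ell+1\), and starting degrees never decrease. With this monotonicity the four types are exhausted, and closure under summands finishes the proof.
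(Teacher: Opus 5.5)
Your proposal is correct and follows the paper's own argument. You reduce to the generators \(F(P)\) and \(F(\Omega_B^{m+1}X)\) via \eqref{eq:F-high-syzygy}, track the \(V\)-terms alternating between lengths \(1\) and \(\ell-1\), shift non-boundary intervals of \(X'\) by \(\ell\) every two syzygies, and use monotonicity of starting degrees to place everything affected by the boundary in degree at least \(N-\ell+2=r\ell+2\). The only slip is harmless: \eqref{eq:T-interval-syzygy} gives \(\Omega_TS_i=I(i+1,c_i-1)\), not \(I(i+1,c_{i+1}-1)\); the two agree when \(c_{i+1}=\ell\), and when \(c_{i+1}<\ell\) the interval starts in degree \(i+1\geq r\ell+2\), so your classification is unaffected.
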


On the other hand,
\begin{equation}\label{eq:target-interval}
   Z_0:=\Omega_T^{m-1}U=I(s,h),
   \qquad
   s=1+(r-1)\ell+h.
\end{equation}
Its last nonzero degree is
\[
   s+h-1=r\ell.
\]
By the additivity of syzygies and \eqref{eq:Br-basic-syzygies},
\[
   \Omega_B^mS_{N+1}
   \simeq
   \Omega_B^{m-1}G
   \oplus\Omega_T^{m-1}U
   \oplus\Omega_T^{m-1}V.
\]
The functor \(F\) is additive and restricts to the identity on modules supported on the vertices of \(T\), by \cref{lem:F-exact}.  Hence
\[
   Z_0=\Omega_T^{m-1}U
\]
is a direct summand of \(F(\Omega_B^mS_{N+1})\).

We now record the elementary filtration argument needed below.  For convenience, a degree-zero homomorphism \(f:X\to Y\) is called a \(\mathcal C\)-ghost if
\[
   \underline{\Hom}_H(C,f)=0
   \qquad\text{for every }C\in\mathcal C.
\]
This terminology is adapted from the notion of a ghost in a triangulated category; see \cite{Christensen}.  Equivalently, for every \(g:C\to X\) with \(C\in\mathcal C\), the composite \(fg\) factors through a graded projective module.  Such ghosts form an ideal under composition.  The following is the corresponding statement for filtrations by short exact sequences.  Since this setting differs from the usual triangulated ghost lemma, we include a proof.

\begin{lemma}[Filtration lemma]\label{lem:ghost}
Let \(\mathcal C\) be a full additive subcategory of \(\operatorname{grmod}H\), closed under direct summands.  If
\[
   X\in\operatorname{Filt}_p(\mathcal C),
\]
then every composite of \(p\) \(\mathcal C\)-ghosts with source \(X\) factors through a finitely generated graded projective \(H\)-module.
\end{lemma}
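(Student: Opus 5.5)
I will prove the statement by induction on \(p\), following the standard ghost lemma but working with short exact sequences in \(\operatorname{grmod}H\). The key input is that \(H\) is graded Frobenius, so finitely generated graded projectives are also graded injective.

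\emph{Reduction to a filtered module.} Suppose \(X\) is a direct summand of a module \(E\) carrying a filtration of length \(q\leq p\) with factors in \(\mathcal C\). Let \(\iota:X\to E\) and \(\pi:E\to X\) be the inclusion and projection, and let \(f_p\cdots f_1\) be a composite of ghosts with source \(X\). Ghosts form an ideal, so \(f_1\pi\) is a ghost with source \(E\), and \(f_p\cdots f_1=(f_p\cdots f_1\pi)\iota\). Hence it suffices to treat \(E\) itself. Since \(q\leq p\), we may also discard the first \(p-q\) ghosts: if \(f_q\cdots f_1\) factors through a projective, then so does the longer composite. This reduces the lemma to the case \(X=E\) with a filtration of length exactly \(p\).

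\emph{Base case.} For \(p=1\), \(X\in\mathcal C\), and applying the ghost property of \(f_1\) to \(g=1_X\) gives that \(f_1\) factors through a projective.

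\emph{Inductive step.} Write \(0\to E_1\xrightarrow{j}E\xrightarrow{q}E/E_1\to0\), where \(E_1\in\mathcal C\) and \(E/E_1\) has a filtration of length \(p-1\). Then \(f_1j\) is a ghost applied to an object of \(\mathcal C\), so \(f_1j=ba\) with \(a:E_1\to P\), \(b:P\to Y_1\), and \(P\) finitely generated graded projective. Since \(P\) is graded injective, \(a\) extends along the monomorphism \(j\) to some \(\tilde a:E\to P\) with \(\tilde aj=a\). Then \((f_1-b\tilde a)j=0\), so \(f_1-b\tilde a\) factors through \(q\): we have \(f_1-b\tilde a=f_1'q\) for some \(f_1':E/E_1\to Y_1\).

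The map \(f_1'\) need not itself be a ghost. However, \(f_2f_1'\) is one: for \(g:C\to E/E_1\) with \(C\in\mathcal C\), we have \(f_2f_1'g\) equal to \(f_2\) composed with \(f_1'g\), and \(f_2\) is a ghost. The ideal property then gives that \(f_p\cdots f_3(f_2f_1')\) is a composite of \(p-1\) ghosts on \(E/E_1\). By induction it factors through a projective. Consequently
\[
f_p\cdots f_1=f_p\cdots f_2\,b\,\tilde a+f_p\cdots f_2f_1'q
\]
is a sum of two maps that each factor through projectives, and therefore it factors through a projective.

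\emph{Main obstacle.} The delicate point is the inductive step: using graded injectivity of projectives to push the null-homotopic part across \(j\). One must check that the extension \(\tilde a\) can be chosen of degree zero, which holds because graded-injectivity in \(\operatorname{grmod}H\) is taken with respect to degree-zero maps. One must also keep careful track of the fact that \(f_1'\) is not a ghost, only the product \(f_2f_1'\); this is why the induction consumes one ghost per filtration step.
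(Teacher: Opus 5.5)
Your proof is correct and is essentially the paper's argument: you extend the projective factorization of \(f_1j\) along \(j\) by graded injectivity, factor \(f_1-b\tilde a\) through the quotient, note that \(f_2f_1'\) is again a ghost, induct on the shorter filtration, and handle direct summands via the ideal property. The only cosmetic difference is that you first pass to a filtration of length exactly \(p\) by dropping the ghosts \(f_p,\ldots,f_{q+1}\), which are the ones applied last rather than first, whereas the paper inducts directly on filtrations of length at most \(p\).
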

\begin{proof}
We first treat a module \(E\) equipped with a filtration of length at most \(p\), and proceed by induction on \(p\).  For \(p=1\), the assertion follows directly from the definition of a \(\mathcal C\)-ghost.

Assume \(p>1\), and write the first filtration step as a short exact sequence
\[
   0\longrightarrow C\xrightarrow{u}E\xrightarrow{\pi}Y\longrightarrow0,
\]
where \(C\in\mathcal C\) and \(Y\) has a filtration of length at most \(p-1\).  Let
\[
   E\xrightarrow{f_1}X_1\xrightarrow{f_2}\cdots
   \xrightarrow{f_p}X_p
\]
be \(\mathcal C\)-ghosts.  Since \(f_1u\) is stably zero, there are maps
\[
   C\xrightarrow{a}P\xrightarrow{b}X_1
\]
with \(P\) graded projective and \(f_1u=ba\).  The module \(P\) is also graded injective, so \(a\) extends across \(u\) to a map \(\widetilde a:E\to P\).  Hence
\[
   f_1-b\widetilde a
\]
vanishes on \(C\), and therefore factors through \(\pi\), say
\[
   f_1-b\widetilde a=h\pi
\]
for some \(h:Y\to X_1\).  It follows that
\[
   f_p\cdots f_1
   =f_p\cdots f_2h\pi+
     (f_p\cdots f_2b)\widetilde a.
\]
The second summand factors through \(P\).  Moreover, \(f_2h\) is again a \(\mathcal C\)-ghost, because the ghosts form an ideal.  The induction hypothesis applied to \(Y\) shows that the first summand also factors through a graded projective module.  Thus the assertion holds for \(E\).

Finally, suppose that \(X\) is a direct summand of such an \(E\), with maps \(X\xrightarrow{i}E\xrightarrow{q}X\) satisfying \(qi=1_X\).  For any sequence of \(p\) ghosts starting at \(X\), the first map composed with \(q\) is a ghost starting at \(E\).  The preceding paragraph shows that the resulting composite factors through a graded projective; composing with \(i\) gives the original composite.  This proves the lemma.
\end{proof}

For \(0\leq j\leq m+1\), put
\[
   Z_j=I(s-j,h).
\]
For \(0\leq j\leq m\), let
\[
   f_j:Z_j\longrightarrow Z_{j+1}
\]
be the degree-zero map that sends a homogeneous generator \(z_j\) to \(z_{j+1}t\).

\begin{lemma}\label{lem:ghost-maps}
Each \(f_j\) is a \(\mathcal C_m\)-ghost, but
\[
   f_m\cdots f_0:Z_0\longrightarrow Z_{m+1}
\]
does not factor through a graded projective module.
\end{lemma}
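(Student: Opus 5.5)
The plan is to work entirely inside \(\operatorname{grmod}H\) and use only two facts: every object of \(\mathcal C_m\) decomposes into the interval types listed in \cref{lem:Cm-types}, and \(H\) is graded Frobenius. Both the ghost property and the non-factorization then reduce to bookkeeping of degrees and lengths of graded intervals.

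First I record the structure of \(f_j\). Write \(z_j\) for the generator of \(Z_j=I(s-j,h)\); it has degree \(s-j\), and \(Z_j\) occupies degrees \(s-j,\ldots,s-j+h-1\). The map \(f_j\) is \(z_ju\mapsto z_{j+1}tu\), so its image is \(Z_{j+1}t\) and it kills the socle \(z_jt^{h-1}\). Since \(\underline{\Hom}_H(-,f_j)\) is additive and \(\mathcal C_m\) is closed under summands, it suffices to test \(f_jg\) for \(g:C\to Z_j\) with \(C\) an indecomposable graded interval of one of the four types.
\begin{enumerate}[label=\textup{(\roman*)}]
\item Graded free \(C\): there is nothing to prove.
\item \(C\) generated in degree \(\geq r\ell+1\): a degree-zero map out of an interval is determined by the image of its generator, which lies in degree \(\geq r\ell+1\). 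However, \(Z_j\) is concentrated in degrees \(\leq s-j+h-1\leq s+h-1=r\ell\), so \(g=0\).
\item \(C\) simple: \(g\) lands in \(\soc Z_j\), which \(f_j\) kills, so \(f_jg=0\).
\item \(C=I(d,\ell-1)\) with generator \(c\): write \(g(c)=z_ju\), so that \(f_jg(c)=t\cdot(z_{j+1}u)\). The graded injective hull of \(C\) is the free module \(I(d-1,\ell)\), with generator \(c'\) and \(c=c't\). Define \(I(d-1,\ell)\to Z_{j+1}\) by \(c'\mapsto z_{j+1}u\); this is well defined because the source is free. Its restriction to \(C\) is \(f_jg\), so \(f_jg\) factors through a graded projective.
\end{enumerate}
Hence each \(f_j\) is a \(\mathcal C_m\)-ghost.

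Next I compute the composite. We have \(f_m\cdots f_0(z_0)=z_{m+1}t^{m+1}\), and since \(m+1=2r+1<h=2r+2\), this element is nonzero. In fact it spans \(\soc Z_{m+1}\), which sits in degree \(s-m-1+h-1=s\). So the composite is the nonzero map \(Z_0\to Z_{m+1}\) with image the simple socle.

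The remaining point, and the one I regard as the crux, is showing that this composite does not factor through a graded projective. Because \(H\) is graded self-injective, a map out of \(Z_0\) factors through a projective if and only if it extends along the inclusion of \(Z_0\) into its graded injective hull. Here \(\soc Z_0\) sits in degree \(s+h-1\), so the hull is the free interval \(I(s+h-\ell,\ell)=I(s-h,\ell)\), generated in degree \(s-h=s-2r-2\).

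Any degree-zero map \(I(s-h,\ell)\to Z_{m+1}\) sends the generator into degree \(s-2r-2\) of \(Z_{m+1}\). But \(Z_{m+1}\) begins in degree \(s-2r-1\), so that degree component is zero and every such map vanishes. Therefore only the zero map extends, the nonzero composite is not stably zero, and the lemma follows.

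The delicate part is keeping the degree conventions straight: the generator of the hull lies exactly one degree below the start of \(Z_{m+1}\). This is precisely where the choice \(h=m+2\) is used, and I would double-check it carefully.
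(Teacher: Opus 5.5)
Your proof is correct. The ghost half is the paper's own four-case check:
\begin{itemize}
\item length-one sources land in \(\soc Z_j\), which \(f_j\) kills;
\item a length-\((\ell-1)\) interval is pushed into the free module generated one degree lower;
\item free sources need nothing;
\item sources generated in degree \(\geq r\ell+1\) admit no nonzero maps to \(Z_j\).
\end{itemize}

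The non-factorization step takes a different route. The paper argues without the grading. Since \(\ell=2h\), every map \(H/(t^h)\to H\) lands in \(\operatorname{ann}_H(t^h)=t^hH\), and every map \(H\to H/(t^h)\) kills this ideal. So every composite of shifts of length-\(h\) intervals through a free module is zero, and non-factorization follows from \(t^{h-1}\neq0\) alone.

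You instead use graded self-injectivity to reduce to extending along \(Z_0\hookrightarrow I(s-h,\ell)\). You then count degrees: the hull's generator sits in degree \(s-h=s-m-2\), strictly below the first degree \(s-m-1\) of \(Z_{m+1}\).

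Your route isolates the inequality actually needed, \(\ell-h\geq m+2\), and it needs only that. The paper's argument is shorter and does not depend on the relative degree positions of \(Z_0\) and \(Z_{m+1}\), but it uses the stronger hypothesis \(\ell\geq 2h\). With the chosen parameters \(\ell=2h\) and \(h=m+2\), both arguments apply.
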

\begin{proof}
We check the four classes in \cref{lem:Cm-types}.  A map from a length-one interval to \(Z_j\) has image in the socle, so its composite with multiplication by \(t\) is zero.  Let \(C=I(p,\ell-1)\), and let \(g:C\to Z_j\) be a degree-zero map with
\[
   g(c)=\lambda z_jt^d
\]
on a homogeneous generator \(c\) of degree \(p\).  Degree preservation forces
\[
   d=p-(s-j).
\]
Then
\[
   f_jg(c)=\lambda z_{j+1}t^{d+1}
\]
factors through a graded free module: choose a free generator \(v\) of degree \(\deg c-1\) and factor as
\[
   c\longmapsto vt,
   \qquad
   v\longmapsto\lambda z_{j+1}t^d.
\]
The first map is well defined because \(t^\ell=0\).  For a graded free source the assertion is automatic, since the composite factors through that source.  Finally, an interval generated in degree at least \(r\ell+1\) admits no nonzero degree-zero map to \(Z_j\), whose support ends at degree at most \(r\ell\).  Since every object of \(\mathcal C_m\) is a direct summand of a finite direct sum of the four displayed types, each \(f_j\) is a \(\mathcal C_m\)-ghost.

The composite is multiplication by
\[
   t^{m+1}=t^{h-1},
\]
which is nonzero on a length-\(h\) interval.  It does not factor through a graded free module.  Indeed, since \(\ell=2h\), the image of every map \(H/(t^h)\to H\) lies in
\[
   \operatorname{ann}_H(t^h)=t^hH,
\]
and every subsequent map from \(H\) to \(H/(t^h)\) kills this ideal.  The same argument applies componentwise to every finite direct sum of graded free modules.  Hence the composite does not factor through a graded projective module.
\end{proof}

\begin{proposition}\label{prop:Br-lower}
One has \(\ddell_BS_{N+1}>2r\).
\end{proposition}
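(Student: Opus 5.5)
We argue by contradiction.  Suppose that \(\ddell_BS_{N+1}\leq2r=m\).  Then \cref{lem:ddell-to-extension} gives
\[
   F(\Omega_B^mS_{N+1})\in\operatorname{Filt}_{m+1}(\mathcal C_m).
\]
As observed before the filtration lemma, \(Z_0=\Omega_T^{m-1}U\) is a direct summand of \(F(\Omega_B^mS_{N+1})\).  Since \(\operatorname{Filt}_{m+1}(\mathcal C_m)\) is defined as a class of direct summands of filtered modules, it is closed under direct summands.  Hence \(Z_0\in\operatorname{Filt}_{m+1}(\mathcal C_m)\).

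Next, consider the chain of \(m+1\) maps
\[
   Z_0\xrightarrow{f_0}Z_1\xrightarrow{f_1}\cdots\xrightarrow{f_m}Z_{m+1}.
\]
By \cref{lem:ghost-maps}, each \(f_j\) is a \(\mathcal C_m\)-ghost.  The filtration lemma, \cref{lem:ghost}, applied with \(p=m+1\) and \(X=Z_0\), then shows that the composite \(f_m\cdots f_0\) factors through a finitely generated graded projective \(H\)-module.  This contradicts the second assertion of \cref{lem:ghost-maps}.  Therefore \(\ddell_BS_{N+1}>m=2r\).

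The only point requiring care is bookkeeping, because all of the substantive work has been done in the preceding lemmas.  Two things must be checked.  First, the number of ghosts in the chain, namely \(m+1\), must match the filtration length \(m+1\) given by \cref{lem:ddell-to-extension}.  Second, the degree bookkeeping in \cref{lem:ghost-maps} must be consistent with the location of \(Z_0\).  Specifically, the support of \(Z_0\) ends at degree \(r\ell\), and the targets \(Z_j\) are shifted downward, so their supports end at degrees at most \(r\ell\).  This is exactly what allows the type (iv) intervals of \cref{lem:Cm-types}, which are generated in degree at least \(r\ell+1\), to admit no nonzero maps into the \(Z_j\).  Finally, the starting degree \(s-j\) of each \(Z_j\) must remain positive for \(j\leq m+1\).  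This holds because \(s=1+(r-1)\ell+h\) is large compared with \(m+1=2r+1\), so every \(Z_j\) is a genuine graded interval.

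Combining this bound with \eqref{eq:Br-upper} will then give \(\ddell B_r=2r+1\).
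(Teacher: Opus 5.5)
Your proof is correct and is essentially the paper's argument verbatim. You assume \(\ddell_BS_{N+1}\leq 2r\), use \cref{lem:ddell-to-extension} to place the direct summand \(Z_0\) of \(F(\Omega_B^mS_{N+1})\) in \(\operatorname{Filt}_{m+1}(\mathcal C_m)\), and then reach a contradiction with \cref{lem:ghost-maps} by applying \cref{lem:ghost} to the \(m+1\) ghosts \(f_0,\ldots,f_m\). Your extra check that \(s-j>0\) is harmless but not needed, since \(\operatorname{grmod}H\) allows arbitrary integer degrees.
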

\begin{proof}
If \(\ddell_BS_{N+1}\leq m=2r\), then \cref{lem:ddell-to-extension} places \(F(\Omega_B^mS_{N+1})\), and hence its direct summand \(Z_0\), in \(\operatorname{Filt}_{m+1}(\mathcal C_m)\).  By \cref{lem:ghost}, every composite of \(m+1\) \(\mathcal C_m\)-ghosts from \(Z_0\) would factor through a graded projective module.  This contradicts \cref{lem:ghost-maps}.
\end{proof}

\begin{proof}[Proof of \cref{thm:unbounded-gap}]
By \cref{prop:Br-Findim}, \(\Findim(B_r^{\op})=1\).

\Cref{prop:Br-lower}, together with \eqref{eq:Br-upper}, gives
\[
   2r<\ddell_{B_r}S_{N+1}
   \leq\ddell B_r\leq2r+1.
\]
Since \(\ddell B_r\) is integer-valued, \(\ddell B_r=2r+1\).  Therefore
\[
   \ddell B_r-\Findim(B_r^{\op})=2r,
\]
which is unbounded as \(r\to\infty\).
\end{proof}

\begin{corollary}\label{cor:Question51}
The equality
\[
   \ddell A=\Findim(A^{\op})
\]
fails in general, and the difference between its two sides is unbounded, even among finite-dimensional monomial algebras.  Equivalently, the quantities
\[
   \ddell(\Gamma^{\op})-\Findim\Gamma
\]
are unbounded among finite-dimensional monomial algebras \(\Gamma\).
\end{corollary}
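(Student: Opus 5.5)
This corollary should follow directly from \cref{thm:unbounded-gap}, with \cref{thm:small-counterexample} as a first instance. The plan has three steps.

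\emph{Failure of equality.} Take \(A=B_1\), or the eleven-dimensional algebra \(\Lambda\) of \cref{thm:small-counterexample}. For \(B_1\), \cref{thm:unbounded-gap} gives \(\ddell B_1=3\) and \(\Findim(B_1^{\op})=1\), so the equality \(\ddell A=\Findim(A^{\op})\) fails for a finite-dimensional monomial algebra.

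\emph{Unboundedness.} For each \(r\geq1\), \cref{thm:unbounded-gap} gives
\[
   \ddell B_r-\Findim(B_r^{\op})=2r .
\]
The relations \eqref{eq:Br-relations} show that each \(B_r\) is a finite-dimensional monomial algebra. Hence, for any bound \(c\), choosing \(r>c/2\) produces a monomial algebra whose difference exceeds \(c\). So the supremum over finite-dimensional monomial algebras is infinite. The differences are also nonnegative, by \eqref{eq:basic-bounds}.

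\emph{Equivalent reformulation.} Put \(\Gamma=B_r^{\op}\). Then \(\Gamma^{\op}=B_r\), so
\[
   \ddell(\Gamma^{\op})-\Findim\Gamma=\ddell B_r-\Findim(B_r^{\op})=2r .
\]
The only thing to check is that \(\Gamma\) is again monomial. The opposite of \(KQ/I\) with \(I\) generated by paths is \(KQ^{\op}/I^{\op}\), and \(I^{\op}\) is generated by the reversed paths, so the opposite of a monomial algebra is monomial. Conversely, every monomial \(\Gamma\) arises this way, so the two families of differences coincide as \(A\), respectively \(\Gamma\), ranges over monomial algebras.

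There is no real obstacle here: all the substance is in \cref{thm:unbounded-gap}. The only point needing care is the closure of monomiality under passing to the opposite algebra, which is the path-reversal remark above.
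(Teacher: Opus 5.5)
Your proposal is correct and follows the paper's own proof. The paper likewise takes \(A=B_r\) in \cref{thm:unbounded-gap} and lets \(r\) vary, then sets \(\Gamma=B_r^{\op}\), using that the opposite of a finite-dimensional monomial algebra is again monomial.
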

\begin{proof}
The first assertion follows by taking \(A=B_r\) in \cref{thm:unbounded-gap} and letting \(r\) vary.  For the equivalent formulation, take \(\Gamma=B_r^{\op}\); the opposite of a finite-dimensional monomial algebra is again a finite-dimensional monomial algebra.
\end{proof}

\bigskip
\paragraph*{Acknowledgements.}
The new examples constructed in this paper were developed with the assistance of ChatGPT.

\end{document}